\documentclass[11pt]{article}
\usepackage[a4paper,margin=1in]{geometry}
\usepackage[T1]{fontenc}
\usepackage[utf8]{inputenc}
\usepackage{lmodern}
\usepackage{amsmath,amssymb,amsthm,mathtools,mathrsfs,bm}
\usepackage{enumitem}
\usepackage{booktabs}
\usepackage{array}
\usepackage{graphicx}
\usepackage{hyperref}
\usepackage[nameinlink,capitalize]{cleveref}

\hypersetup{colorlinks=true,linkcolor=blue,citecolor=blue,urlcolor=blue}

\newtheorem{theorem}{Theorem}[section]
\newtheorem{proposition}[theorem]{Proposition}
\newtheorem{lemma}[theorem]{Lemma}
\newtheorem{corollary}[theorem]{Corollary}
\theoremstyle{definition}
\newtheorem{definition}[theorem]{Definition}
\theoremstyle{remark}
\newtheorem{remark}[theorem]{Remark}

\newcommand{\R}{\mathbb{R}}

\newcommand{\Pcal}{\mathbb{P}}

\newcommand{\dd}{\,\mathrm{d}}
\newcommand{\ip}[2]{\left\langle #1,#2\right\rangle}
\newcommand{\norm}[1]{\left\lVert #1\right\rVert}
\newcommand{\abs}[1]{\left\lvert #1\right\rvert}
\newcommand{\diver}{\operatorname{div}}

\newcommand{\Dom}{\operatorname{Dom}}

\title{Nonlocal Hyperdissipative Perturbations of the Three-Dimensional Navier--Stokes System}
\title{Nonlocal Hyperdissipative Perturbations of the Three-Dimensional Navier--Stokes System}

\author{
	Veli Shahmurov$^{1,2}$ \and
	Rishad Shahmurov$^{3,*}$
}
\date{}

\begin{document}
\maketitle
\begin{center}
		$^{1}$Antalya Bilim University, Department of Industrial Engineering, Antalya 07190, Turkey\\
	$^{2}$Azerbaijan State Economic University, Baku AZ1001, Azerbaijan\\[0.5em]
	$^{3}$Cellular Products Research and Development, 595 East Crossville Rd., Roswell, GA 30075, USA\\

	$^{*}$Corresponding author: \texttt{rshahmurov@crimson.ua.edu}\\
	 coauthor: \texttt{veli.sahmurov@antalya.edu.tr}, \texttt{veli.sahmurov@gmail.com}
\end{center}
\begin{abstract}
We study the three-dimensional incompressible Navier--Stokes system on $\R^3$ with an additional dissipative nonlocal term
\[
\partial_t u + (u\cdot\nabla)u + \nabla p = \nu \Delta u + Lu,
\qquad \diver u = 0,
\]
where $L$ is a selfadjoint Fourier multiplier whose symbol is comparable to $-|\xi|^{2\alpha}$ for some $\alpha>1$. We first identify a sharp Fourier-symbol criterion distinguishing lower-order convolution perturbations from genuinely regularizing nonlocal corrections. In the resulting hyperdissipative class we prove the exact $L^2$ energy identity, global weak solvability for every $\alpha>1$, and local strong well-posedness in $H^s(\R^3)$ for $s>\frac52$. We then show that the Lions exponent $\alpha=\frac54$ remains the critical energy-growth threshold in this nonlocal setting: if $\alpha\ge \frac54$, every $H^s$ solution is global, while for every $\alpha>1$ one has global strong solvability for sufficiently small $H^s$ data. Finally, for the vanishing-hyperdissipation approximation of the classical three-dimensional Navier--Stokes equations, we prove a near-singular divergence principle: if the classical flow blows up at a first singular time $T_*$ in a continuation norm $X$, then the corresponding regularized family cannot remain uniformly bounded in $X$ on any interval approaching $T_*$. This identifies the precise point at which the fixed-parameter global theory degenerates in the Navier--Stokes limit.
\end{abstract}

\paragraph{Keywords.} Navier--Stokes equations, hyperdissipation, fractional viscosity, nonlocal diffusion, Lions exponent, global regularity.

\paragraph{MSC 2020.} 35Q30, 35Q35, 35B65, 76D03, 76F65, 35S05.

\section{Introduction}

The incompressible Navier--Stokes equations
\begin{equation}\label{eq:NS-classical}
\partial_t u + (u\cdot\nabla)u + \nabla p = \nu \Delta u,
\qquad \diver u = 0,
\qquad u|_{t=0}=u_0,
\end{equation}
are a central model of mathematical fluid mechanics. In three dimensions the global regularity problem for smooth finite-energy data remains open; see the Millennium problem statement of Fefferman and the overview of the Clay Mathematics Institute \cite{Fefferman2006,ClayNS}. The foundational theory includes Leray's global weak solutions \cite{Leray1934}, Hopf's refinement \cite{Hopf1951}, the strong-solution framework of Fujita--Kato and Kato \cite{FujitaKato1964,Kato1984}, and the partial regularity theorem of Caffarelli, Kohn, and Nirenberg \cite{CaffarelliKohnNirenberg1982}. The large-data smooth solvability problem in $\R^3$ nevertheless remains unresolved.

A standard way to change the regularity balance while preserving incompressibility is to strengthen dissipation. The model example is the hyperdissipative system
\begin{equation}\label{eq:NS-hd}
\partial_t u + (u\cdot\nabla)u + \nabla p = -\mu(-\Delta)^\alpha u,
\qquad \diver u = 0,
\end{equation}
with $\alpha>1$. Scaling identifies $\alpha=\frac54$ as the energy-critical threshold. Lions proved global regularity in the critical and subcritical regimes \cite{Lions1969}, and Tao showed that one may pass slightly below the pure power threshold by a logarithmic refinement of the dissipation symbol \cite{Tao2009}. More recent work has sharpened the borderline picture from several directions; see, for example, \cite{GrujicXu2024,GrujicXu2025,LiQuZengZhang2024,ChenJiangYuan2025}.

The purpose of this paper is to treat a broader but still rigid class of nonlocal perturbations of the three-dimensional Navier--Stokes equations,
\begin{equation}\label{eq:modified-NS-intro}
\partial_t u + (u\cdot\nabla)u + \nabla p = \nu \Delta u + Lu,
\qquad \diver u = 0,
\end{equation}
and to determine exactly when the added term $L$ is merely lower order and when it changes the large-data theory. Our point of view is deliberately close to the classical setup: the spatial domain is $\R^3$, the velocity field is divergence free, the data have finite energy, and the only modification is the extra linear operator.

The decisive object is the Fourier symbol of $L$. If $L$ is given by convolution with an $L^1$ kernel, its symbol is bounded and the perturbation is order zero. If $L$ is a first-order convolution term, it is at most transport-like at the energy level. By contrast, if $L$ acts on second derivatives through a kernel whose Fourier transform grows like $|\xi|^{2\alpha-2}$ with $\alpha>1$, then the effective symbol behaves like $-|\xi|^{2\alpha}$ and the equation enters a genuinely hyperdissipative regime. The first main result of the paper is a precise Fourier-symbol criterion separating these cases.

We work in the Hilbert space $L^2_\sigma(\R^3)$ of finite-energy solenoidal fields and assume that $L$ is realized by a selfadjoint dissipative Fourier multiplier. This setting is sufficiently flexible to include fractional Laplacians and convolution-type nonlocal viscosity operators, while remaining close to the classical energy method for three-dimensional incompressible flow. The analysis is then driven by three questions.

\begin{enumerate}[label=(\roman*)]
\item Which symbol classes produce a coercive high-frequency damping comparable to $|\xi|^{2\alpha}$?
\item What part of the classical Lions theory survives for this class of nonlocal perturbations?
\item What can be said about the vanishing-hyperdissipation limit back to the classical three-dimensional Navier--Stokes equations?
\end{enumerate}

The main theorem answers the first two questions in a sharp form. For symbols comparable to $|\xi|^{2\alpha}$ with $\alpha>1$, we prove the exact $L^2$ energy identity, construct global weak solutions for arbitrary finite-energy data, obtain local strong well-posedness in $H^s(\R^3)$ for $s>\frac52$, prove global strong solvability for arbitrary data when $\alpha\ge\frac54$, and prove global small-data strong solvability for all $\alpha>1$. Thus the classical Lions threshold remains the critical energy-growth threshold in the nonlocal class treated here.

The vanishing-hyperdissipation problem is more delicate. For each fixed regularization parameter one is in the globally regular hyperdissipative regime, but the global estimates degenerate as the parameter tends to zero. We make this loss of uniformity precise and prove a rigid near-singular divergence principle: if the classical Navier--Stokes solution develops a first singularity at time $T_*$ in some continuation norm $X$, then the regularized approximants cannot remain uniformly bounded in $X$ on any interval approaching $T_*$. This result does not resolve the large-data problem for the classical equation, but it identifies the exact analytic point at which the fixed-parameter theory ceases to be uniform.

The mathematical regimes are summarized in \cref{tab:regimes}. The interval $1<\alpha<\frac54$ remains energy-supercritical; $\alpha=\frac54$ is critical; and $\alpha>\frac54$ is subcritical.

\begin{table}[ht]
\centering
\begin{tabular}{>{\raggedright\arraybackslash}p{2.2cm} >{\raggedright\arraybackslash}p{3.2cm} >{\raggedright\arraybackslash}p{3.8cm} >{\raggedright\arraybackslash}p{4.2cm}}
\toprule
Range of $\alpha$ & Energy scaling & Rigorous status in this paper & Interpretation \\
\midrule
$\alpha=1$ & supercritical & classical 3D Navier--Stokes difficulty remains & Laplacian-order dissipation only \\
$1<\alpha<\frac54$ & energy supercritical & global weak solutions, local strong solutions, global small-data strong solutions & extra damping helps but does not eliminate supercriticality \\
$\alpha=\frac54$ & energy critical & global strong solvability for smooth data & Lions threshold \\
$\alpha>\frac54$ & energy subcritical & global strong solvability for smooth data & genuinely stronger-than-Laplacian smoothing \\
\bottomrule
\end{tabular}
\caption{Regimes for the nonlocal perturbation with symbol comparable to $-|\xi|^{2\alpha}$.}
\label{tab:regimes}
\end{table}

The paper is organized as follows. In \cref{sec:model} we formulate the problem and state the main results. \Cref{sec:criterion} contains the Fourier-symbol criterion, the scaling analysis, and the exact energy identity. Global weak solutions are constructed in \cref{sec:weak}. \Cref{sec:local} proves local strong well-posedness, and \cref{sec:global} establishes the global strong theory at and above the Lions exponent together with the global small-data result. Finally, \cref{sec:limit,sec:applications} treat vanishing hyperdissipation, Galerkin truncation, and the relation to nonlocal constitutive laws and hyperviscous models.

\section{Model, assumptions, and main results}\label{sec:model}

Let
\[
L^2_\sigma(\R^3) := \overline{\{ \varphi\in C_c^\infty(\R^3;\R^3): \diver \varphi =0\}}^{\,L^2(\R^3)}
\]
denote the usual Hilbert space of solenoidal finite-energy vector fields. For $s\in\R$ we write $H^s_\sigma(\R^3)=H^s(\R^3;\R^3)\cap L^2_\sigma(\R^3)$ and set
\[
\Lambda^s := (-\Delta)^{s/2},
\qquad
\norm{u}_{H^s}:=\norm{\Lambda^s u}_{L^2}.
\]
The Leray projector onto divergence-free fields is denoted by $\Pcal$.

We study the Cauchy problem
\begin{equation}\label{eq:main}
\left\{
\begin{aligned}
&\partial_t u + \Pcal \diver (u\otimes u) + \nu(-\Delta)u + M u = 0,
\\
&\diver u = 0,
\\
&u|_{t=0}=u_0,
\end{aligned}
\right.
\end{equation}
where $\nu>0$ and $M$ is the positive Fourier multiplier associated with the symbol $m(\xi)$:
\[
\widehat{M u}(\xi)=m(\xi)\hat u(\xi),
\qquad
\widehat{L u}(\xi)=-m(\xi)\hat u(\xi).
\]
Thus \eqref{eq:main} is exactly the modified equation
\[
\partial_t u + (u\cdot\nabla)u + \nabla p = \nu \Delta u + L u,
\qquad \diver u =0,
\]
written in projected form.

The standing assumption on the symbol is the following.

\begin{definition}[Hyperdissipative symbol of order $2\alpha$]\label{def:symbol}
Fix $\alpha>1$. We say that $m$ belongs to the class $\mathfrak M_\alpha$ if
\begin{enumerate}[label=(\alph*)]
\item $m:\R^3\to[0,\infty)$ is measurable and even;
\item there exist constants $0<c_0\le c_1$ such that
\begin{equation}\label{eq:symbol-growth}
c_0 |\xi|^{2\alpha} \le m(\xi) \le c_1 (1+|\xi|^{2\alpha}),
\qquad \xi\in\R^3.
\end{equation}
\end{enumerate}
\end{definition}

The lower bound in \eqref{eq:symbol-growth} is the key coercive input. It implies
\begin{equation}\label{eq:coercive-equivalence}
c_0 \norm{\Lambda^\alpha u}_{L^2}^2
\le
\ip{M u}{u}_{L^2}
=
\int_{\R^3} m(\xi)\abs{\hat u(\xi)}^2 \dd \xi
\le
c_1 \bigl(\norm{u}_{L^2}^2 + \norm{\Lambda^\alpha u}_{L^2}^2\bigr).
\end{equation}
The upper bound controls the symbol at large frequencies and allows us to compare $M$ with the fractional Laplacian.

\begin{remark}[Hilbert-space operator variant]
The paper is written for the scalar symbol $m(\xi)$ in order to stay as close as possible to the physical three-dimensional Navier--Stokes system. The same energy method extends to matrix- or operator-valued multipliers on a Hilbert space, provided the symbol is selfadjoint and uniformly positive on the solenoidal subspace. In that more abstract form $L$ acts on a Hilbert space of states, but the core issue is unchanged: what matters is the coercive growth rate of the symbol at high frequency.
\end{remark}

\begin{remark}
The simplest example is $m(\xi)=\mu |\xi|^{2\alpha}$, in which case $L=-\mu(-\Delta)^\alpha$. A more geometric class is obtained from nonlocal viscosity kernels:
\[
L u = \sum_{j=1}^3 c_j * \partial_{x_jx_j} u,
\qquad
\widehat{L u}(\xi)= - \sum_{j=1}^3 \xi_j^2 \hat c_j(\xi) \hat u(\xi).
\]
If $\sum_j \xi_j^2 \Re \hat c_j(\xi)\gtrsim |\xi|^{2\alpha}$, then $m(\xi)=\sum_j \xi_j^2 \Re \hat c_j(\xi)$ belongs to $\mathfrak M_\alpha$.
\end{remark}

We shall use the following standard solution concepts.

\begin{definition}[Weak solution]
Let $u_0\in L^2_\sigma(\R^3)$. A function
\[
u\in L^\infty_{\mathrm{loc}}([0,\infty);L^2_\sigma(\R^3))
\cap
L^2_{\mathrm{loc}}([0,\infty);H^1_\sigma(\R^3))
\cap
L^2_{\mathrm{loc}}([0,\infty);H^\alpha_\sigma(\R^3))
\]
is called a weak solution of \eqref{eq:main} if $u(0)=u_0$ in $L^2$, if $u$ satisfies \eqref{eq:main} in the distributional sense, and if the energy inequality of \cref{thm:weak} holds.
\end{definition}

\begin{definition}[Strong solution]
Let $s>\frac52$. A strong solution of \eqref{eq:main} on $[0,T]$ is a function
\[
u\in C([0,T];H^s_\sigma(\R^3))
\cap
L^2(0,T;H^{s+\alpha}_\sigma(\R^3))
\]
which satisfies \eqref{eq:main} distributionally and whose nonlinear term belongs to $L^2(0,T;H^{s-\alpha}(\R^3))$.
\end{definition}

Our first theorem records the main assertions.

\begin{theorem}[Main results]\label{thm:main}
Assume that $m\in\mathfrak M_\alpha$ for some $\alpha>1$.
\begin{enumerate}[label=(\roman*)]
\item For smooth solutions the exact energy identity
\[
\frac12 \frac{\dd}{\dd t}\norm{u(t)}_{L^2}^2
+
\nu \norm{\nabla u(t)}_{L^2}^2
+
\int_{\R^3} m(\xi)\abs{\hat u(\xi,t)}^2 \dd \xi
=0
\]
holds for every $t>0$.
\item For every $u_0\in L^2_\sigma(\R^3)$ there exists a global weak solution of \eqref{eq:main}.
\item If $s>\frac52$ and $u_0\in H^s_\sigma(\R^3)$, then there exists $T=T(\norm{u_0}_{H^s})>0$ and a unique strong solution on $[0,T]$.
\item If $\alpha\ge \frac54$ and $u_0\in H^s_\sigma(\R^3)$ with $s>\frac52$, then the strong solution is global.
\item If $\alpha>1$ and $u_0\in H^s_\sigma(\R^3)$ with $s>\frac52$ is sufficiently small, then the strong solution is global.
\end{enumerate}
\end{theorem}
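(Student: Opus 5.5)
We prove the five assertions in order; each builds on the previous one.

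\emph{(i) Energy identity.} We pair \eqref{eq:main} with $u$ in $L^2$. Since $\Pcal$ is selfadjoint and $\Pcal u=u$, the nonlinear term contributes $\ip{\diver(u\otimes u)}{u}=\int (u\cdot\nabla)u\cdot u\dd x=-\frac12\int(\diver u)\abs{u}^2\dd x=0$. The viscous term gives $\nu\norm{\nabla u}_{L^2}^2$. By Plancherel, $\ip{Mu}{u}=\int m\abs{\hat u}^2\dd\xi$; this is real because $m$ is real and even, and $u$ is real so that $\hat u(-\xi)=\overline{\hat u(\xi)}$. For smooth decaying solutions every integration by parts is justified.

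\emph{(ii) Global weak solutions.} We use the Galerkin/Friedrichs scheme with $J_n$ the Fourier truncation to $\{\abs{\xi}\le n\}$, and solve
\[
\partial_t u_n+J_n\Pcal\diver(u_n\otimes u_n)+\nu(-\Delta)u_n+Mu_n=0,\qquad u_n(0)=J_nu_0 .
\]
This is an ODE on the band-limited solenoidal space and is locally Lipschitz. The argument of (i) applies verbatim, since $J_n$ commutes with $\Pcal$ and $J_nu_n=u_n$. Combined with \eqref{eq:coercive-equivalence}, it gives global existence and uniform bounds in $L^\infty L^2\cap L^2H^1\cap L^2H^\alpha$. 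For compactness we bound $\partial_tu_n$ uniformly in $L^{4/3}_{\mathrm{loc}}H^{-N}$ for $N$ large: the nonlinear term is controlled by the $L^2H^1$ bound, and $Mu_n\in L^2H^{-\alpha}$ by the upper bound on $m$. Aubin--Lions on balls, plus a diagonal argument, then gives strong $L^2_{\mathrm{loc}}$ convergence, which suffices to pass to the limit in $u_n\otimes u_n$. The linear terms pass to the limit weakly. The energy inequality follows from weak lower semicontinuity. For the $M$-term we use Fatou in $\xi$, since $\int_0^t\int m\abs{\hat u}^2$ is a weakly lower semicontinuous convex functional.

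\emph{(iii) Local strong well-posedness.} We take the $H^s$ inner product of the Galerkin system with $\Lambda^{2s}u_n$. Write $\ip{\Lambda^s\diver(u\otimes u)}{\Lambda^su}=\ip{\Lambda^s((u\cdot\nabla)u)-(u\cdot\nabla)\Lambda^su}{\Lambda^su}$; the subtracted transport term vanishes by $\diver u=0$. The Kato--Ponce commutator estimate bounds this by $C\norm{\nabla u}_{L^\infty}\norm{u}_{H^s}^2$, and Sobolev gives $\norm{\nabla u}_{L^\infty}\lesssim\norm{u}_{H^s}$ for $s>\frac52$. We obtain
\[
\frac{\dd}{\dd t}\norm{u}_{H^s}^2+2c_0\norm{\Lambda^{s+\alpha}u}_{L^2}^2\le C\norm{u}_{H^s}^3 ,
\]
hence a uniform bound on $[0,T]$ with $T\sim(C\norm{u_0}_{H^s})^{-1}$, together with $L^2H^{s+\alpha}$ control (inhomogeneous norms are handled by adding (i)). Passing to the limit, with time continuity from the energy equality, gives existence. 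Uniqueness follows from an $L^2$ estimate for the difference $w$: $\frac{\dd}{\dd t}\norm{w}^2\le 2\norm{\nabla u}_{L^\infty}\norm{w}^2$, then Gronwall. The claim that the nonlinear term lies in $L^2H^{s-\alpha}$ follows since it lies even in $L^\infty H^{s-1}$ and $\alpha>1$.

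\emph{(iv) Global regularity for $\alpha\ge\frac54$, the main obstacle.} By (iii), the standard continuation criterion is that $\int_0^T\norm{\nabla u}_{L^\infty}\dd t<\infty$ rules out blow-up (Beale--Kato--Majda type, from the differential inequality above). It therefore suffices to get an a priori $H^1$ bound. Testing with $-\Delta u$ gives
\[
\frac12\frac{\dd}{\dd t}\norm{\nabla u}^2+c_0\norm{\Lambda^{1+\alpha}u}^2\le\abs{\ip{(u\cdot\nabla)u}{\Delta u}}\lesssim\norm{u}_{L^2}^{a}\norm{\Lambda^{1+\alpha}u}_{L^2}^{b}\norm{\nabla u}_{L^2}^{c} .
\]
The exponents come from Gagliardo--Nirenberg interpolation between $L^2$, $\dot H^1$ and $\dot H^{1+\alpha}$; by scaling, $b<2$ and $b+c\le 2+\ldots$ exactly when $\alpha\ge\frac54$. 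For $\alpha>\frac54$, Young's inequality absorbs the dissipation and leaves $\frac{\dd}{\dd t}y\le C(\norm{u_0})y^{\theta}$ with $y=\norm{\nabla u}^2$ and a factor integrable through $\norm{\Lambda^\alpha u}\in L^2_t$, so Gronwall closes the argument. At $\alpha=\frac54$ the estimate is exactly critical, $\lesssim\norm{\Lambda^{5/4}u}\,\norm{\nabla u}\,\norm{\Lambda^{9/4}u}$ up to interpolation, and absorption yields $y'\le C\norm{\Lambda^{5/4}u}^2y$. This closes by Gronwall using the energy bound $\int_0^\infty\norm{\Lambda^{5/4}u}^2\le\norm{u_0}^2/(2c_0)$. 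Getting these exponents right is the delicate step; higher $H^s$ norms then follow, since the $H^1$ bound and parabolic smoothing control $\norm{\nabla u}_{L^\infty}$ in $L^1_t$.

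\emph{(v) Small data.} We use the $\dot H^1$ inequality with $\nu$ alone: $\abs{\ip{(u\cdot\nabla)u}{\Delta u}}\lesssim\norm{\nabla u}^{3/2}\norm{\Delta u}^{3/2}\lesssim\norm{\nabla u}\norm{\Delta u}^2$ after using $\norm{\nabla u}^2\le\norm{u}\norm{\Delta u}$; alternatively we use the scaling-invariant quantity $\norm{u}\norm{\nabla u}$. Either way, if $\norm{u_0}_{L^2}\norm{\nabla u_0}_{L^2}$ is small, a continuity argument keeps $\frac{\dd}{\dd t}\norm{\nabla u}^2\le0$ for all time. The $H^s$ norm controls this quantity, so smallness in $H^s$ gives a global $H^1$ bound, and global continuation follows as in (iv).
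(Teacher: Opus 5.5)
Your proposal is correct. Parts (i) and (ii) match the paper: pairing with $u$, then Galerkin, Aubin--Lions on balls and lower semicontinuity, with Fourier truncation in place of an abstract basis. Parts (iii)--(v) take genuinely different routes.

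\emph{Part (iii).} The paper runs a Banach fixed point in $L^\infty_tH^s\cap L^2_tH^{s+\alpha}$, combining \cref{lem:linear} with \eqref{eq:bilinear-local}. The parabolic gain of the linear estimate pays for the derivative lost in the nonlinearity, so no commutator is needed and continuous dependence comes for free. Your Kato--Ponce energy method does not use $M$ to close; it is the Euler/Navier--Stokes $H^s$ theory. It needs a compactness passage and a separate $L^2$ uniqueness argument, but it delivers the Beale--Kato--Majda-type criterion you use afterwards.

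\emph{Part (iv).} The paper never passes through $H^1$. The threshold estimate \eqref{eq:critical-product-diagonal} hinges on $s+1-\alpha\le s+\alpha-\frac32$. It gives $\frac{\dd}{\dd t}\norm{u}_{H^s}^2\lesssim(1+\norm{u}_{H^\alpha}^2)\norm{u}_{H^s}^2$ directly, closed by the energy bound on $\int_0^T\norm{u}_{H^\alpha}^2\dd t$. Your Lions-style $\dot H^1$ estimate encodes the same threshold using only H\"older and Sobolev. It even yields a time-uniform $H^1$ bound, at the cost of a bootstrap to $H^s$.

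\emph{Part (v).} The paper contracts on the half-line using the hyperdissipation. You use the classical criterion that $\norm{u_0}_{L^2}\norm{\nabla u_0}_{L^2}$ be small, which needs only $\nu\Delta$. Here your route is the more robust one. The half-line contraction must control low frequencies, where $\nu|\xi|^2+m(\xi)\to0$ gives no bound on $\int_0^\infty\norm{u}_{L^2}^2\dd t$. Such a bound is implicit in the inhomogeneous $L^2(0,\infty;H^{s+\alpha})$ norm that makes \eqref{eq:bilinear} true. You obtain global existence, which is all (v) asserts, rather than a global-in-time bound in a contraction space.

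Three local repairs are needed.

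\emph{First repair, in (v).} The bound $\norm{\nabla u}^{3/2}\norm{\Delta u}^{3/2}\lesssim\norm{\nabla u}\norm{\Delta u}^2$ is false, since it is not scale-invariant. What $\norm{\nabla u}^2\le\norm{u}\norm{\Delta u}$ actually gives is $\norm{\nabla u}^{3/2}\norm{\Delta u}^{3/2}\le(\norm{u}\norm{\nabla u})^{1/2}\norm{\Delta u}^2$. Together with $\norm{u(t)}\le\norm{u_0}$, this is exactly why $\norm{u_0}\norm{\nabla u_0}\ll\nu^2$ suffices.

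\emph{Second repair, in (iv).} Replace the vague exponent count by $\abs{\ip{(u\cdot\nabla)u}{\Delta u}}\le\norm{u}_{L^{12}}\norm{\nabla u}_{L^2}\norm{\Delta u}_{L^{12/5}}\lesssim\norm{\Lambda^{5/4}u}\norm{\nabla u}\norm{\Lambda^{9/4}u}$. Since $2\le\frac94\le1+\alpha$, the quantity $\norm{\Lambda^{9/4}u}^2\le\norm{\Delta u}^2+\norm{\Lambda^{1+\alpha}u}^2$ is absorbed by the dissipation. Moreover $\norm{\Lambda^{5/4}u}^2\lesssim\norm{\nabla u}^2+\norm{\Lambda^{\alpha}u}^2$ is integrable on $(0,\infty)$ by the energy identity. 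This gives $y'\lesssim\norm{\Lambda^{5/4}u}^2y$ for all $\alpha\ge\frac54$ at once.

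\emph{Third repair, making ``parabolic smoothing'' explicit.} From $u\in L^\infty_tL^2\cap L^2_t\dot H^{1+\alpha}$ with $1+\alpha>\frac32$ you get $u\in L^2_{\mathrm{loc}}L^\infty_x$. Then $\abs{\ip{\Lambda^s\diver(u\otimes u)}{\Lambda^s u}}\lesssim\norm{u}_{L^\infty}\norm{\Lambda^s u}\norm{\Lambda^{s+1}u}$ closes Gronwall in $H^s$ against $\nu\norm{\Lambda^{s+1}u}^2$. This works for every $\alpha>1$, so it also covers the continuation in (v), and you never need $\nabla u\in L^1_tL^\infty_x$.
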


Parts (iv) and (v) correspond, respectively, to the large-data hyperdissipative regime and the small-data global theory below the Lions threshold. Before turning to the proofs we first isolate the Fourier-symbol mechanism behind these results.

\section{Fourier-symbol criterion, scaling, and energy identity}\label{sec:criterion}

This section addresses the central question: which convolution perturbations really change the three-dimensional Navier--Stokes problem, and which ones do not?

\subsection{A symbol criterion for convolution modifications}

We begin with a precise statement reflecting the discussion in the introduction.

\begin{proposition}[Harmless versus genuinely regularizing kernels]\label{prop:criterion}
Let $u:\R^3\to\R^3$ be divergence free and let the extra linear term be described by a Fourier multiplier with symbol $\ell(\xi)$.
\begin{enumerate}[label=(\roman*)]
\item If $Lu=a*u$ with $a\in L^1(\R^3)$, then $\ell(\xi)=\hat a(\xi)$ is bounded, hence $L$ is an order-zero perturbation. In particular, it does not alter the high-frequency scaling of the classical Navier--Stokes equations.
\item If $Lu=b*\partial_{x_j}u$ with $b\in L^1(\R^3)$, then $\ell(\xi)=i\xi_j \hat b(\xi)$ is at most first order. If, in addition, $b$ is real and even, then $\hat b$ is real and even, so $\ell(\xi)$ is purely imaginary and contributes no sign to the $L^2$ energy.
\item If $Lu=c*\partial_{x_jx_j}u$ and $\Re\hat c(\xi)\ge c_\ast |\xi|^{2\alpha-2}$ for some $\alpha>1$, then
\[
\Re\bigl(-\ell(\xi)\bigr)
=
\xi_j^2 \Re \hat c(\xi)
\ge
c_\ast \xi_j^2 |\xi|^{2\alpha-2}.
\]
If this lower bound is summed over $j=1,2,3$, then one obtains a dissipative symbol comparable to $|\xi|^{2\alpha}$, and the resulting term is genuinely hyperdissipative.
\end{enumerate}
\end{proposition}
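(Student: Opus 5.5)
The plan is to verify each of the three items directly from the convolution theorem $\widehat{f*g}=\hat f\,\hat g$ (with the normalization $\hat f(\xi)=\int f(x)e^{-ix\cdot\xi}\dd x$) and from Plancherel's identity, reading off the order and the sign of the symbol $\ell$. No earlier result beyond the definition of a Fourier multiplier and the energy pairing \eqref{eq:coercive-equivalence} is needed.

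For (i), I will note that $\widehat{a*u}=\hat a\,\hat u$, so $\ell=\hat a$. The Riemann--Lebesgue bound $\abs{\hat a(\xi)}\le\norm{a}_{L^1}$ then shows that $L$ is bounded on every $H^s$ with norm at most $\norm{a}_{L^1}$. Under the scaling $u_\lambda(x,t)=\lambda u(\lambda x,\lambda^2t)$, the term $a*u$ carries a factor $\lambda^{-2}$ relative to $\Delta u$, which quantifies ``order zero.'' I will state this as the precise meaning of ``does not alter high-frequency scaling.'' Indeed, $\abs{\ell(\xi)}/\abs{\xi}^2\to0$ as $\abs{\xi}\to\infty$.

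For (ii), since $\widehat{\partial_j u}=i\xi_j\hat u$, I get $\ell(\xi)=i\xi_j\hat b(\xi)$ and $\abs{\ell(\xi)}\le\norm{b}_{L^1}\abs{\xi}$. If $b$ is real and even, then $\overline{\hat b(\xi)}=\hat b(-\xi)=\hat b(\xi)$, so $\hat b$ is real and even and $\ell$ is purely imaginary. By Plancherel,
\[
\ip{Lu}{u}=\int i\xi_j\hat b(\xi)\abs{\hat u(\xi)}^2\dd\xi .
\]
The left side is real because $u$ and $b$ are real. The right side is purely imaginary, so both vanish. Equivalently, the integrand is odd in $\xi$, since $\abs{\hat u}^2$ is even for real $u$. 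Hence the term contributes nothing to $\frac{\dd}{\dd t}\norm{u}^2$.

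For (iii), $\widehat{\partial_{jj}u}=-\xi_j^2\hat u$ gives $\ell=-\xi_j^2\hat c$, so $\Re(-\ell)=\xi_j^2\Re\hat c\ge c_\ast\xi_j^2\abs{\xi}^{2\alpha-2}$. Summing over $j$ with $\sum_j\xi_j^2=\abs{\xi}^2$ yields $\Re(-\sum_j\ell_j)\ge c_\ast\abs{\xi}^{2\alpha}$. This is the lower bound in \eqref{eq:symbol-growth} for $m=\Re(-\ell)$. Plancherel then gives
\[
-\Re\ip{Lu}{u}\ge c_\ast\norm{\Lambda^\alpha u}_{L^2}^2,
\]
and the imaginary part of the symbol drops out of the real energy pairing as in (ii).

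The only delicate point, and the step I expect to be the main obstacle, is what ``comparable'' means in (iii). The stated hypothesis gives only the lower bound. Full membership in $\mathfrak M_\alpha$ also needs the upper bound $\Re\hat c(\xi)\lesssim1+\abs{\xi}^{2\alpha-2}$ and evenness of $\Re\hat c$. I will add these as explicit assumptions, or remark that evenness holds automatically for real $c$. I will also note that a single $j$ only gives anisotropic damping, which is why the sum over $j$ is required.
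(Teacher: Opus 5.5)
Your proposal is correct and follows essentially the same route as the paper: you read off each symbol from $\widehat{f*g}=\hat f\,\hat g$ and $\widehat{\partial_j u}=i\xi_j\hat u$, use $\norm{\hat a}_{L^\infty}\le\norm{a}_{L^1}$, and take real parts in the Plancherel energy pairing. Your caveat about ``comparable'' in (iii) is fair: the paper's proof also establishes only the coercive lower bound $\sum_j\xi_j^2\Re\hat c_j(\xi)\ge c_0\abs{\xi}^{2\alpha}$, so the upper bound needed for membership in $\mathfrak M_\alpha$ must indeed be assumed separately.
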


\begin{proof}
Parts (i) and (ii) are immediate from the Fourier transform identities
\[
\widehat{a*u}(\xi)=\hat a(\xi)\hat u(\xi),
\qquad
\widehat{b*\partial_{x_j}u}(\xi)=i\xi_j \hat b(\xi)\hat u(\xi),
\]
together with the standard estimate $\norm{\hat a}_{L^\infty}\le \norm{a}_{L^1}$ and the corresponding bound for $\hat b$. In the real-even case the quantity $i\xi_j\hat b(\xi)$ is purely imaginary, so its real part vanishes in the energy balance.

For part (iii), one computes
\[
\widehat{c*\partial_{x_jx_j}u}(\xi)
=
-\xi_j^2 \hat c(\xi)\hat u(\xi).
\]
Taking real parts gives the displayed inequality. If the sum over $j$ satisfies
\[
\sum_{j=1}^3 \xi_j^2 \Re \hat c_j(\xi)\ge c_0 |\xi|^{2\alpha},
\]
then the resulting operator contributes a coercive dissipation of order $2\alpha$.
\end{proof}

\begin{remark}[Time convolution]
A memory term of the form
\[
(\kappa *_t u)(t)=\int_0^t \kappa(t-s)u(s)\dd s
\]
changes the temporal response of the system but does not, by itself, raise the spatial differential order. For this reason, time convolution on $u$ alone is typically less effective than space convolution on second derivatives when the goal is to change the global regularity balance of the three-dimensional problem.
\end{remark}

\subsection{Scaling and the Lions threshold}

The previous proposition shows that the only truly promising perturbations are those whose symbol behaves like $-|\xi|^{2\alpha}$ with $\alpha>1$. The next proposition identifies the critical threshold.

\begin{proposition}[Scaling and critical growth]\label{prop:scaling}
Assume that $m(\xi)=\mu |\xi|^{2\alpha}$ with $\mu>0$. If $u$ solves
\[
\partial_t u + (u\cdot\nabla)u + \nabla p + \mu (-\Delta)^\alpha u = 0,
\qquad \diver u=0,
\]
then, for every $\lambda>0$,
\[
u_\lambda(x,t)=\lambda^{2\alpha-1}u(\lambda x,\lambda^{2\alpha}t),
\qquad
p_\lambda(x,t)=\lambda^{4\alpha-2}p(\lambda x,\lambda^{2\alpha}t)
\]
is again a solution. Moreover,
\[
\norm{u_\lambda(\cdot,0)}_{L^2(\R^3)}
=
\lambda^{2\alpha-\frac52}\norm{u_0}_{L^2(\R^3)}.
\]
Hence the $L^2$ energy is critical precisely when $\alpha=\frac54$.
\end{proposition}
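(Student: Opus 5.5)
The plan is a direct verification by the chain rule and the homogeneity of the symbol, followed by a change of variables in the $L^2$ norm. Write $y=\lambda x$ and $s=\lambda^{2\alpha}t$. First I would compute each term of the equation for $u_\lambda$ in terms of $u$ evaluated at $(y,s)$:
\[
\partial_t u_\lambda(x,t)=\lambda^{4\alpha-1}(\partial_s u)(y,s),
\qquad
(u_\lambda\cdot\nabla_x)u_\lambda(x,t)=\lambda^{4\alpha-1}\bigl((u\cdot\nabla_y)u\bigr)(y,s),
\]
and $\nabla_x p_\lambda(x,t)=\lambda^{4\alpha-1}(\nabla_y p)(y,s)$. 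For the dissipative term I would use the Fourier side. If $f_\lambda(x)=f(\lambda x)$, then $\widehat{f_\lambda}(\xi)=\lambda^{-3}\hat f(\xi/\lambda)$. Hence
\[
\bigl((-\Delta)^\alpha f_\lambda\bigr)^\wedge(\xi)=|\xi|^{2\alpha}\lambda^{-3}\hat f(\xi/\lambda)=\lambda^{2\alpha}\lambda^{-3}\bigl(|\eta|^{2\alpha}\hat f(\eta)\bigr)\big|_{\eta=\xi/\lambda},
\]
that is, $(-\Delta)^\alpha f_\lambda=\lambda^{2\alpha}\bigl((-\Delta)^\alpha f\bigr)(\lambda\,\cdot)$. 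Applied to $u_\lambda$, this gives $\mu(-\Delta)^\alpha u_\lambda=\lambda^{4\alpha-1}\mu\bigl((-\Delta)^\alpha u\bigr)(y,s)$.

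All four terms therefore carry the common factor $\lambda^{4\alpha-1}$, so the equation for $(u_\lambda,p_\lambda)$ equals $\lambda^{4\alpha-1}$ times the original equation evaluated at $(y,s)$, and it vanishes. Divergence-freeness is preserved because $\diver_x u_\lambda=\lambda^{2\alpha}(\diver_y u)(y,s)=0$. If $u$ is only a distributional solution, the same computation can be done by pairing with rescaled test functions. For the norm, the change of variables $y=\lambda x$ gives
\[
\norm{u_\lambda(\cdot,0)}_{L^2}^2=\lambda^{4\alpha-2}\int|u_0(\lambda x)|^2\dd x=\lambda^{4\alpha-2-3}\norm{u_0}_{L^2}^2.
\]
Taking square roots yields the exponent $2\alpha-\tfrac52$.

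For the conclusion, the energy is scale invariant exactly when this exponent vanishes, i.e.\ $\alpha=\tfrac54$. For $\alpha>\tfrac54$, small scales ($\lambda\to\infty$) carry large energy, so the energy bound controls small scales and the problem is subcritical. For $\alpha<\tfrac54$ the reverse holds and the problem is supercritical.

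The only step needing care is the homogeneity of the nonlocal operator $(-\Delta)^\alpha$ under dilation. It is handled cleanly through the multiplier $|\xi|^{2\alpha}$ as above, and it is exactly where the pure-power form $m(\xi)=\mu|\xi|^{2\alpha}$ is used. For a general $m\in\mathfrak M_\alpha$ the identity would only hold up to the comparability constants.
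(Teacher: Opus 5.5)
Your proposal is correct and follows the paper's proof essentially line by line. You verify that every term in the equation, including the fractional dissipation, picks up the common factor $\lambda^{4\alpha-1}$, and then obtain $\norm{u_\lambda(\cdot,0)}_{L^2}^2=\lambda^{4\alpha-5}\norm{u_0}_{L^2}^2$ by the change of variables $y=\lambda x$. Your Fourier-side derivation of $(-\Delta)^\alpha[f(\lambda\,\cdot)]=\lambda^{2\alpha}\bigl((-\Delta)^\alpha f\bigr)(\lambda\,\cdot)$ just makes explicit a step the paper leaves as a direct computation.
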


\begin{proof}
A direct computation shows that
\[
\partial_t u_\lambda
=
\lambda^{4\alpha-1}(\partial_t u)(\lambda x,\lambda^{2\alpha}t),
\quad
(u_\lambda\cdot\nabla)u_\lambda
=
\lambda^{4\alpha-1}((u\cdot\nabla)u)(\lambda x,\lambda^{2\alpha}t),
\]
and
\[
(-\Delta)^\alpha u_\lambda
=
\lambda^{4\alpha-1}((-\Delta)^\alpha u)(\lambda x,\lambda^{2\alpha}t).
\]
The pressure scales compatibly, and divergence freedom is preserved.

For the $L^2$ norm, a change of variables yields
\[
\norm{u_\lambda(\cdot,0)}_{L^2}^2
=
\int_{\R^3}\lambda^{4\alpha-2}\abs{u(\lambda x,0)}^2\dd x
=
\lambda^{4\alpha-5}\norm{u_0}_{L^2}^2,
\]
which proves the claim.
\end{proof}

\begin{remark}
The scaling argument explains why $\alpha>\frac54$ is genuinely easier than the classical case and why $\alpha=\frac54$ is the natural boundary between subcritical and supercritical energy behavior. This threshold is often referred to as the \emph{Lions exponent}.
\end{remark}

\subsection{The exact energy identity}

We next record the energy law for smooth solutions.

\begin{theorem}[Energy identity]\label{thm:energy}
Assume that $m\in\mathfrak M_\alpha$ with $\alpha>1$. Let $u$ be a smooth divergence-free solution of \eqref{eq:main} on $[0,T]$. Then for every $t\in[0,T]$,
\begin{equation}\label{eq:energy-identity}
\frac12 \norm{u(t)}_{L^2}^2
+
\nu \int_0^t \norm{\nabla u(s)}_{L^2}^2 \dd s
+
\int_0^t \int_{\R^3} m(\xi)\abs{\hat u(\xi,s)}^2\dd \xi \dd s
=
\frac12 \norm{u_0}_{L^2}^2.
\end{equation}
\end{theorem}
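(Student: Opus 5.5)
We take the $L^2$ inner product of \eqref{eq:main} with $u(t)$, treat the three resulting terms one at a time, and then integrate in time. Because $u$ is smooth and divergence free on $[0,T]$, we take ``smooth'' to mean that $u\in C^1([0,T];H^k_\sigma)$ for $k$ large (at least $k\ge 2\alpha$, so that $Mu(t)\in L^2$). This makes every pairing finite and justifies all integrations by parts. In particular, $t\mapsto\frac12\norm{u(t)}_{L^2}^2$ is $C^1$ with derivative $\ip{\partial_t u}{u}$.

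\textbf{Step 1: the nonlinear term.} The Leray projector $\Pcal$ is selfadjoint on $L^2$ and $\Pcal u=u$, so
\[
\ip{\Pcal\diver(u\otimes u)}{u}=\ip{\diver(u\otimes u)}{u}=\int_{\R^3}(u\cdot\nabla)u\cdot u\dd x .
\]
Here we used $\diver(u\otimes u)=(u\cdot\nabla)u$, which holds because $\diver u=0$. Writing $(u\cdot\nabla)u\cdot u=\tfrac12 u\cdot\nabla|u|^2$ and integrating by parts gives $-\tfrac12\int(\diver u)|u|^2\dd x=0$. The boundary terms vanish since $u\in H^k$ with $k$ large: we first integrate over balls and let the radius go to infinity, or we approximate by $C_c^\infty$ fields and pass to the limit, using that the trilinear form is continuous on $H^1\times H^1\times H^1$ in three dimensions.

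\textbf{Step 2: the linear terms.} Integration by parts, or equivalently Plancherel, gives $\ip{-\Delta u}{u}=\norm{\nabla u}_{L^2}^2$. For the nonlocal term we use Plancherel with the paper's normalization of the Fourier transform, as in \eqref{eq:coercive-equivalence}:
\[
\ip{Mu}{u}=\int_{\R^3} m(\xi)\hat u(\xi)\cdot\overline{\hat u(\xi)}\dd\xi=\int_{\R^3} m(\xi)\abs{\hat u(\xi)}^2\dd\xi .
\]
Two facts from \cref{def:symbol} are needed here. First, $m$ is real and even, so $M$ maps real fields to real fields and the pairing is real and nonnegative. Second, the integral is finite by the upper bound in \eqref{eq:symbol-growth}, since $u(t)\in H^\alpha$.

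\textbf{Step 3: integrate in time.} Adding the three contributions gives the differential identity of \cref{thm:main}(i):
\[
\tfrac12\tfrac{\dd}{\dd t}\norm{u}_{L^2}^2+\nu\norm{\nabla u}_{L^2}^2+\int m\abs{\hat u}^2\dd\xi=0 .
\]
We integrate this from $0$ to $t$. The map $s\mapsto\int m(\xi)\abs{\hat u(\xi,s)}^2\dd\xi$ is continuous, by the bound $\le c_1\norm{u(s)}_{H^\alpha}^2$ and the continuity of $u$ in $H^\alpha$, so the fundamental theorem of calculus applies and yields \eqref{eq:energy-identity}.

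The only step that needs care is the cancellation of the nonlinear term on the whole space $\R^3$. Both the boundary terms at infinity and the interchange of $\Pcal$ with the pairing must be justified. The remaining steps are direct applications of Plancherel and the symbol bounds.
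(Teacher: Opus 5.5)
Your proposal is correct and follows the paper's proof essentially step for step. Both arguments pair the projected equation with $u$, eliminate the nonlinear term using selfadjointness of $\Pcal$ and $\diver u=0$, evaluate the viscous and nonlocal terms by integration by parts and Plancherel, and integrate in time. The extra justifications you supply (vanishing of the terms at infinity, reality of $Mu$, continuity of the dissipation integrand for the fundamental theorem of calculus) are details the paper leaves implicit.
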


\begin{proof}
Pair the projected equation
\[
\partial_t u + \Pcal \diver(u\otimes u) + \nu(-\Delta)u + M u =0
\]
with $u$ in $L^2(\R^3)$. Since $\Pcal$ is orthogonal on $L^2_\sigma$ and $u$ is divergence free, the nonlinear term vanishes:
\[
\ip{\Pcal \diver(u\otimes u)}{u}
=
\ip{\diver(u\otimes u)}{u}
=
\int_{\R^3} (u\cdot\nabla)u\cdot u\,\dd x
=
\frac12 \int_{\R^3} u\cdot\nabla \abs{u}^2 \dd x
=0.
\]
The Laplacian term gives
\[
\ip{(-\Delta)u}{u}=\norm{\nabla u}_{L^2}^2.
\]
For the nonlocal term we use Plancherel:
\[
\ip{Mu}{u}
=
\int_{\R^3} m(\xi)\abs{\hat u(\xi)}^2 \dd \xi.
\]
Combining these identities yields
\[
\frac12 \frac{\dd}{\dd t}\norm{u(t)}_{L^2}^2
+
\nu \norm{\nabla u(t)}_{L^2}^2
+
\int_{\R^3}m(\xi)\abs{\hat u(\xi,t)}^2 \dd \xi
=
0.
\]
Integrating in time proves \eqref{eq:energy-identity}.
\end{proof}

\section{Global weak solutions}\label{sec:weak}

In this section we construct global weak solutions for arbitrary finite-energy divergence-free data. The proof follows the classical Leray--Hopf scheme, with the additional nonlocal term handled through \cref{eq:coercive-equivalence}.

\begin{theorem}[Global weak solutions]\label{thm:weak}
Assume that $m\in\mathfrak M_\alpha$ with $\alpha>1$ and let $u_0\in L^2_\sigma(\R^3)$. Then there exists a global weak solution $u$ of \eqref{eq:main} such that
\begin{equation}\label{eq:weak-energy}
\norm{u(t)}_{L^2}^2
+
2\nu \int_0^t \norm{\nabla u(s)}_{L^2}^2 \dd s
+
2 \int_0^t \norm{m(D)^{1/2}u(s)}_{L^2}^2 \dd s
\le
\norm{u_0}_{L^2}^2
\end{equation}
for every $t\ge0$.
\end{theorem}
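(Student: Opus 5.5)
We follow the classical Leray--Hopf scheme, using a Fourier-truncated Galerkin approximation on the whole space. For $n\in\mathbb N$ let $J_n$ be the Fourier projector onto frequencies $\{|\xi|\le n\}$. Since $J_n$ commutes with $\Pcal$, $\Delta$ and $M$, we consider the regularized problem
\[
\partial_t u_n + J_n\Pcal\diver(u_n\otimes u_n) + \nu(-\Delta)u_n + M u_n = 0,
\qquad u_n(0)=J_n u_0,
\]
posed on the closed subspace $V_n=J_nL^2_\sigma$. On $V_n$ all derivatives are bounded operators. By \eqref{eq:symbol-growth} we have $m\le c_1(1+n^{2\alpha})$ on the support, so $M$ is bounded there as well. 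The nonlinearity is locally Lipschitz $V_n\to V_n$, since $\|J_n\diver(u\otimes v)\|_{L^2}\lesssim n\,\|u\otimes v\|_{L^2}\lesssim n^{5/2}\|u\|_{L^2}\|v\|_{L^2}$ by Bernstein. Picard iteration therefore gives a local smooth solution $u_n\in C^1([0,T_n);V_n)$.

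Pairing with $u_n$ exactly as in the proof of \cref{thm:energy} gives the energy equality for $u_n$; the cancellation survives because $J_n u_n=u_n$ and $J_n$ is selfadjoint. Hence $T_n=\infty$, and we obtain uniform bounds
\[
u_n \text{ bounded in } L^\infty(0,\infty;L^2_\sigma)\cap L^2(0,\infty;\dot H^1)\cap L^2(0,\infty;\dot H^\alpha),
\]
where the last bound follows from \eqref{eq:coercive-equivalence}.

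Next we seek a time-derivative bound for compactness. Write $\partial_t u_n = -J_n\Pcal\diver(u_n\otimes u_n)-\nu(-\Delta)u_n-Mu_n$. The linear parts are bounded in $L^2(0,T;H^{-\alpha})$, using $m\lesssim 1+|\xi|^{2\alpha}$. For the nonlinear part we use Ladyzhenskaya-type interpolation: $\|u_n\otimes u_n\|_{L^2}\le\|u_n\|_{L^4}^2\lesssim \|u_n\|_{L^2}^{1/2}\|\nabla u_n\|_{L^2}^{3/2}$, which is bounded in $L^{4/3}(0,T)$. Thus $\partial_t u_n$ is bounded in $L^{4/3}(0,T;H^{-\alpha})$, since $\alpha>1$. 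The Aubin--Lions lemma, applied on balls $B_R$ with a diagonal argument in $R$ and $T$, gives a subsequence with $u_n\to u$ strongly in $L^2_{\mathrm{loc}}([0,\infty);L^2_{\mathrm{loc}}(\R^3))$, weak-$*$ in $L^\infty L^2$, and weakly in $L^2H^1\cap L^2H^\alpha$.

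Passing to the limit is the main obstacle. The linear terms converge weakly. For the nonlinear term we test against divergence-free $\varphi\in C_c^\infty$, writing $\int u_n\otimes u_n:\nabla J_n\varphi$. We have $J_n\varphi\to\varphi$ in every $H^k$, but $J_n\varphi$ is no longer compactly supported. So we split $\nabla J_n\varphi=\nabla\varphi+\nabla(J_n\varphi-\varphi)$. The error term is controlled by $\|u_n\|_{L^2L^4}^2\,\|\nabla(J_n-I)\varphi\|_{L^2L^\infty}\to0$. The main term converges by local strong $L^2$ convergence combined with the uniform $L^4$ bound, since $u_n\otimes u_n\to u\otimes u$ in $L^1_{\mathrm{loc}}$.

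Continuity $u\in C_w([0,\infty);L^2_\sigma)$ follows from the derivative bound, and $u(0)=u_0$ since $J_nu_0\to u_0$. Finally, for \eqref{eq:weak-energy} we integrate the approximate energy equality and pass to the limit using weak lower semicontinuity of $\|u(t)\|_{L^2}^2$ (weak convergence at each $t$), of $\int_0^t\|\nabla u\|^2$, and of $\int_0^t\|m(D)^{1/2}u\|^2$. The last is a weighted $L^2$ norm of $\hat u$, hence weakly lower semicontinuous. Together with $\|J_nu_0\|\le\|u_0\|$, this gives the inequality for every $t\ge0$.
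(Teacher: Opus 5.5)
Your proof is correct and has the same skeleton as the paper's. Both go through an approximate problem, the energy equality with uniform Leray-type bounds, a negative-Sobolev bound on $\partial_t u_n$, Aubin--Lions on balls with a diagonal argument, local strong convergence for the nonlinear term, and weak lower semicontinuity for \eqref{eq:weak-energy}. The one genuine difference is the approximation: you use the spectral truncation $J_n$ instead of the orthogonal projection $P_N$ onto an abstract smooth solenoidal basis $\{w_k\}$, and this choice is an advantage. Since $J_n$ commutes with $\Pcal$, $\Delta$ and $M$ and is a contraction on every $H^s$, the approximate equation holds as an identity in $V_n$. Your bound for $\partial_t u_n$ in $L^{4/3}(0,T;H^{-\alpha})$ is therefore fully justified. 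In the paper's scheme one actually has $\partial_t u_N=-P_N[\nu(-\Delta)u_N+Mu_N+\Pcal\diver(u_N\otimes u_N)]$; the displayed formula there drops $P_N$ on the linear terms. For an arbitrary basis of $L^2_\sigma(\R^3)$ there is in general no uniform control of $P_N$ on negative Sobolev spaces, so that step tacitly needs either a specially chosen basis or a componentwise compactness argument. The price of your choice is that $J_n\varphi$ loses compact support, and your splitting handles this. The H\"older pairing in the error term should be corrected to $\norm{u_n}_{L^2_tL^4_x}^2\norm{\nabla(J_n-I)\varphi}_{L^\infty_tL^2_x}$, or alternatively $\norm{u_n}_{L^\infty_tL^2_x}^2\norm{\nabla(J_n-I)\varphi}_{L^1_tL^\infty_x}$; either still tends to zero. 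Finally, the pointwise weak convergence $u_n(t)\rightharpoonup u(t)$, which you use to get the energy inequality at every $t$, deserves one line. It follows from $\norm{u_n(t)-u_n(s)}_{H^{-\alpha}}\lesssim\abs{t-s}^{1/4}$, the uniform $L^2$ bound, and an Arzel\`a--Ascoli argument.
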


\begin{proof}
Let $\{w_k\}_{k\ge1}$ be an orthonormal basis of $L^2_\sigma(\R^3)$ consisting of smooth divergence-free functions, and let $P_N$ denote the orthogonal projection onto $\mathrm{span}\{w_1,\dots,w_N\}$. We seek an approximate solution of the form
\[
u_N(t)=\sum_{k=1}^N g_{k,N}(t) w_k
\]
solving the finite-dimensional system
\begin{equation}\label{eq:galerkin}
\frac{\dd}{\dd t}\ip{u_N}{w_k}
+
\nu \ip{\nabla u_N}{\nabla w_k}
+
\ip{M u_N}{w_k}
+
\ip{\Pcal\diver(u_N\otimes u_N)}{w_k}
=
0,
\qquad 1\le k\le N.
\end{equation}
This is an ODE system with locally Lipschitz right-hand side, hence admits a local solution.

Testing \eqref{eq:galerkin} against $u_N$ and summing over $k$ yields
\[
\frac12\frac{\dd}{\dd t}\norm{u_N(t)}_{L^2}^2
+
\nu \norm{\nabla u_N(t)}_{L^2}^2
+
\ip{M u_N(t)}{u_N(t)}
=0,
\]
because the nonlinear term vanishes exactly as in \cref{thm:energy}. By \cref{eq:coercive-equivalence},
\[
\ip{M u_N}{u_N}\ge c_0 \norm{\Lambda^\alpha u_N}_{L^2}^2.
\]
Therefore
\begin{equation}\label{eq:galerkin-bound}
\sup_{t\in[0,T]}\norm{u_N(t)}_{L^2}^2
+
2\nu \int_0^T \norm{\nabla u_N(s)}_{L^2}^2 \dd s
+
2c_0 \int_0^T \norm{\Lambda^\alpha u_N(s)}_{L^2}^2 \dd s
\le
\norm{P_N u_0}_{L^2}^2
\le
\norm{u_0}_{L^2}^2.
\end{equation}
This uniform bound rules out finite-time blow-up of the ODE system, so $u_N$ is defined for all $t\ge0$.

The estimates \eqref{eq:galerkin-bound} imply
\[
u_N \ \text{bounded in }\ L^\infty(0,T;L^2_\sigma)\cap L^2(0,T;H^1_\sigma)\cap L^2(0,T;H^\alpha_\sigma).
\]
Moreover, from the projected equation,
\[
\partial_t u_N
=
-\nu (-\Delta)u_N - M u_N - P_N \Pcal\diver(u_N\otimes u_N),
\]
we obtain a uniform bound in a negative Sobolev space. Indeed, the linear terms are controlled by the preceding estimates and the nonlinear term is bounded in $L^{4/3}(0,T;H^{-m})$ for any fixed $m>\frac52$ by standard Sobolev embeddings. Hence, after passing to a subsequence and using Aubin--Lions compactness on bounded spatial domains together with a diagonal argument, we obtain
\[
u_N \to u
\quad \text{weakly-* in }L^\infty(0,T;L^2),
\qquad
u_N \to u
\quad \text{weakly in }L^2(0,T;H^1\cap H^\alpha),
\]
and strongly in $L^2_{\mathrm{loc}}((0,T)\times\R^3)$.

The strong local convergence suffices to pass to the nonlinear term in the usual manner. Lower semicontinuity yields the energy inequality \eqref{eq:weak-energy}. Since $T>0$ is arbitrary, the solution is global.
\end{proof}

\section{Local strong well-posedness}\label{sec:local}

We now prove local strong solvability in Sobolev spaces of order $s>\frac52$. The argument is based on a linear energy estimate and a bilinear estimate for the projected nonlinearity.

\begin{lemma}[Linear energy estimate]\label{lem:linear}
Let $s\in\R$, $T>0$, and let $v$ solve
\begin{equation}\label{eq:linear}
\partial_t v + \nu(-\Delta)v + M v = F,
\qquad
\diver v=0,
\qquad
v(0)=v_0.
\end{equation}
Then
\begin{equation}\label{eq:linear-est}
\sup_{0\le t\le T}\norm{v(t)}_{H^s}^2
+
\nu \int_0^T \norm{v(t)}_{H^{s+1}}^2 \dd t
+
c_0 \int_0^T \norm{v(t)}_{H^{s+\alpha}}^2 \dd t
\le
C\left(
\norm{v_0}_{H^s}^2
+
\int_0^T \norm{F(t)}_{H^{s-\alpha}}^2 \dd t
\right)
\end{equation}
for a constant $C$ depending only on $\nu$, $c_0$, and $c_1$.
\end{lemma}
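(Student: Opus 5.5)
The plan is to work entirely on the Fourier side, where \eqref{eq:linear} decouples into a family of scalar linear ODEs indexed by $\xi$. This gives the estimate by a frequency-wise energy computation, without needing to justify pairings of $v$ with itself in Sobolev spaces. Taking the Fourier transform of \eqref{eq:linear} gives, for a.e.\ $\xi$,
\[
\partial_t \hat v(\xi,t) + \bigl(\nu|\xi|^2 + m(\xi)\bigr)\hat v(\xi,t) = \hat F(\xi,t).
\]
This holds in the sense of absolutely continuous functions of $t$ for a.e.\ $\xi$, provided $F\in L^2(0,T;H^{s-\alpha})$ and $v$ is the mild (Duhamel) solution. The divergence-free condition plays no role beyond being preserved. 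Multiplying by $\overline{\hat v}$ and taking real parts yields
\[
\tfrac12 \partial_t |\hat v|^2 + \bigl(\nu|\xi|^2 + m(\xi)\bigr)|\hat v|^2 = \Re\bigl(\hat F\,\overline{\hat v}\bigr).
\]

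Next I multiply by the weight $|\xi|^{2s}$, which matches the convention $\norm{u}_{H^s}=\norm{\Lambda^s u}_{L^2}$ fixed in \cref{sec:model}, and integrate in $\xi$. This gives
\[
\tfrac12\tfrac{\dd}{\dd t}\norm{v}_{H^s}^2 + \nu\norm{v}_{H^{s+1}}^2 + \int |\xi|^{2s} m(\xi)|\hat v|^2\dd\xi \le \int |\xi|^{s-\alpha}|\hat F|\,|\xi|^{s+\alpha}|\hat v|\dd\xi .
\]
The lower bound in \eqref{eq:symbol-growth} gives $\int |\xi|^{2s} m|\hat v|^2 \ge c_0\norm{v}_{H^{s+\alpha}}^2$, exactly as in \eqref{eq:coercive-equivalence} with $\Lambda^s v$ in place of $u$. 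On the right, Cauchy--Schwarz and Young's inequality give
\[
\norm{F}_{H^{s-\alpha}}\norm{v}_{H^{s+\alpha}} \le \tfrac{1}{2c_0}\norm{F}_{H^{s-\alpha}}^2 + \tfrac{c_0}{2}\norm{v}_{H^{s+\alpha}}^2 ,
\]
and the last term is absorbed into the dissipation. Multiplying by $2$ and integrating over $[0,t]$ then gives
\[
\norm{v(t)}_{H^s}^2 + 2\nu\int_0^t\norm{v}_{H^{s+1}}^2 + c_0\int_0^t\norm{v}_{H^{s+\alpha}}^2 \le \norm{v_0}_{H^s}^2 + \tfrac1{c_0}\int_0^T\norm{F}_{H^{s-\alpha}}^2 .
\]
Taking the supremum in $t$ yields \eqref{eq:linear-est} with $C=\max(1,1/c_0)$. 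This constant depends only on $c_0$, which is consistent with the stated dependence; $c_1$ is not needed here.

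The main obstacle is the justification step: making sure the time differentiation of $|\xi|^{2s}|\hat v|^2$ and its integration in $\xi$ are legitimate for the class of solutions considered. I would handle this first for $\hat v$ supported in a frequency shell $\{\varepsilon\le|\xi|\le R\}$, where all quantities are finite and Fubini applies directly. Alternatively, one can integrate the pointwise identity in $t$ first, for fixed $\xi$, and only then in $\xi$. Monotone convergence as $\varepsilon\to0$ and $R\to\infty$ then removes the truncation, since every term on the left is nonnegative and the right-hand side is independent of the truncation. Uniqueness of the frequency-wise ODE solution shows that this mild solution coincides with any distributional solution in the stated class.
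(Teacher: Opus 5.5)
Your proof is correct and is essentially the paper's argument written frequency by frequency. The paper pairs $\Lambda^s$ of the equation with $\Lambda^s v$, uses Plancherel and $m\ge c_0|\xi|^{2\alpha}$, applies $H^{s-\alpha}$--$H^{s+\alpha}$ duality with the same Young split $\frac{1}{2c_0}$, $\frac{c_0}{2}$, and absorbs; your truncation and monotone-convergence step adds a justification the paper leaves implicit.

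There is one harmless slip in the constant. You bound $\sup_t\norm{v(t)}_{H^s}^2$ and the $[0,T]$ time integrals separately by your integrated inequality and then add them, which costs a factor $2$. The constant is therefore $2\max(1,1/c_0)$, not $\max(1,1/c_0)$. For instance, with $F=0$, $m=c_0|\xi|^{2\alpha}$, $c_0\ge 1$ and $T$ large, the left side tends to $\frac32\norm{v_0}_{H^s}^2$.
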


\begin{proof}
Apply $\Lambda^s$ to \eqref{eq:linear}, take the $L^2$ inner product with $\Lambda^s v$, and use Plancherel:
\[
\frac12 \frac{\dd}{\dd t}\norm{\Lambda^s v}_{L^2}^2
+
\nu \norm{\Lambda^{s+1}v}_{L^2}^2
+
\int_{\R^3} m(\xi)\abs{\Lambda^s \hat v(\xi)}^2 \dd \xi
=
\ip{\Lambda^s F}{\Lambda^s v}.
\]
Using the lower bound on $m$ and duality between $H^{s-\alpha}$ and $H^{s+\alpha}$,
\[
\abs{\ip{\Lambda^s F}{\Lambda^s v}}
=
\abs{\ip{\Lambda^{s-\alpha}F}{\Lambda^{s+\alpha}v}}
\le
\frac{1}{2c_0}\norm{F}_{H^{s-\alpha}}^2
+
\frac{c_0}{2}\norm{v}_{H^{s+\alpha}}^2.
\]
After integrating in time and absorbing the last term on the left we obtain \eqref{eq:linear-est}.
\end{proof}

The next lemma is the key bilinear estimate.

\begin{lemma}[Bilinear estimate]\label{lem:bilinear}
Let $s>\frac52$ and $\alpha>1$. Then there exists $C=C(s,\alpha)$ such that for all divergence-free vector fields $u,v$,
\begin{equation}\label{eq:bilinear}
\norm{\Pcal\diver(u\otimes v)}_{H^{s-\alpha}}
\le
C \norm{u}_{H^s}\norm{v}_{H^{s+\alpha}}.
\end{equation}
In particular,
\begin{equation}\label{eq:bilinear-local}
\norm{\Pcal\diver(u\otimes v)}_{H^{s-\alpha}}
\le
C \norm{u}_{H^s}\norm{v}_{H^s}.
\end{equation}
\end{lemma}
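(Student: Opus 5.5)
The plan is to prove \eqref{eq:bilinear-local} first and deduce \eqref{eq:bilinear} from it, using only three standard facts about Sobolev spaces on $\R^3$. Throughout I read $\norm{\cdot}_{H^r}$ as the \emph{inhomogeneous} norm, with Fourier weight $(1+|\xi|^2)^{r/2}$. This is the one genuine obstacle, because the paper's convention $\norm{u}_{H^s}=\norm{\Lambda^s u}_{L^2}$ is homogeneous, and with that convention \eqref{eq:bilinear-local} is false by scaling. Indeed, replacing $u,v$ by $u(\lambda\cdot),v(\lambda\cdot)$ multiplies the left side of \eqref{eq:bilinear-local} by $\lambda^{s-\alpha-\frac12}$ and the right side by $\lambda^{2s-3}$. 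These exponents agree only for $s=\alpha+\frac52$, so letting $\lambda\to0$ or $\lambda\to\infty$ makes the ratio unbounded. So I will say explicitly that the lemma, and its later uses in \cref{sec:local}, refer to the inhomogeneous norms. The two norms are equivalent for functions whose Fourier transform vanishes near the origin, but not in general.

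With this convention, the argument runs as follows.
\begin{itemize}
\item First, $\Pcal$ is a Fourier multiplier with matrix symbol $I-\xi\xi^{T}/|\xi|^2$. This symbol is bounded by $1$, so $\Pcal$ is bounded on every $H^r$ with norm at most $1$.
\item Second, $\diver$ has symbol of size at most $|\xi|\le(1+|\xi|^2)^{1/2}$. Hence $\norm{\diver(u\otimes v)}_{H^{s-\alpha}}\le\norm{u\otimes v}_{H^{s-\alpha+1}}$.
\item Third, since $\alpha>1$ we have $s-\alpha+1<s$. The inhomogeneous scale is monotone, so $\norm{u\otimes v}_{H^{s-\alpha+1}}\le\norm{u\otimes v}_{H^s}$. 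This holds even when $s-\alpha+1$ is negative, which happens for large $\alpha$.
\item Finally, because $s>\frac52>\frac32$, $H^s(\R^3)$ is a Banach algebra. The Kato--Ponce product estimate $\norm{fg}_{H^s}\le C(\norm{f}_{L^\infty}\norm{g}_{H^s}+\norm{f}_{H^s}\norm{g}_{L^\infty})$, combined with the embedding $H^s\hookrightarrow L^\infty$, gives $\norm{u_iv_j}_{H^s}\le C\norm{u}_{H^s}\norm{v}_{H^s}$ componentwise.
\end{itemize}
Chaining these four steps yields \eqref{eq:bilinear-local} with $C=C(s)$. Divergence-freeness of $u$ and $v$ is not needed, and the gap $\alpha-1>0$ of regularity is simply thrown away.

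For \eqref{eq:bilinear}, I will use $\norm{v}_{H^s}\le\norm{v}_{H^{s+\alpha}}$, which again holds because the norms are inhomogeneous; substituting this into \eqref{eq:bilinear-local} gives the claim. Although the lemma states \eqref{eq:bilinear} first, it is the weaker of the two estimates, so the order of proof is reversed.

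If a sharper form is wanted for the later a priori estimates, one could instead place the spare derivatives on $v$, for instance $\norm{u\otimes v}_{H^{s+\alpha-1}}\lesssim\norm{u}_{H^{s+\alpha-1}}\norm{v}_{H^{s+\alpha-1}}$, or use paraproducts. The stated lemma does not require this, so I will not pursue it.
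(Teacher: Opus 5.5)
Your proof is correct. It uses the same ingredients as the paper: boundedness of $\Pcal$, one derivative lost to $\diver$, $s+1-\alpha\le s$ since $\alpha>1$, and a Sobolev product estimate. You arrange them differently, and your arrangement is the logically sound one.

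The paper keeps $v$ at low regularity. It uses that multiplication by an $H^s$ function is bounded on $H^r$ for $r\in[-s,s]$ to get $\norm{u\otimes v}_{H^{s+1-\alpha}}\le C\norm{u}_{H^s}\norm{v}_{H^{s+1-\alpha}}$. It then proves \eqref{eq:bilinear} and asserts that \eqref{eq:bilinear-local} follows from $H^{s+\alpha}\hookrightarrow H^s$. That embedding only gives $\norm{v}_{H^s}\le\norm{v}_{H^{s+\alpha}}$, so it passes from \eqref{eq:bilinear-local} to \eqref{eq:bilinear}, not back. The paper's intermediate bound would give \eqref{eq:bilinear-local} via $\norm{v}_{H^{s+1-\alpha}}\le\norm{v}_{H^s}$, but that is not the argument written.

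You instead go up to $H^s$ by monotonicity and use the algebra property. The stronger estimate therefore comes first and the weaker one is immediate. This also avoids the restriction $s+1-\alpha\ge -s$, i.e.\ $\alpha\le 2s+1$, which the paper's multiplier step tacitly needs. The price is discarding the sharper factor $\norm{v}_{H^{s+1-\alpha}}$, which neither stated estimate requires.

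Your decision to read all norms as inhomogeneous is also right. With the paper's stated convention $\norm{u}_{H^s}=\norm{\Lambda^s u}_{L^2}$ the lemma fails, and the paper itself uses the weight $(1+|\xi|^2)^s$ in \eqref{eq:Hs-energy-start}.

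There is one arithmetic slip. The exponents $s-\alpha-\frac12$ and $2s-3$ agree when $s=\frac52-\alpha$, not $s=\alpha+\frac52$. Since $\frac52-\alpha<\frac32$, this never happens for admissible $s$, so your scaling objection holds for every admissible pair. With your formula, the admissible case $s=\alpha+\frac52$ would have been left uncovered.
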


\begin{proof}
Since $s>\frac32$, the space $H^s(\R^3)$ is a Banach algebra and multiplication by an $H^s$ function acts boundedly on $H^r$ for every $r\in[-s,s]$. Here
\[
s+1-\alpha \le s
\qquad \text{because } \alpha>1.
\]
Therefore
\[
\norm{u\otimes v}_{H^{s+1-\alpha}}
\le
C \norm{u}_{H^s}\norm{v}_{H^{s+1-\alpha}}
\le
C \norm{u}_{H^s}\norm{v}_{H^{s+\alpha}}.
\]
Applying one derivative and the boundedness of the Leray projector on Sobolev spaces yields
\[
\norm{\Pcal\diver(u\otimes v)}_{H^{s-\alpha}}
\le
C \norm{u\otimes v}_{H^{s+1-\alpha}}
\le
C \norm{u}_{H^s}\norm{v}_{H^{s+\alpha}},
\]
which proves \eqref{eq:bilinear}. The estimate \eqref{eq:bilinear-local} follows from the embedding $H^{s+\alpha}\hookrightarrow H^s$.
\end{proof}

We can now prove local well-posedness.

\begin{theorem}[Local strong well-posedness]\label{thm:local}
Assume that $m\in\mathfrak M_\alpha$ with $\alpha>1$, let $s>\frac52$, and let $u_0\in H^s_\sigma(\R^3)$. Then there exists $T=T(\norm{u_0}_{H^s})>0$ and a unique strong solution
\[
u\in C([0,T];H^s_\sigma(\R^3))\cap L^2(0,T;H^{s+\alpha}_\sigma(\R^3))
\]
of \eqref{eq:main}. Moreover, the solution depends continuously on $u_0$ in $H^s$.
\end{theorem}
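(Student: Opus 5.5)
We argue by a contraction mapping in the energy space
\[
X_T := C([0,T];H^s_\sigma(\R^3))\cap L^2(0,T;H^{s+\alpha}_\sigma(\R^3)),
\qquad
\norm{u}_{X_T}^2 := \sup_{0\le t\le T}\norm{u(t)}_{H^s}^2 + \int_0^T\norm{u(t)}_{H^{s+\alpha}}^2\dd t .
\]
Here we use the inhomogeneous $H^s$ norm, so that low frequencies are controlled. Given $w\in X_T$, let $\Phi(w)$ be the solution of the linear problem \eqref{eq:linear} with data $u_0$ and forcing $F=-\Pcal\diver(w\otimes w)$. This linear problem is solved explicitly by the Duhamel formula with the multiplier semigroup $e^{-t(\nu|\xi|^2+m(\xi))}$, which preserves solenoidality.

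\Cref{lem:linear} applied with $s$ gives
\[
\norm{\Phi(w)}_{X_T}^2\le C\norm{u_0}_{H^s}^2 + C\int_0^T\norm{\Pcal\diver(w\otimes w)}_{H^{s-\alpha}}^2\dd t .
\]
To close this estimate, \cref{lem:bilinear} is used in its mixed form \eqref{eq:bilinear}, symmetrized so that only one factor carries the high norm:
\[
\int_0^T \norm{\Pcal\diver(w\otimes w)}_{H^{s-\alpha}}^2\dd t\le C\sup_t\norm{w}_{H^s}^2\int_0^T\norm{w}_{H^{s+\alpha}}^2\dd t\le C\norm{w}_{X_T}^4 .
\]
This bound alone has no small factor of $T$, so the cruder bound \eqref{eq:bilinear-local} is used instead:
\[
\int_0^T\norm{\Pcal\diver(w\otimes w)}_{H^{s-\alpha}}^2\dd t\le C\,T\sup_t\norm{w}_{H^s}^4\le C\,T\,\norm{w}_{X_T}^4 .
\]
It follows that $\Phi$ maps the ball $B_R=\{\norm{w}_{X_T}\le R\}$ with $R^2=2C\norm{u_0}_{H^s}^2$ into itself once $C T R^4\le C\norm{u_0}_{H^s}^2$. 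The difference estimate follows in the same way from the bilinear identity
\[
w_1\otimes w_1-w_2\otimes w_2=(w_1-w_2)\otimes w_1+w_2\otimes(w_1-w_2),
\]
which gives $\norm{\Phi(w_1)-\Phi(w_2)}_{X_T}\le C\,T^{1/2}R\,\norm{w_1-w_2}_{X_T}$. Hence $\Phi$ is a contraction for $T\sim(1+\norm{u_0}_{H^s})^{-2}$. The fixed point is the strong solution. Its nonlinear term lies in $L^2(0,T;H^{s-\alpha})$ by the bound above.

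Time continuity with values in $H^s$ is the step needing care, since \cref{lem:linear} only gives an $L^\infty$ bound in time. We obtain it from the Duhamel representation. The semigroup part is continuous in $H^s$ by dominated convergence on the Fourier side. For the forcing part we use the standard Lions--Magenes argument: $v\in L^2H^{s+\alpha}$ and $\partial_t v\in L^2H^{s-\alpha}$ together imply $v\in C(H^s)$. The energy identity for $\Lambda^s v$ holds rigorously for such $v$, so all formal computations above are justified.

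Uniqueness in the class of strong solutions, which is larger than the ball $B_R$, and continuous dependence both come from one difference estimate. Let $u_1,u_2$ be two strong solutions and set $\delta=u_1-u_2$. Then $\delta$ solves \eqref{eq:linear} with forcing
\[
F=-\Pcal\diver(\delta\otimes u_1+u_2\otimes\delta).
\]
Applying \cref{lem:linear} on a short interval $[0,\tau]$ together with \eqref{eq:bilinear-local} gives
\[
\norm{\delta}_{X_\tau}^2\le C\norm{\delta(0)}_{H^s}^2+C\tau\bigl(\norm{u_1}_{X_T}^2+\norm{u_2}_{X_T}^2\bigr)\norm{\delta}_{X_\tau}^2 .
\]
We absorb the last term for small $\tau$ and iterate over finitely many subintervals covering $[0,T]$. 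This yields Lipschitz dependence on the data, and uniqueness follows by taking $\delta(0)=0$. The main obstacle is getting a genuinely small factor, either a power of $T$ or smallness of the data, in the contraction. The cleanest route is the crude $T$-factor bound above, which requires no maximal regularity.
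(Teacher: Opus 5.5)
Your proposal is correct and follows essentially the same route as the paper: a contraction in $L^\infty(0,T;H^s)\cap L^2(0,T;H^{s+\alpha})$ driven by \cref{lem:linear} and the crude bound \eqref{eq:bilinear-local}, with the small factor $T^{1/2}$ coming from $\int_0^T\norm{w}_{H^s}^4\,\dd t\le T\sup_t\norm{w}_{H^s}^4$. Your Lions--Magenes argument for continuity in $H^s$, and your iterated difference estimate (which gives uniqueness in the whole strong-solution class rather than only in the ball $B_R$), make explicit two steps that the paper only sketches.
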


\begin{proof}
Fix $T>0$ and consider the space
\[
X_T
:=
L^\infty(0,T;H^s_\sigma(\R^3))
\cap
L^2(0,T;H^{s+\alpha}_\sigma(\R^3))
\]
with norm
\[
\norm{u}_{X_T}
=
\norm{u}_{L^\infty(0,T;H^s)}
+
\norm{u}_{L^2(0,T;H^{s+\alpha})}.
\]
For $u\in X_T$, let $\Phi(u)=v$ be the solution of
\begin{equation}\label{eq:fixedpoint}
\partial_t v + \nu(-\Delta)v + M v = -\Pcal\diver(u\otimes u),
\qquad
v(0)=u_0.
\end{equation}
By \cref{lem:linear,lem:bilinear},
\[
\norm{\Phi(u)}_{X_T}
\le
C_0 \norm{u_0}_{H^s}
+
C_1 \norm{\Pcal\diver(u\otimes u)}_{L^2(0,T;H^{s-\alpha})}
\le
C_0 \norm{u_0}_{H^s}
+
C_2 T^{1/2}\norm{u}_{L^\infty(0,T;H^s)}^2.
\]
Hence
\begin{equation}\label{eq:mapping-est}
\norm{\Phi(u)}_{X_T}
\le
C_0 \norm{u_0}_{H^s}
+
C_2 T^{1/2}\norm{u}_{X_T}^2.
\end{equation}
Similarly, for $u,v\in X_T$,
\[
\Phi(u)-\Phi(v)
\]
solves a linear equation whose forcing is
\[
-\Pcal \diver \bigl( u\otimes u - v\otimes v\bigr)
=
-\Pcal \diver \bigl((u-v)\otimes u + v\otimes (u-v)\bigr).
\]
Therefore
\[
\norm{\Phi(u)-\Phi(v)}_{X_T}
\le
C_3 T^{1/2}\bigl(\norm{u}_{X_T}+\norm{v}_{X_T}\bigr)\norm{u-v}_{X_T}.
\]

Choose $R=2C_0 \norm{u_0}_{H^s}$ and then choose $T>0$ so small that
\[
C_2 T^{1/2} R \le \frac12,
\qquad
2 C_3 T^{1/2}R <1.
\]
Then $\Phi$ maps the closed ball $\{u\in X_T: \norm{u}_{X_T}\le R\}$ into itself and is a contraction there. Banach's fixed-point theorem yields a unique $u\in X_T$ solving \eqref{eq:main} on $[0,T]$.

To obtain continuity in time, note that \eqref{eq:main} implies
\[
\partial_t u \in L^2(0,T;H^{s-\alpha}),
\]
so $u\in C([0,T];H^{s-\varepsilon})$ for every $\varepsilon>0$. Since $u\in L^\infty(0,T;H^s)$ and the equation is parabolic, the standard argument upgrades this to continuity in $H^s$. Continuous dependence on the initial data follows from the same contraction estimate.
\end{proof}

\section{Global regularity at and above the Lions exponent, and global small-data theory}\label{sec:global}

We now discuss the two global regimes: large data at and above the critical exponent $\alpha=\frac54$, and small data for arbitrary $\alpha>1$.

\subsection{Large-data global regularity for $\alpha\ge \frac54$}

The first statement is an extension of the classical Lions theory to symbols comparable to $|\xi|^{2\alpha}$. Since the later continuation argument is driven by an $H^s$ energy inequality, we first record the product estimate that makes the threshold $\alpha=\frac54$ appear explicitly.

\begin{lemma}[Critical product estimate]\label{lem:critical-product}
Let $s>\frac52$ and $\alpha\ge \frac54$. Then there exists $C=C(s,\alpha)$ such that for every pair of divergence-free vector fields $u,v\in H^s_\sigma(\R^3)$,
\begin{equation}\label{eq:critical-product}
\norm{\Pcal\diver(u\otimes v)}_{H^{s-\alpha}}
\le C\norm{u}_{H^\alpha}\norm{v}_{H^s} + C\norm{u}_{H^s}\norm{v}_{H^\alpha}.
\end{equation}
In particular,
\begin{equation}\label{eq:critical-product-diagonal}
\norm{\Pcal\diver(u\otimes u)}_{H^{s-\alpha}}
\le C\norm{u}_{H^\alpha}\norm{u}_{H^s}.
\end{equation}
\end{lemma}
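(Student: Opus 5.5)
The plan is to strip off the projector and the divergence, and then prove a fractional Leibniz (Kato--Ponce type) estimate for $u\otimes v$ in $H^{r}$ with $r:=s+1-\alpha$. Throughout I read $H^s$ and $H^\alpha$ as inhomogeneous norms, i.e.\ $\norm{\cdot}_{L^2}+\norm{\Lambda^{\cdot}\,\cdot}_{L^2}$. This is forced by scaling: with purely homogeneous norms, $\dot H^\alpha\times\dot H^s\to\dot H^{s+1-\alpha}$ balances only when $\alpha+s-\frac32=s+1-\alpha$, that is, only at $\alpha=\frac54$.

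\emph{Step 1 (reduction).} Since $\Pcal$ is an order-zero Fourier multiplier and $\diver$ costs one derivative,
\[
\norm{\Pcal\diver(u\otimes v)}_{H^{s-\alpha}}\le C\norm{u\otimes v}_{H^{r}},\qquad r=s+1-\alpha\le s .
\]
If $r\le 0$, bound $\norm{u\otimes v}_{H^r}\le\norm{u\otimes v}_{L^2}\le\norm{u}_{L^\infty}\norm{v}_{L^2}\le C\norm{u}_{H^s}\norm{v}_{H^\alpha}$, using $H^s\hookrightarrow L^\infty$ (as $s>\frac32$). So assume $r>0$. It then suffices to bound $\norm{u\otimes v}_{L^2}$ as just done and to bound $\norm{\Lambda^r(u\otimes v)}_{L^2}$.

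\emph{Step 2 (Kato--Ponce with Hölder exponents matched to $\alpha$).} Use the fractional Leibniz rule
\[
\norm{\Lambda^r(fg)}_{L^2}\le C\bigl(\norm{\Lambda^r f}_{L^{p}}\norm{g}_{L^{q}}+\norm{f}_{L^{q}}\norm{\Lambda^r g}_{L^{p}}\bigr),\qquad \tfrac1p+\tfrac1q=\tfrac12 ,
\]
with the choice $\frac1q=\frac{\alpha-1}{3}$ and $\frac1p=\frac12-\frac{\alpha-1}{3}$. This is admissible for $1<\alpha\le\frac52$; for $\alpha>\frac52$ take $q=\infty$, $p=2$ instead.
\begin{itemize}
\item By homogeneous Sobolev embedding, $\dot H^{s}\hookrightarrow\dot W^{r,p}$, because $s-r=\alpha-1=3\bigl(\frac12-\frac1p\bigr)$. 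Hence $\norm{\Lambda^r u}_{L^p}\le C\norm{u}_{H^s}$.
\item Similarly $\dot H^{\sigma}\hookrightarrow L^q$ with $\sigma=\frac32-\frac3q=\frac52-\alpha$.
\end{itemize}

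\emph{Step 3 (where $\alpha\ge\frac54$ enters).} The condition $\alpha\ge\frac54$ is exactly $0\le\sigma=\frac52-\alpha\le\alpha$. Interpolating between $L^2$ and $\dot H^\alpha$ then gives $\norm{v}_{L^q}\le C\norm{v}_{H^\alpha}$. In the case $\alpha>\frac52$ one uses $H^\alpha\hookrightarrow L^\infty$ directly, and there $r\le s$ handles the $L^2$ factor. Inserting these bounds into Step 2, and treating the symmetric term the same way, yields \eqref{eq:critical-product}. The diagonal bound \eqref{eq:critical-product-diagonal} follows by setting $v=u$.

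The main obstacle is this exponent bookkeeping. The pair $(p,q)$ must be chosen so that the low-regularity factor lands in $L^q$ at regularity at most $\alpha$; this is precisely where $\alpha<\frac54$ would fail. I also expect the endpoint $\alpha=\frac54$ to be delicate: there $\sigma=\alpha$, so the embedding is used at its sharp homogeneous endpoint, and $p$, $q$ must remain strictly inside $(2,\infty)$ for the Kato--Ponce inequality to apply.
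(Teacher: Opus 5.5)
Your argument is correct, and it takes a genuinely different route from the paper's. In fact it is the route the lemma needs.

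\textbf{How the paper argues, and why that step fails.} The paper does not use a Leibniz rule. It invokes a one-sided multiplication theorem, $\norm{fg}_{H^r}\le C\norm{f}_{H^a}\norm{g}_{H^b}$ for $r\le\min\{a,b,a+b-\frac32\}$, with $a=\alpha$, $b=s$, $r=s+1-\alpha$, and then symmetrizes. It checks only $r\le b$ and $r\le a+b-\frac32$. The remaining condition $r\le a$, i.e.\ $s\le 2\alpha-1$, fails for every $s>\frac52$ when $\frac54\le\alpha\le\frac74$, and more generally whenever $s>2\alpha-1$.

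In that regime the paper's intermediate bound $\norm{u\otimes v}_{H^{s+1-\alpha}}\le C\norm{u}_{H^\alpha}\norm{v}_{H^s}$ is false. Take $v$ smooth, divergence free, and equal to $e_1$ on a ball. Take $u$ a divergence-free wave packet at frequency $n$ supported in that ball. Then the left side grows like $n^{s+1-\alpha}$, while the right side grows like $n^{\alpha}$.

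\textbf{What your approach buys.} Your Kato--Ponce splitting avoids this failure. It puts all $r=s+1-\alpha$ derivatives on whichever factor is measured in $H^s$, and asks only for $L^q$ integrability of the other factor. This is why the symmetric right-hand side of the lemma is essential rather than cosmetic. Your choice $\frac1q=\frac{\alpha-1}{3}$ also shows transparently that $\alpha\ge\frac54$ is exactly the condition $\sigma=\frac52-\alpha\le\alpha$.

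Your insistence on inhomogeneous norms is necessary. The paper's own definition $\norm{u}_{H^s}:=\norm{\Lambda^s u}_{L^2}$ is homogeneous, and your scaling computation shows the estimate would then fail for $\alpha>\frac54$. The inhomogeneous reading is also the one the paper actually uses in the proof of \cref{thm:lions}, through the weights $(1+|\xi|^2)^s$ and the bound on $\int_0^T\norm{u}_{H^\alpha}^2$.

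\textbf{Two bookkeeping corrections.}
\begin{enumerate}[label=(\roman*)]
\item Your first exponent choice is not admissible at $\alpha=\frac52$ itself. There $p=\infty$, and $\dot H^{s}\hookrightarrow\dot W^{r,\infty}$ with $s-r=\frac32$ fails. This is harmless: the alternative $q=\infty$, $p=2$ works for every $\alpha>\frac32$, by $H^\alpha\hookrightarrow L^\infty$ and $r\le s$. So simply use it for all $\alpha\ge\frac52$.
\item The endpoint $\alpha=\frac54$ is not delicate. There $(p,q)=(\frac{12}{5},12)$ lies strictly inside $(2,\infty)$, and $\dot H^{5/4}\hookrightarrow L^{12}$ is an ordinary non-endpoint Sobolev embedding. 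The equality $\sigma=\alpha$ only means the interpolation between $L^2$ and $\dot H^\alpha$ is trivial.
\end{enumerate}
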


\begin{proof}
We use the standard Sobolev multiplication theorem on $\R^3$: if $a,b\ge0$ and $r\le \min\{a,b,a+b-\frac32\}$, then
\[
\norm{fg}_{H^r}\le C\norm{f}_{H^a}\norm{g}_{H^b}.
\]
Apply this with $f=u$, $g=v$, $a=\alpha$, $b=s$, and
\[
r=s+1-\alpha.
\]
Because $s>\frac52$ and $\alpha\ge \frac54$, we have
\[
s+1-\alpha\le s,
\qquad
s+1-\alpha\le \alpha+s-\frac32
\iff 2\alpha\ge \frac52.
\]
Hence
\[
\norm{u\otimes v}_{H^{s+1-\alpha}}
\le C\norm{u}_{H^\alpha}\norm{v}_{H^s}.
\]
Interchanging $u$ and $v$ gives the symmetric companion estimate, and the boundedness of $\Pcal\diver: H^{s+1-\alpha}(\R^3)\to H^{s-\alpha}(\R^3)$ yields \eqref{eq:critical-product}. The diagonal estimate \eqref{eq:critical-product-diagonal} follows immediately.
\end{proof}

\begin{theorem}[Global strong solutions at and above the Lions exponent]\label{thm:lions}
Assume that $m\in\mathfrak M_\alpha$ with $\alpha\ge \frac54$ and let $s>\frac52$. Then every initial datum $u_0\in H^s_\sigma(\R^3)$ generates a unique global strong solution of \eqref{eq:main}.
\end{theorem}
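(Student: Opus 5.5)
The plan is a continuation argument built on the local theory of \cref{thm:local} and an a priori $H^s$ bound on any finite interval. By \cref{thm:local} there is a unique strong solution on a maximal interval $[0,T^*)$. Because the local existence time depends only on $\norm{u_0}_{H^s}$, it suffices to show that if $T^*<\infty$ then $\sup_{t<T^*}\norm{u(t)}_{H^s}<\infty$. Restarting from a time close to $T^*$ would then extend the solution past $T^*$, a contradiction. Uniqueness on every compact interval follows from the contraction argument of \cref{thm:local}.

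The a priori bound uses two ingredients. The first is the energy identity of \cref{thm:energy}, which is valid for strong solutions since $s>\frac52$ makes them regular enough to justify the pairing. Together with \eqref{eq:coercive-equivalence} it gives
\[
\norm{u(t)}_{L^2}^2+2c_0\int_0^t\norm{\Lambda^\alpha u(\tau)}_{L^2}^2\dd\tau\le\norm{u_0}_{L^2}^2 .
\]
Hence $\int_0^{T^*}\norm{u}_{H^\alpha}^2\dd\tau<\infty$, where $\norm{u}_{H^\alpha}$ means the inhomogeneous norm, controlled by the $L^2$ bound plus the dissipation. The second ingredient is the $H^s$ energy estimate. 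Apply $\Lambda^s$ to \eqref{eq:main} (if needed, add the $L^2$ estimate to handle the inhomogeneous norm) and pair with $\Lambda^s u$ exactly as in the proof of \cref{lem:linear}. Duality between $H^{s-\alpha}$ and $H^{s+\alpha}$ together with \eqref{eq:critical-product-diagonal} from \cref{lem:critical-product} give
\[
\abs{\ip{\Lambda^s\Pcal\diver(u\otimes u)}{\Lambda^s u}}\le C\norm{u}_{H^\alpha}\norm{u}_{H^s}\norm{u}_{H^{s+\alpha}}\le \frac{c_0}{2}\norm{u}_{H^{s+\alpha}}^2+C'\norm{u}_{H^\alpha}^2\norm{u}_{H^s}^2 .
\]
Absorbing the first term into the coercive dissipation $\int m\abs{\widehat{\Lambda^s u}}^2\ge c_0\norm{\Lambda^{s+\alpha}u}^2$ leaves
\[
\frac{\dd}{\dd t}\norm{u}_{H^s}^2\le 2C'\norm{u}_{H^\alpha}^2\norm{u}_{H^s}^2 .
\]
Gronwall's inequality then yields $\norm{u(t)}_{H^s}^2\le\norm{u_0}_{H^s}^2\exp\bigl(C\norm{u_0}_{L^2}^2/c_0\bigr)$, uniformly in $t<T^*$. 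This is exactly where $\alpha\ge\frac54$ enters: it is what makes \cref{lem:critical-product} available, so the Gronwall weight is integrable thanks to the energy dissipation.

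The main obstacle is the rigor of the $H^s$ energy computation at the top regularity level. The pairing $\ip{\Lambda^s\partial_t u}{\Lambda^s u}$ needs justification, since the solution is only in $C([0,T];H^s)\cap L^2(0,T;H^{s+\alpha})$ with $\partial_t u\in L^2(0,T;H^{s-\alpha})$. I would handle this with the Lions--Magenes lemma for the Gelfand triple $H^{s+\alpha}\subset H^s\subset H^{s-\alpha}$, which gives absolute continuity of $\norm{u}_{H^s}^2$ and the identity $\frac{\dd}{\dd t}\norm{u}_{H^s}^2=2\ip{\partial_t u}{u}_{H^s}$. Alternatively I would mollify with a Friedrichs cutoff and pass to the limit. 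A secondary point to check is the homogeneous-versus-inhomogeneous norm mismatch in \cref{lem:critical-product}. This is harmless, because the $L^2$ norm is bounded by the energy inequality.
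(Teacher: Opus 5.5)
Your proposal follows the paper's proof essentially step for step: the maximal local solution from \cref{thm:local}, $L^2_tH^\alpha$ control from the energy identity, the $H^s$ estimate closed by duality and the diagonal bound \eqref{eq:critical-product-diagonal}, then Gronwall and a restart near $T^*$. Your Lions--Magenes justification of the top-order pairing is a rigor point the paper skips. The only slip is in your explicit Gronwall constant: with the inhomogeneous $H^\alpha$ norm, $\int_0^{t}\norm{u}_{H^\alpha}^2\,\dd\tau$ also contains $t\norm{u_0}_{L^2}^2$, so the exponent must carry a $T^*$-dependent term, as in the paper's $\exp(CT+\cdots)$. This is harmless, because $T^*<\infty$ is exactly the contradiction hypothesis.
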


\begin{proof}
Let $u$ denote the unique strong solution supplied by \cref{thm:local} on its maximal interval of existence $[0,T_*)$, where $T_*\in(0,\infty]$. We shall prove that $T_*=\infty$.

\smallskip
\noindent\textbf{Step 1: high-order energy identity.}
Apply $\Lambda^s$ to \eqref{eq:main} and take the $L^2$ inner product with $\Lambda^s u$. Since $\Pcal$ is selfadjoint on divergence-free fields and commutes with Fourier multipliers,
\begin{equation}\label{eq:Hs-energy-start}
\frac12\frac{\dd}{\dd t}\norm{u}_{H^s}^2
+\nu\norm{\nabla u}_{H^s}^2
+\int_{\R^3}(1+|\xi|^2)^s m(\xi)|\hat u(\xi)|^2\,\dd\xi
=-\ip{\Lambda^s \Pcal\diver(u\otimes u)}{\Lambda^s u}.
\end{equation}
Using the lower bound in \eqref{eq:symbol-growth}, we obtain
\begin{align}
\int_{\R^3}(1+|\xi|^2)^s m(\xi)|\hat u(\xi)|^2\,\dd\xi
&\ge c_0\int_{\R^3}(1+|\xi|^2)^s|\xi|^{2\alpha}|\hat u(\xi)|^2\,\dd\xi \notag\\
&\ge c\norm{u}_{H^{s+\alpha}}^2-C\norm{u}_{H^s}^2,
\label{eq:Hs-diss-lower}
\end{align}
where the last step is the elementary low-frequency comparison
\[
(1+|\xi|^2)^s|\xi|^{2\alpha}\ge c(1+|\xi|^2)^{s+\alpha}-C(1+|\xi|^2)^s.
\]

\smallskip
\noindent\textbf{Step 2: estimate of the nonlinear term.}
Rewrite the right-hand side of \eqref{eq:Hs-energy-start} by shifting $\alpha$ derivatives:
\[
\bigl|\ip{\Lambda^s \Pcal\diver(u\otimes u)}{\Lambda^s u}\bigr|
=
\bigl|\ip{\Lambda^{s-\alpha}\Pcal\diver(u\otimes u)}{\Lambda^{s+\alpha}u}\bigr|.
\]
Therefore, by \cref{lem:critical-product},
\begin{align}
\bigl|\ip{\Lambda^s \Pcal\diver(u\otimes u)}{\Lambda^s u}\bigr|
&\le C\norm{\Pcal\diver(u\otimes u)}_{H^{s-\alpha}}\norm{u}_{H^{s+\alpha}} \notag\\
&\le C\norm{u}_{H^\alpha}\norm{u}_{H^s}\norm{u}_{H^{s+\alpha}} \notag\\
&\le \frac{c}{2}\norm{u}_{H^{s+\alpha}}^2 + C\norm{u}_{H^\alpha}^2\norm{u}_{H^s}^2.
\label{eq:nonlinear-Hs-est}
\end{align}
Combining \eqref{eq:Hs-energy-start}, \eqref{eq:Hs-diss-lower}, and \eqref{eq:nonlinear-Hs-est}, and absorbing the $\frac c2\norm{u}_{H^{s+\alpha}}^2$ term into the left-hand side, we arrive at
\begin{equation}\label{eq:Hs-differential}
\frac{\dd}{\dd t}\norm{u}_{H^s}^2 + c\norm{u}_{H^{s+\alpha}}^2
\le C\bigl(1+\norm{u}_{H^\alpha}^2\bigr)\norm{u}_{H^s}^2.
\end{equation}

\smallskip
\noindent\textbf{Step 3: control of the coefficient by the basic energy law.}
Since the strong solution satisfies the exact $L^2$ energy identity from \cref{thm:main}(i),
\begin{equation}\label{eq:lions-basic-energy}
\sup_{0\le t<T}\norm{u(t)}_{L^2}^2
+2\nu\int_0^T\norm{\nabla u(t)}_{L^2}^2\,\dd t
+2\int_0^T\ip{Mu(t)}{u(t)}\,\dd t
\le \norm{u_0}_{L^2}^2
\end{equation}
for every $T<T_*$. By \eqref{eq:coercive-equivalence},
\[
\int_0^T \norm{\Lambda^\alpha u(t)}_{L^2}^2\,\dd t
\le C\norm{u_0}_{L^2}^2.
\]
Consequently,
\begin{equation}\label{eq:alpha-time-integrable}
\int_0^T \norm{u(t)}_{H^\alpha}^2\,\dd t
\le C\Bigl(T\norm{u_0}_{L^2}^2+\norm{u_0}_{L^2}^2\Bigr)
<\infty
\qquad\text{for every }T<T_*.
\end{equation}
Thus the coefficient on the right-hand side of \eqref{eq:Hs-differential} is integrable on every finite subinterval of $[0,T_*)$.

\smallskip
\noindent\textbf{Step 4: Gronwall bound and continuation.}
Fix $T<T_*$. Integrating \eqref{eq:Hs-differential} and applying Gronwall's inequality gives
\begin{equation}\label{eq:Hs-global-bound}
\sup_{0\le t\le T}\norm{u(t)}_{H^s}^2
\le
\norm{u_0}_{H^s}^2
\exp\Bigl(C T + C\int_0^T\norm{u(t)}_{H^\alpha}^2\,\dd t\Bigr).
\end{equation}
Using \eqref{eq:alpha-time-integrable}, the right-hand side is finite and depends only on $T$, $\norm{u_0}_{L^2}$, and $\norm{u_0}_{H^s}$. In particular,
\[
\sup_{0\le t<T}\norm{u(t)}_{H^s}<\infty
\qquad	ext{for every finite }T<T_*.
\]
Moreover, integrating \eqref{eq:Hs-differential} once more yields
\[
\int_0^T\norm{u(t)}_{H^{s+\alpha}}^2\,\dd t<\infty.
\]

Suppose for contradiction that $T_*<\infty$. Choose $t_0\in(0,T_*)$ close enough to $T_*$ that
\[
\norm{u(t_0)}_{H^s}\le 2\sup_{0\le t<T_*}\norm{u(t)}_{H^s}<\infty.
\]
By \cref{thm:local}, the lifespan of a strong solution launched from the datum $u(t_0)$ depends only on its $H^s$ norm. Hence there exists $\tau>0$, depending only on the preceding uniform bound, such that the solution starting from $u(t_0)$ extends uniquely to $[t_0,t_0+\tau]$. Taking $t_0$ so close to $T_*$ that $t_0+\tau>T_*$ contradicts the maximality of $T_*$. Therefore $T_*=\infty$.

Uniqueness is already part of the local theory and therefore carries over to the global solution.
\end{proof}

\begin{remark}
\Cref{thm:lions} is intentionally conservative. It does not claim a new route around the three-dimensional Navier--Stokes problem. Rather, it isolates a class of nonlocal perturbations for which the equation enters the already understood hyperdissipative regime. The threshold remains the classical one: $\alpha=\frac54$.
\end{remark}

\subsection{Global strong solutions for sufficiently small data}

The previous theorem covers arbitrary data only when $\alpha\ge \frac54$. The next result shows that for every $\alpha>1$ one can still obtain global strong solutions under a smallness condition.

\begin{theorem}[Global small-data theorem]\label{thm:smalldata}
Assume that $m\in\mathfrak M_\alpha$ with $\alpha>1$, let $s>\frac52$, and define
\[
X_\infty
:=
L^\infty(0,\infty;H^s_\sigma(\R^3))
\cap
L^2(0,\infty;H^{s+\alpha}_\sigma(\R^3))
\]
with norm
\[
\norm{u}_{X_\infty}
=
\norm{u}_{L^\infty(0,\infty;H^s)}
+
\norm{u}_{L^2(0,\infty;H^{s+\alpha})}.
\]
Then there exists $\varepsilon=\varepsilon(\nu,c_0,c_1,s,\alpha)>0$ such that, whenever
\[
\norm{u_0}_{H^s}\le \varepsilon,
\]
the problem \eqref{eq:main} admits a unique global strong solution $u\in X_\infty$.
\end{theorem}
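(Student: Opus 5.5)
The plan is a direct Banach fixed-point argument on the whole half-line in the space $X_\infty$ of the statement. It reuses the Duhamel map $\Phi$ from the proof of \cref{thm:local}, but replaces the factor $T^{1/2}$ by a genuinely global bilinear bound. For $u\in X_\infty$, let $\Phi(u)=v$ be the solution of
\[
\partial_t v+\nu(-\Delta)v+Mv=-\Pcal\diver(u\otimes u),\qquad v(0)=u_0,
\]
exactly as in \eqref{eq:fixedpoint}.

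\textbf{Step 1: linear estimate on $[0,\infty)$.} The constant $C$ in \cref{lem:linear} depends only on $\nu,c_0,c_1$ and not on $T$. With the paper's convention $\norm{u}_{H^s}=\norm{\Lambda^s u}_{L^2}$, letting $T\to\infty$ in \eqref{eq:linear-est} gives
\[
\norm{v}_{X_\infty}\le C_0\bigl(\norm{v_0}_{H^s}+\norm{F}_{L^2(0,\infty;H^{s-\alpha})}\bigr).
\]

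\textbf{Step 2: time-global bilinear bound.} In \cref{thm:local} we used the weak form \eqref{eq:bilinear-local}, which forces a factor $T^{1/2}$. Here I would instead use the sharper estimate \eqref{eq:bilinear} of \cref{lem:bilinear}, putting $L^\infty_t$ on one factor and $L^2_t$ on the other:
\[
\norm{\Pcal\diver(u\otimes w)}_{L^2(0,\infty;H^{s-\alpha})}
\le C\norm{u}_{L^\infty(0,\infty;H^s)}\norm{w}_{L^2(0,\infty;H^{s+\alpha})}
\le C\norm{u}_{X_\infty}\norm{w}_{X_\infty}.
\]
Combining this with Step 1 gives
\[
\norm{\Phi(u)}_{X_\infty}\le C_0\norm{u_0}_{H^s}+C_2\norm{u}_{X_\infty}^2 .
\]
For the difference I use the splitting $u\otimes u-w\otimes w=(u-w)\otimes u+w\otimes(u-w)$. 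In each term the $H^{s+\alpha}$ norm is placed on whichever factor is convenient, which gives
\[
\norm{\Phi(u)-\Phi(w)}_{X_\infty}\le C_3\bigl(\norm{u}_{X_\infty}+\norm{w}_{X_\infty}\bigr)\norm{u-w}_{X_\infty}.
\]

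\textbf{Step 3: contraction.} Set $R=2C_0\varepsilon$ and choose $\varepsilon$ small enough that $C_2R\le\frac12$ and $2C_3R<1$. Then $\Phi$ maps the closed ball $\{\norm{u}_{X_\infty}\le R\}$ into itself and is a contraction there. This produces a fixed point $u\in X_\infty$, and $\varepsilon$ depends only on $\nu,c_0,c_1,s,\alpha$ through $C_0,C_2,C_3$.

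\textbf{Step 4: strong solution and uniqueness.} Continuity $u\in C([0,\infty);H^s)$ follows on each finite interval by the same argument as at the end of \cref{thm:local}, since $\partial_t u\in L^2_{\mathrm{loc}}H^{s-\alpha}$ and the equation is parabolic. Uniqueness holds within the ball by the contraction. For uniqueness among all strong solutions in $X_\infty$, I would note that any two strong solutions with the same data agree on $[0,T]$ for every $T$, by the local uniqueness in \cref{thm:local} applied repeatedly. The restriction of any $X_\infty$ solution to $[0,T]$ is an $X_T$ solution, so the two solutions coincide on all of $[0,\infty)$.

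The main obstacle is Step 2, namely making the nonlinear estimate uniform on an infinite time interval. The bound \eqref{eq:bilinear-local} from the local theory is useless there because of its $T^{1/2}$ factor. The key is to exploit the time-integrated dissipation in $L^2(0,\infty;H^{s+\alpha})$ through \eqref{eq:bilinear}. This in turn needs the $T$-independence of the constant in \cref{lem:linear}, which comes from the homogeneous form of the dissipation lower bound $c_0|\xi|^{2\alpha}$.
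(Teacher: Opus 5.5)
Your proposal follows the paper's proof essentially line by line. It also inherits a genuine gap that the paper's proof has: Steps 1 and 2 are valid for \emph{different} meanings of $H^r$, so they cannot be chained.

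\emph{Homogeneous norms.} Step 1 needs the homogeneous seminorms $\norm{\Lambda^r\cdot}_{L^2}$; as you note, only then does $c_0|\xi|^{2\alpha}$ give a $T$-independent constant. But \eqref{eq:bilinear} is false in homogeneous norms. Replacing $u,v$ by $u(\lambda\cdot),v(\lambda\cdot)$ multiplies the left side by $\lambda^{s-\alpha-\frac12}$ and the right side by $\lambda^{2s+\alpha-3}$. Since $s+2\alpha-\frac52>0$, the inequality breaks as $\lambda\to0$. Dilating only $v$ shows it fails even if the $L^\infty_t$ factor carries the inhomogeneous norm. The proof of \cref{lem:bilinear} relies on the algebra property of inhomogeneous $H^s$ and on $H^{s+\alpha}\hookrightarrow H^{s+1-\alpha}$, and both fail for homogeneous spaces.

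\emph{Inhomogeneous norms.} Here \eqref{eq:bilinear} is fine, but Step 1 fails on $(0,\infty)$. The dissipation only gives $c\norm{v}_{H^{s+\alpha}}^2-C\norm{v}_{H^s}^2$, as in \eqref{eq:Hs-diss-lower}, and the error term is not integrable in time. Concretely, for $m=c_0|\xi|^{2\alpha}$ and $F=0$,
\[
\int_0^\infty\norm{v(t)}_{L^2}^2\dd t=\int_{\R^3}\frac{|\hat v_0(\xi)|^2}{2\nu|\xi|^2+2c_0|\xi|^{2\alpha}}\dd\xi=\infty
\qquad\text{if } |\hat v_0(\xi)|\approx|\xi|^{-1/2}\text{ near } 0,
\]
however small $\norm{v_0}_{H^s}$ is. A short Duhamel argument shows the nonlinear solution is not in $L^2(0,\infty;L^2)$ either, because $\widehat{\Pcal\diver(u\otimes u)}$ is $O(|\xi|\norm{u}_{L^2}^2)$ at low frequency. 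So in this reading the conclusion $u\in X_\infty$ is itself false.

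\emph{No better product estimate helps.} In homogeneous norms a contraction would give $\sup_t\norm{\Lambda^s u(t)}_{L^2}\le 2C_0\norm{\Lambda^s u_0}_{L^2}$ for $m=\mu|\xi|^{2\alpha}$. The rescaling $u_0=\delta v_0(\delta\cdot)$, $\delta\to0$, would turn this into $\delta$-uniform $H^s$ bounds for Navier--Stokes with hyperviscosity $\mu\delta^{2\alpha-2}(-\Delta)^\alpha$ and \emph{arbitrary} datum $v_0$. That would give global regularity for the classical equations.

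\emph{A version that closes.} Keep only the $L^\infty_t$ part inhomogeneous and measure dissipation homogeneously through the Laplacian: set $\norm{u}_Y:=\norm{u}_{L^\infty(0,\infty;H^s)}+\norm{\nabla u}_{L^2(0,\infty;H^s)}$. The $L^2$ and $\dot H^s$ energy estimates give, with time-independent constants,
$\norm{v}_Y\le C_\nu\bigl(\norm{v_0}_{H^s}+\norm{F}_{L^2(0,\infty;\dot H^{-1})}+\norm{F}_{L^2(0,\infty;\dot H^{s-1})}\bigr)$.
For $F=\Pcal\diver(u\otimes w)$ one has $\norm{F}_{\dot H^{-1}}\le\norm{u}_{L^3}\norm{w}_{L^6}\lesssim\norm{u}_{H^s}\norm{\nabla w}_{L^2}$. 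One also has $\norm{F}_{\dot H^{s-1}}\lesssim\norm{u}_{L^\infty}\norm{w}_{\dot H^s}+\norm{w}_{L^\infty}\norm{u}_{\dot H^s}$, with $\norm{w}_{L^\infty}+\norm{w}_{\dot H^s}\lesssim\norm{\nabla w}_{H^s}$ since $1<\frac32<s+1$. Hence $\norm{\Phi(u)}_Y\le C_0\norm{u_0}_{H^s}+C_2\norm{u}_Y^2$, the difference bound matches, and your Step 3 goes through.

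The bound $\Lambda^{s+\alpha}u\in L^2(0,\infty;L^2)$ then follows a posteriori from the $\dot H^s$ energy identity, whose nonlinear term is integrable in time. The theorem must be restated with this homogeneous dissipation class in place of $L^2(0,\infty;H^{s+\alpha})$. Note also that the small-data mechanism is the Fujita--Kato one driven by $\nu\Delta$; the hyperdissipation is not needed for it.
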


\begin{proof}
We use a contraction argument on the whole half-line. For $u\in X_\infty$, let $\Phi(u)=v$ solve \eqref{eq:fixedpoint} on $(0,\infty)$. By \cref{lem:linear},
\[
\norm{\Phi(u)}_{X_\infty}
\le
C_0 \norm{u_0}_{H^s}
+
C_1 \norm{\Pcal\diver(u\otimes u)}_{L^2(0,\infty;H^{s-\alpha})}.
\]
Now apply the sharper bilinear estimate \eqref{eq:bilinear}:
\[
\norm{\Pcal\diver(u\otimes u)}_{L^2(0,\infty;H^{s-\alpha})}
\le
C \norm{u}_{L^\infty(0,\infty;H^s)} \norm{u}_{L^2(0,\infty;H^{s+\alpha})}
\le
C \norm{u}_{X_\infty}^2.
\]
Hence
\begin{equation}\label{eq:global-map}
\norm{\Phi(u)}_{X_\infty}
\le
C_0 \norm{u_0}_{H^s}
+
C_2 \norm{u}_{X_\infty}^2.
\end{equation}
Similarly, for $u,v\in X_\infty$,
\[
\norm{\Phi(u)-\Phi(v)}_{X_\infty}
\le
C_3 \bigl(\norm{u}_{X_\infty}+\norm{v}_{X_\infty}\bigr)\norm{u-v}_{X_\infty}.
\]
Choose $R=2C_0\norm{u_0}_{H^s}$ and impose the smallness condition
\[
4 C_0 C_2 \norm{u_0}_{H^s} \le \frac12,
\qquad
4 C_0 C_3 \norm{u_0}_{H^s} < 1.
\]
Then $\Phi$ maps the closed ball $\{u\in X_\infty:\norm{u}_{X_\infty}\le R\}$ into itself and is a contraction there. Banach's theorem gives a unique global solution in $X_\infty$.
\end{proof}

\begin{corollary}
If $\alpha\ge \frac54$, then \cref{thm:lions} supersedes \cref{thm:smalldata}; when $1<\alpha<\frac54$, \cref{thm:smalldata} provides a global strong regime below the large-data threshold.
\end{corollary}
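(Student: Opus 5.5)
The corollary has two clauses, and I will treat each as a direct comparison of the hypotheses and conclusions of \cref{thm:lions} and \cref{thm:smalldata}. No new analytic estimate is needed.

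\textbf{The range $\alpha\ge\frac54$.} Fix $s>\frac52$ and let $u_0\in H^s_\sigma(\R^3)$ satisfy $\norm{u_0}_{H^s}\le\varepsilon$. Such a datum is in particular an arbitrary element of $H^s_\sigma$, so \cref{thm:lions} supplies a unique global strong solution. Its uniqueness is inherited from \cref{thm:local}. The solution built by \cref{thm:smalldata} is a fixed point in $X_\infty$, so its restriction to every $[0,T]$ lies in $C([0,T];H^s)\cap L^2(0,T;H^{s+\alpha})$. By the uniqueness statement of \cref{thm:local}, the two solutions coincide on every finite interval. Hence the small-data theorem produces nothing new in this range; \cref{thm:lions} removes the smallness hypothesis altogether. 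This is the sense of ``supersedes''.

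One point needs care. Membership in the space $X_\infty$, which carries uniform-in-time bounds, is an extra piece of information from \cref{thm:smalldata} that \cref{thm:lions} does not claim. The Gronwall bound \eqref{eq:Hs-global-bound} grows in $T$. I would therefore read ``supersedes'' as referring to existence and uniqueness of global strong solutions, and state this explicitly.

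\textbf{The range $1<\alpha<\frac54$.} Here \cref{thm:lions} is unavailable. Its proof relies on \cref{lem:critical-product}, whose condition $s+1-\alpha\le \alpha+s-\frac32$ is equivalent to $\alpha\ge\frac54$. By contrast, \cref{thm:smalldata} holds for every $\alpha>1$, so it still yields a nonempty open set of data, namely the $\varepsilon$-ball in $H^s_\sigma$, that generate global strong solutions. This is the global regime below the threshold, and the second clause follows.

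The only step needing attention is the identification of the two solutions in the first clause. I will obtain it from the uniqueness part of \cref{thm:local}, applied on each finite interval.
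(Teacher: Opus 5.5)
Your proposal is correct and matches the paper's implicit reasoning: the paper gives no separate proof and reads the corollary off from the hypotheses of \cref{thm:lions} (arbitrary data, $\alpha\ge\frac54$) and \cref{thm:smalldata} (small data, every $\alpha>1$). Your caveat that \cref{thm:lions} does not recover the uniform-in-time $X_\infty$ bounds is a fair sharpening of ``supersedes''. The one detail you gloss is that the $X_\infty$ fixed point lies in $C([0,T];H^s)$, which follows from $u\in L^2(0,T;H^{s+\alpha})$ and $\partial_t u\in L^2(0,T;H^{s-\alpha})$; after that, the uniqueness asserted in \cref{thm:lions} identifies the two solutions directly, with no need to patch together the short-time uniqueness of \cref{thm:local}.
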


\section{Vanishing hyperdissipation, Galerkin truncation, and the Navier--Stokes limit}\label{sec:limit}

In this section we return to the motivating question behind the present paper: can one choose the coefficient of the additional nonlocal term so small that the modified problem approaches the original three-dimensional Navier--Stokes system, while still retaining enough of the hyperdissipative mechanism to deduce global smooth solvability for the approximants? If so, does the corresponding approximation scheme shed light on the unresolved large-data problem for the classical equation? The answer is twofold. For every fixed regularization parameter the approximation is globally smooth in the Lions regime, but the estimates that yield this global smoothness are not uniform as the parameter tends to zero. Consequently, the approximation scheme is analytically valuable as a regularization and diagnostic tool, but it does not by itself close the Millennium problem.

The discussion below makes this precise and links it to the modern literature on hyperdissipation, near-critical regularization, averaged models, and blow-up scenarios. The large-data global regularity theorem proved in \cref{thm:lions} is the fixed-parameter anchor for the argument.

\subsection{The vanishing-hyperdissipation family}

Fix $\alpha>1$ and let $M$ be as in \cref{sec:model}. For $\varepsilon>0$ consider the family
\begin{equation}\label{eq:eps-main}
\partial_t u^\varepsilon + \Pcal\diver(u^\varepsilon\otimes u^\varepsilon)
+\nabla p^\varepsilon
= \nu\Delta u^\varepsilon - \varepsilon M u^\varepsilon,
\qquad \diver u^\varepsilon=0,
\qquad u^\varepsilon|_{t=0}=u_0.
\end{equation}
In the special case $m(\xi)=|\xi|^{2\alpha}$ this is the classical hyperdissipative Navier--Stokes approximation to the standard system. More generally, the symbol assumptions of \cref{sec:model} allow anisotropic and genuinely nonlocal convolution realizations. Equation \eqref{eq:eps-main} differs from the exact three-dimensional Navier--Stokes equations only through the additional dissipative term $-\varepsilon M u^\varepsilon$.

For numerical and finite-dimensional approximation purposes it is natural to combine \eqref{eq:eps-main} with a Galerkin truncation. Let $P_{\le N}$ denote the Fourier projector onto $\{\xi\in\R^3:|\xi|\le N\}$ and consider
\begin{equation}\label{eq:epsN-main}
\partial_t u^{\varepsilon,N}
+P_{\le N}\Pcal\diver(u^{\varepsilon,N}\otimes u^{\varepsilon,N})
=\nu\Delta u^{\varepsilon,N}-\varepsilon M u^{\varepsilon,N},
\qquad \diver u^{\varepsilon,N}=0,
\qquad u^{\varepsilon,N}|_{t=0}=P_{\le N}u_0.
\end{equation}
Since the dynamics is finite-dimensional, \eqref{eq:epsN-main} has a global smooth solution for each fixed pair $(\varepsilon,N)$. The issue is not existence at the truncated level, but whether any estimate obtained there survives in the simultaneous limits $N\to\infty$ and $\varepsilon\downarrow0$.

\subsection{Uniform energy bounds and crossover frequency}

The first level of control survives the vanishing-parameter limit.

\begin{proposition}[Energy identity for the regularized family]\label{prop:eps-energy}
Let $u_0\in L^2_\sigma(\R^3)$ and let $u^\varepsilon$ be a smooth solution of \eqref{eq:eps-main} on $[0,T]$. Then
\begin{equation}\label{eq:eps-energy}
\frac12\frac{\dd}{\dd t}\norm{u^\varepsilon(t)}_{L^2}^2
+\nu\norm{\nabla u^\varepsilon(t)}_{L^2}^2
+\varepsilon\ip{M u^\varepsilon(t)}{u^\varepsilon(t)}=0.
\end{equation}
In particular, using \eqref{eq:coercive-equivalence},
\begin{equation}\label{eq:eps-energy-int}
\sup_{0\le t\le T}\norm{u^\varepsilon(t)}_{L^2}^2
+2\nu\int_0^T\norm{\nabla u^\varepsilon(t)}_{L^2}^2\,\dd t
+2c\varepsilon\int_0^T\norm{\Lambda^\alpha u^\varepsilon(t)}_{L^2}^2\,\dd t
\le \norm{u_0}_{L^2}^2.
\end{equation}
The same identity holds for $u^{\varepsilon,N}$ with $u^\varepsilon$ replaced by $u^{\varepsilon,N}$.
\end{proposition}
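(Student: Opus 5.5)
The argument repeats the energy computation of \cref{thm:energy}, now with the coefficient $\varepsilon$ in front of $M$. I would take the $L^2$ inner product of \eqref{eq:eps-main} with $u^\varepsilon(t)$ and treat the terms in order.

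\emph{Time derivative.} This term gives $\frac12\frac{\dd}{\dd t}\norm{u^\varepsilon}_{L^2}^2$.

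\emph{Pressure.} Since $\diver u^\varepsilon=0$, integration by parts gives $\ip{\nabla p^\varepsilon}{u^\varepsilon}=0$.

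\emph{Nonlinearity.} Because $\Pcal$ is selfadjoint and $\Pcal u^\varepsilon=u^\varepsilon$, we have
\[
\ip{\Pcal\diver(u^\varepsilon\otimes u^\varepsilon)}{u^\varepsilon}=\frac12\int_{\R^3}u^\varepsilon\cdot\nabla\abs{u^\varepsilon}^2\dd x=0.
\]

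\emph{Dissipation.} The viscous term gives $\nu\norm{\nabla u^\varepsilon}_{L^2}^2$. By Plancherel, the nonlocal term gives $\varepsilon\int_{\R^3} m(\xi)\abs{\hat u^\varepsilon}^2\dd\xi=\varepsilon\ip{Mu^\varepsilon}{u^\varepsilon}$. Collecting all terms yields \eqref{eq:eps-energy}.

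For \eqref{eq:eps-energy-int}, I would integrate \eqref{eq:eps-energy} from $0$ to $t\le T$. The lower bound in \eqref{eq:coercive-equivalence} gives $\varepsilon\ip{Mu^\varepsilon}{u^\varepsilon}\ge c_0\varepsilon\norm{\Lambda^\alpha u^\varepsilon}_{L^2}^2$, so we may take $c=c_0$. Every dissipative term is nonnegative, so the left-hand side at each fixed $t$ is bounded by $\norm{u_0}_{L^2}^2$.

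The integrals over $[0,t]$ are nondecreasing in $t$. Hence $\sup_t\norm{u^\varepsilon(t)}^2$ plus the integrals over $[0,T]$ is bounded by $\norm{u_0}_{L^2}^2$ plus the same integrals, and this naive combination only gives the bound $2\norm{u_0}_{L^2}^2$. To obtain the stated constant, I would take the supremum of the full identity at each time $t$ and evaluate the integrals at $T$. More carefully, I would either interpret the stated inequality in the form used in \eqref{eq:galerkin-bound}, or note that a constant factor $2$ is harmless. This bookkeeping is the only delicate point.

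For the truncated system \eqref{eq:epsN-main}, the only change is the projector in the nonlinearity. Here $u^{\varepsilon,N}$ has Fourier support in $\{|\xi|\le N\}$, since the data and every term of the equation preserve that support, so $P_{\le N}u^{\varepsilon,N}=u^{\varepsilon,N}$. The projector $P_{\le N}$ is selfadjoint and commutes with $\Pcal$. Therefore
\[
\ip{P_{\le N}\Pcal\diver(u^{\varepsilon,N}\otimes u^{\varepsilon,N})}{u^{\varepsilon,N}}=\ip{\diver(u^{\varepsilon,N}\otimes u^{\varepsilon,N})}{u^{\varepsilon,N}}=0.
\]
The remaining terms are handled exactly as before, with initial energy $\norm{P_{\le N}u_0}_{L^2}\le\norm{u_0}_{L^2}$.

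The main obstacle, which is mild, is justifying the vanishing of the cubic term. For smooth solutions in $H^s$ with $s>\frac52$, the product $u\cdot\nabla\abs{u}^2$ lies in $L^1$ and $\abs{u}^2u$ decays well enough at infinity, so integration by parts is legitimate. In the truncated case the functions are band-limited and therefore smooth, so the same justification applies.
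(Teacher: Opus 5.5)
Your derivation of \eqref{eq:eps-energy} is the paper's argument. You pair with $u^\varepsilon$, kill the nonlinearity by orthogonality, and read off $\varepsilon\ip{Mu^\varepsilon}{u^\varepsilon}$ by Plancherel. For \eqref{eq:epsN-main} you use that $P_{\le N}$ is selfadjoint and commutes with $\Pcal$. Here you are slightly more careful than the paper: you note explicitly that $P_{\le N}u^{\varepsilon,N}=u^{\varepsilon,N}$, which the paper's one-line argument tacitly needs for the projected cubic term to vanish.

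The gap is your step for recovering the constant in \eqref{eq:eps-energy-int}: ``take the supremum of the identity at each time and evaluate the integrals at $T$.'' That cannot work, because the display is false as written for every $u_0\neq0$. By \eqref{eq:eps-energy}, $t\mapsto\norm{u^\varepsilon(t)}_{L^2}^2$ is nonincreasing, so $\sup_{0\le t\le T}\norm{u^\varepsilon(t)}_{L^2}^2=\norm{u_0}_{L^2}^2$. Then \eqref{eq:eps-energy-int} would force $\int_0^T\norm{\nabla u^\varepsilon}_{L^2}^2\,\dd t=0$, i.e.\ $u^\varepsilon\equiv0$. Your fix also runs the wrong way: if $I(t)$ denotes the dissipation integrals over $[0,t]$, then $I(T)\ge I(t)$, so $\sup_t\norm{u^\varepsilon(t)}_{L^2}^2+I(T)\ge\sup_t\bigl(\norm{u^\varepsilon(t)}_{L^2}^2+I(t)\bigr)$.

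Reinterpreting the display ``in the form of \eqref{eq:galerkin-bound}'' does not help, since that display (and \eqref{eq:lions-basic-energy}) has exactly the same defect. The paper's own proof (``integrating in time \ldots\ gives \eqref{eq:eps-energy-int}'') passes over this. What the argument actually yields, for each $t\in[0,T]$, is
\[
\norm{u^\varepsilon(t)}_{L^2}^2+2\nu\int_0^t\norm{\nabla u^\varepsilon(s)}_{L^2}^2\,\dd s+2c_0\varepsilon\int_0^t\norm{\Lambda^\alpha u^\varepsilon(s)}_{L^2}^2\,\dd s\le\norm{u_0}_{L^2}^2 .
\]
Hence each of the three terms in \eqref{eq:eps-energy-int} is separately bounded by $\norm{u_0}_{L^2}^2$, and their sum by $2\norm{u_0}_{L^2}^2$. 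In the truncated case, $P_{\le N}u_0$ replaces $u_0$. This corrected bound is all that is used later, in \eqref{eq:eps-alpha-weighted} and \cref{prop:defect-crossover}. You should state it as your conclusion rather than hedge between a factor $2$ and a reinterpretation.
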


\begin{proof}
The proof is the same as in \cref{thm:main}(i). Since $\Pcal\diver(u^\varepsilon\otimes u^\varepsilon)$ is orthogonal to $u^\varepsilon$ in $L^2_\sigma$ and $M$ is selfadjoint nonnegative, one obtains \eqref{eq:eps-energy}. Integrating in time and invoking the lower bound in \eqref{eq:coercive-equivalence} gives \eqref{eq:eps-energy-int}. The truncated identity follows because $P_{\le N}$ is selfadjoint and commutes with both $\Pcal$ and the Fourier multiplier $M$.
\end{proof}

\begin{remark}[What remains uniform and what does not]
The classical Leray bounds in $L^\infty_tL^2_x\cap L^2_t\dot H^1_x$ are uniform in $\varepsilon$ and are therefore compatible with weak compactness as $\varepsilon\downarrow0$. By contrast, the genuinely hyperdissipative quantity $\Lambda^\alpha u^\varepsilon$ appears only with the factor $\varepsilon$ in \eqref{eq:eps-energy-int}. Thus the estimate yields
\begin{equation}\label{eq:eps-alpha-weighted}
\varepsilon\int_0^T\norm{\Lambda^\alpha u^\varepsilon(t)}_{L^2}^2\,\dd t\le C(u_0,T),
\end{equation}
but not an $\varepsilon$-independent bound on $\Lambda^\alpha u^\varepsilon$ itself. This is the first quantitative sign of degeneration in the Navier--Stokes limit.
\end{remark}

The frequency-space meaning of \eqref{eq:eps-energy-int} is also instructive. The linear damping rate in \eqref{eq:eps-main} is
\begin{equation}\label{eq:linear-rate-eps}
\nu|\xi|^2 + \varepsilon m(\xi),
\end{equation}
which, in the model case $m(\xi)\sim |\xi|^{2\alpha}$, crosses over between classical viscosity and hyperdissipation at the frequency
\begin{equation}\label{eq:crossover}
|\xi|\approx \Bigl(\frac{\nu}{\varepsilon}\Bigr)^{\frac{1}{2\alpha-2}}.
\end{equation}
As $\varepsilon\downarrow0$, the crossover frequency tends to $+\infty$. Thus the regularizing barrier is pushed farther and farther into the ultraviolet regime. This is exactly the regime in which one would need uniform control to preclude a concentration cascade.

\subsection{The source of non-uniformity in high-order estimates}

The next proposition isolates the obstruction at the level of strong norms.

\begin{proposition}[Loss of uniformity in $H^s$]\label{prop:nonuniform-Hs}
Assume $s>\frac52$, $\alpha>1$, and let $u^\varepsilon$ be a strong solution of \eqref{eq:eps-main} on $[0,T]$. Then
\begin{equation}\label{eq:eps-Hs}
\frac{\dd}{\dd t}\norm{u^\varepsilon}_{H^s}^2
+c\varepsilon\norm{u^\varepsilon}_{H^{s+\alpha}}^2
\le C\bigl(1+\norm{u^\varepsilon}_{H^\alpha}^2\bigr)\norm{u^\varepsilon}_{H^s}^2,
\end{equation}
where $c,C>0$ are independent of $\varepsilon$. Consequently, after interpolation and Young's inequality, one may bound the nonlinear term only at the cost of constants that blow up as $\varepsilon\downarrow0$; more precisely, there exist $\theta\in(0,1)$ and $C_{s,\alpha}>0$ such that
\begin{equation}\label{eq:eps-Hs-degenerate}
\frac{\dd}{\dd t}\norm{u^\varepsilon}_{H^s}^2
\le C_{s,\alpha}\Bigl(1+\varepsilon^{-\frac{\theta}{1-\theta}}\norm{u^\varepsilon}_{H^\alpha}^{\frac{2}{1-\theta}}\Bigr)\norm{u^\varepsilon}_{H^s}^2.
\end{equation}
In particular, the continuation argument proving global regularity for each fixed $\varepsilon>0$ is not uniform as $\varepsilon\downarrow0$.
\end{proposition}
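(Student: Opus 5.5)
The plan is to repeat Steps 1–2 of the proof of \cref{thm:lions} for \eqref{eq:eps-main}, keeping track of every power of $\varepsilon$. First, apply $\Lambda^s$ to \eqref{eq:eps-main} and pair with $\Lambda^s u^\varepsilon$, exactly as in \eqref{eq:Hs-energy-start}. The dissipative side becomes
\[
\nu\norm{\nabla u^\varepsilon}_{H^s}^2+\varepsilon\int_{\R^3}(1+|\xi|^2)^s m(\xi)|\hat u^\varepsilon|^2\,\dd\xi .
\]
The low-frequency comparison \eqref{eq:Hs-diss-lower} bounds the second term below by $c\varepsilon\norm{u^\varepsilon}_{H^{s+\alpha}}^2-C\varepsilon\norm{u^\varepsilon}_{H^s}^2$. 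Its constants come only from $c_0$, so they are independent of $\varepsilon$. The error term $-C\varepsilon\norm{u^\varepsilon}_{H^s}^2$ is harmless and goes into the "$1$" of the coefficient once $\varepsilon\le1$.

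Next, estimate the trilinear term. I would not use the duality $H^{s-\alpha}\times H^{s+\alpha}$ from \cref{lem:critical-product}. Instead I would use the Kato–Ponce commutator form. Divergence-freeness removes the top-order term, so
\[
\abs{\ip{\Lambda^s\Pcal\diver(u\otimes u)}{\Lambda^s u}}\le C\norm{\nabla u}_{L^\infty}\norm{u}_{H^s}^2,
\]
with $C=C(s)$ independent of $\varepsilon$. Then interpolate $\norm{\nabla u}_{L^\infty}$ between $H^\alpha$ and $H^{s+\alpha}$ via Gagliardo–Nirenberg:
\[
\norm{\nabla u}_{L^\infty}\le C\norm{u}_{H^\alpha}^{1-\theta}\norm{u}_{H^{s+\alpha}}^{\theta}.
\]
Here $\theta\in(0,1)$ is fixed by scaling, $\alpha(1-\theta)+(s+\alpha)\theta=\frac52+\delta$ with small $\delta>0$. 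This form is valid for every $\alpha>1$ and is the one that produces the exponent structure in \eqref{eq:eps-Hs-degenerate}.

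The ordering of Young's inequality is the delicate point. The factor $\norm{u}_{H^{s+\alpha}}^\theta$ is multiplied by $\norm{u}_{H^s}^2$, not by $\norm{u}_{H^s}$. So I would first use $\norm{u}_{H^s}\le\norm{u}_{H^{s+\alpha}}$ on one factor and then split by Young with exponents $\frac{2}{1+\theta}$ and $\frac{2}{1-\theta}$. If that homogeneity does not match, I would instead interpolate $\norm{u}_{H^s}$ between $H^\alpha$ and $H^{s+\alpha}$ as well and choose $\theta$ accordingly. The aim is to absorb $\frac{c\varepsilon}{2}\norm{u}_{H^{s+\alpha}}^2$ into the left side. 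The dual Young weight then contributes $\varepsilon^{-\theta/(1-\theta)}$ times the power $\norm{u}_{H^\alpha}^{2/(1-\theta)}$, which is \eqref{eq:eps-Hs-degenerate}.

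The same bookkeeping gives \eqref{eq:eps-Hs}. In the Lions route, \cref{lem:critical-product} plus Young yields $C\varepsilon^{-1}\norm{u}_{H^\alpha}^2\norm{u}_{H^s}^2$, so at least one constant is $\varepsilon$-dependent. I therefore expect the honest statement to hold with $C$ independent of $\varepsilon$ only after this $\varepsilon^{-1}$ is displayed or is absorbed into the normalization $\varepsilon\le1$. Determining which constant in \eqref{eq:eps-Hs} can truly be taken $\varepsilon$-independent is the main obstacle.

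Finally, for the non-uniformity conclusion, insert the energy bound \eqref{eq:eps-alpha-weighted}. It gives only $\int_0^T\norm{u^\varepsilon}_{H^\alpha}^2\le C\varepsilon^{-1}$. The Gronwall exponent in \eqref{eq:Hs-global-bound} is therefore of size $\varepsilon^{-1}$, or worse once the power $\frac{2}{1-\theta}>2$ is involved. Hence the resulting $H^s$ bound degenerates as $\varepsilon\downarrow0$.
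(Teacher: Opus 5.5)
There is a genuine gap: neither displayed inequality is actually derived in your proposal. For \eqref{eq:eps-Hs} you correctly see that absorbing the cubic term into $c\varepsilon\norm{u}_{H^{s+\alpha}}^2$ costs a factor $\varepsilon^{-1}$. The rescue you suggest runs the wrong way, however: for $0<\varepsilon\le1$ one has $\varepsilon^{-1}\ge1$, so normalizing $\varepsilon\le1$ absorbs nothing.

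Your diagnosis applies verbatim to the paper's proof. Its ``one application of Young's inequality'' to $C\norm{u}_{H^\alpha}\norm{u}_{H^s}\norm{u}_{H^{s+\alpha}}$ also leaves $\frac{C^2}{2c\varepsilon}\norm{u}_{H^\alpha}^2\norm{u}_{H^s}^2$, since the $\nu$-term controls only $H^{s+1}$. Moreover, that trilinear bound is available only for $\alpha\ge\frac54$ (\cref{lem:critical-product}): on $A\phi(N\cdot)$ its two sides scale like $A^3N^{2s-2}$ and $A^3N^{2s+2\alpha-9/2}$.

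No bookkeeping closes this gap. Take $u_0=A\phi(N\cdot)$ with $\phi$ divergence free, $\hat\phi$ supported in an annulus, and $\ip{\Lambda^s\Pcal\diver(\phi\otimes\phi)}{\Lambda^s\phi}<0$ (replace $\phi$ by $-\phi$ if needed). At $t=0$ the cubic term contributes $\sim A^3N^{2s-2}$ to $\frac{\dd}{\dd t}\norm{u}_{H^s}^2$. All dissipation plus the right side of \eqref{eq:eps-Hs} is only $\lesssim A^2N^{2s-2}\bigl(\varepsilon N^{2\alpha-1}+\nu N+N^{-1}+A^2N^{2\alpha-4}\bigr)$.
\begin{itemize}
\item Choosing $A=N^\beta$ with $1<\beta<4-2\alpha$ and $\varepsilon\to0$ refutes \eqref{eq:eps-Hs} with $\varepsilon$-independent $C$ for every $\alpha<\frac32$.
\item Choosing $2\alpha-1<\beta<4-2\alpha$ refutes it even with $\varepsilon$-dependent $C$ for every $\alpha<\frac54$. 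This is consistent with the fact that, combined with \eqref{eq:eps-alpha-weighted} and Gronwall, it would give large-data global regularity below the Lions exponent, which the paper lists as open.
\end{itemize}
What the Lions route genuinely gives is \eqref{eq:eps-Hs} with $C\varepsilon^{-1}$ in place of $C$, and only for $\alpha\ge\frac54$.

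For \eqref{eq:eps-Hs-degenerate}, your Kato--Ponce/Gagliardo--Nirenberg start is valid for every $\alpha>1$, but the Young split, where all the content lies, is left undecided. The first option you give leaves $\varepsilon^{-\frac{1+\theta}{1-\theta}}\norm{u}_{H^\alpha}^{2}\norm{u}_{H^s}^{\frac{2}{1-\theta}}$. That is superquadratic in $\norm{u}_{H^s}$, so it is not of Gronwall type.

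Interpolating part of $\norm{u}_{H^s}^2$ can restore the factor $\norm{u}_{H^s}^2$, but degree counting then fixes the weight. A weighted Young step that splits a cubic bound into $\frac{c\varepsilon}{2}\norm{u}_{H^{s+\alpha}}^2$ (degree $2$) and $K\norm{u}_{H^\alpha}^{p}\norm{u}_{H^s}^2$ (degree $p+2$) must use exponents $\frac{p}{p-1}$ and $p$, so $K\sim\varepsilon^{-(p-1)}$. For $p=\frac{2}{1-\theta}$ this is $\varepsilon^{-\frac{1+\theta}{1-\theta}}$, not the $\varepsilon^{-\frac{\theta}{1-\theta}}$ you assert. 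A further Young step can trade it for the stated pairing only if $p<2$, which in your scheme means $\alpha>\frac54$. The paper's computation has the same defect: inserting the interpolation into $\norm{u}_{H^\alpha}\norm{u}_{H^s}\norm{u}_{H^{s+\alpha}}$ gives $\norm{u}_{H^\alpha}^{2-\theta}\norm{u}_{H^{s+\alpha}}^{1+\theta}$, not the product it displays.

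The missing idea is to use the $\nu$-dissipation rather than the $\varepsilon$-term. For $1<\alpha<\frac32$ and $q=\frac{6}{3-2\alpha}$, Kato--Ponce, Sobolev embedding and interpolation give $\abs{\ip{\Lambda^s\Pcal\diver(u\otimes u)}{\Lambda^s u}}\le\norm{\Lambda^s(u\otimes u)}_{L^2}\norm{u}_{\dot H^{s+1}}\lesssim\norm{u}_{L^q}\norm{u}_{\dot H^s}^{1-3/q}\norm{u}_{\dot H^{s+1}}^{1+3/q}$. Applying Young against $\frac{\nu}{2}\norm{u}_{\dot H^{s+1}}^2$ and using $\dot H^\alpha\subset L^q$ yields the $\varepsilon$-uniform bound $\frac{\dd}{\dd t}\norm{u^\varepsilon}_{H^s}^2\le C_\nu\bigl(1+\norm{u^\varepsilon}_{H^\alpha}^{4/(2\alpha-1)}\bigr)\norm{u^\varepsilon}_{H^s}^2$. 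This is \eqref{eq:eps-Hs-degenerate} with $\theta=\frac32-\alpha$ for $\varepsilon\le1$; for $\alpha\ge\frac32$, apply it with some $\alpha'\in(1,\frac32)$.

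The same observation shows that the factor $\varepsilon^{-\theta/(1-\theta)}$ is not forced. Your closing Gronwall remark therefore shows only that one particular bound degenerates, not that uniform control is lost.
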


\begin{proof}
The proof is parallel to \cref{thm:lions}. Applying $\Lambda^s$ to \eqref{eq:eps-main} and pairing with $\Lambda^s u^\varepsilon$ gives
\[
\frac12\frac{\dd}{\dd t}\norm{u^\varepsilon}_{H^s}^2
+\nu\norm{\nabla u^\varepsilon}_{H^s}^2
+\varepsilon\int_{\R^3}(1+|\xi|^2)^s m(\xi)|\widehat{u^\varepsilon}(\xi)|^2\,\dd\xi
=-\ip{\Lambda^s\Pcal\diver(u^\varepsilon\otimes u^\varepsilon)}{\Lambda^s u^\varepsilon}.
\]
The dissipative term is bounded below exactly as in \eqref{eq:Hs-diss-lower}, now with the prefactor $\varepsilon$:
\[
\varepsilon\int_{\R^3}(1+|\xi|^2)^s m(\xi)|\widehat{u^\varepsilon}(\xi)|^2\,\dd\xi
\ge c\varepsilon\norm{u^\varepsilon}_{H^{s+\alpha}}^2-C\varepsilon\norm{u^\varepsilon}_{H^s}^2.
\]
Using \cref{lem:critical-product} when $\alpha\ge \frac54$, or \cref{lem:bilinear} together with Sobolev embedding when $\alpha>1$, one obtains
\[
\bigl|\ip{\Lambda^s\Pcal\diver(u^\varepsilon\otimes u^\varepsilon)}{\Lambda^s u^\varepsilon}\bigr|
\le C\norm{u^\varepsilon}_{H^\alpha}\norm{u^\varepsilon}_{H^s}\norm{u^\varepsilon}_{H^{s+\alpha}}.
\]
This yields \eqref{eq:eps-Hs} after one application of Young's inequality.

To see the explicit degeneration, interpolate between $H^\alpha$ and $H^{s+\alpha}$:
\[
\norm{u^\varepsilon}_{H^s}
\le C\norm{u^\varepsilon}_{H^\alpha}^{1-\theta}\norm{u^\varepsilon}_{H^{s+\alpha}}^{\theta}
\qquad\text{for some }\theta\in(0,1).
\]
Substituting this into the trilinear bound gives
\[
\bigl|\ip{\Lambda^s\Pcal\diver(u^\varepsilon\otimes u^\varepsilon)}{\Lambda^s u^\varepsilon}\bigr|
\le C\norm{u^\varepsilon}_{H^\alpha}^{2-\theta}\norm{u^\varepsilon}_{H^{s+\alpha}}^{\theta}\norm{u^\varepsilon}_{H^s}.
\]
A weighted Young inequality with parameter $\varepsilon$ then produces
\[
C\norm{u^\varepsilon}_{H^\alpha}^{2-\theta}\norm{u^\varepsilon}_{H^{s+\alpha}}^{\theta}\norm{u^\varepsilon}_{H^s}
\le \frac{c\varepsilon}{2}\norm{u^\varepsilon}_{H^{s+\alpha}}^2
+ C_{s,\alpha}\varepsilon^{-\frac{\theta}{1-\theta}}\norm{u^\varepsilon}_{H^\alpha}^{\frac{2}{1-\theta}}\norm{u^\varepsilon}_{H^s}^2,
\]
which is precisely \eqref{eq:eps-Hs-degenerate} after absorbing the first term into the left-hand side.
\end{proof}

\begin{remark}[Scale-critical norms]
The degeneration described in \cref{prop:nonuniform-Hs} is the analytic reflection of a critical fact: the fixed-$\varepsilon$ argument controls supercritical norms through a term weighted by $\varepsilon$, but it does not produce a uniform bound in any scale-critical continuation class for the original Navier--Stokes equations. Typical missing controls are of Serrin--Prodi type
\[
u\in L^r(0,T;L^q(\R^3)),\qquad \frac{2}{r}+\frac{3}{q}=1,
\qquad 3<q\le\infty,
\]
or endpoint quantities such as $L^\infty_t\dot H^{1/2}_x$, $L^2_t\dot H^{3/2}_x$, or $\int_0^T\norm{\nabla u(t)}_{L^\infty}\,\dd t$. Any one of these, if obtained uniformly in $\varepsilon$, would put the large-data problem much closer to the classical continuation theory.
\end{remark}

\subsection{Hypothetical singularity and necessary concentration laws}

The preceding discussion can be sharpened into a contradiction framework. The point is not that the hyperdissipative approximation already rules out singularity, but rather that any genuine singularity of the classical Navier--Stokes flow must leave a quantitative footprint on the regularized family. In particular, before the first singular time the approximation is stable, while at the singular time every continuation norm must lose uniformity.

\begin{proposition}[Convergence on compact time intervals before blow-up]\label{prop:pre-singular-stability}
Let $u_0\in C_c^\infty(\R^3)^3$ be divergence free, let $u$ be the corresponding classical solution of the three-dimensional Navier--Stokes equations on $[0,T_*)$, and fix $s>\frac52$. For every $T<T_*$ there exist $\varepsilon_T>0$ and $C_T>0$ such that, for every $0<\varepsilon\le \varepsilon_T$, the regularized solution $u^\varepsilon$ of \eqref{eq:eps-main} is defined on $[0,T]$ and satisfies
\begin{equation}\label{eq:pre-singular-convergence}
\sup_{0\le t\le T}\norm{u^\varepsilon(t)-u(t)}_{H^{s-1}}^2
+\nu\int_0^T \norm{u^\varepsilon(t)-u(t)}_{H^s}^2\,\dd t
+c\varepsilon\int_0^T \norm{u^\varepsilon(t)-u(t)}_{H^{s-1+\alpha}}^2\,\dd t
\le C_T\varepsilon^2.
\end{equation}
In particular,
\[
u^\varepsilon\to u\quad\text{in }L^\infty(0,T;H^{s-1}(\R^3))\]
for every $T<T_*$.
\end{proposition}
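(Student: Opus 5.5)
We compare $u^\varepsilon$ with the classical solution through the difference $w:=u^\varepsilon-u$, and run an $H^{s-1}$ energy estimate for $w$ closed by a bootstrap. Since $u_0\in C_c^\infty$ and $T<T_*$, the classical solution is smooth on $[0,T]$ and $\sup_{0\le t\le T}\norm{u(t)}_{H^k}=:K_k(T)<\infty$ for every $k$. Subtracting the two equations, $w$ satisfies $w(0)=0$ and
\[
\partial_t w+\Pcal\diver\bigl(w\otimes w+u\otimes w+w\otimes u\bigr)+\nu(-\Delta)w+\varepsilon Mw=-\varepsilon Mu .
\]
The only source term is $-\varepsilon Mu$, which is $O(\varepsilon)$ in every Sobolev norm because $u$ is smooth on $[0,T]$. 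This is where the rate $\varepsilon^2$ in \eqref{eq:pre-singular-convergence} comes from.

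\textbf{Step 1 (energy estimate).} Apply $\Lambda^{s-1}$ and pair with $\Lambda^{s-1}w$, as in \cref{lem:linear} and Step 1 of \cref{thm:lions}. The $\varepsilon M$ term is bounded below exactly as in \eqref{eq:Hs-diss-lower}, giving $c\varepsilon\norm{w}_{H^{s-1+\alpha}}^2-C\varepsilon\norm{w}_{H^{s-1}}^2$; the negative part is harmless for $\varepsilon\le1$. For the forcing, the upper bound in \eqref{eq:symbol-growth} gives
\[
\varepsilon\abs{\ip{\Lambda^{s-1}Mu}{\Lambda^{s-1}w}}\le C\varepsilon\,K_{s-1+2\alpha}(T)\norm{w}_{H^{s-1}}\le C\varepsilon^2K^2+\norm{w}_{H^{s-1}}^2 .
\]
For the nonlinear terms, move one derivative onto $\Lambda^{s-1}w$ and use that $H^{s-1}$ is an algebra since $s-1>\frac32$:
\[
\abs{\ip{\Lambda^{s-1}\Pcal\diver(u\otimes w+w\otimes u)}{\Lambda^{s-1}w}}\le CK_{s-1}\norm{w}_{H^{s-1}}\norm{w}_{H^s},
\qquad
\abs{\ip{\Lambda^{s-1}\Pcal\diver(w\otimes w)}{\Lambda^{s-1}w}}\le C\norm{w}_{H^{s-1}}^2\norm{w}_{H^s}.
\]
Absorb $\frac\nu2\norm{w}_{H^s}^2$ by Young's inequality. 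With $y(t)=\norm{w(t)}_{H^{s-1}}^2$ this yields
\[
y'+\nu\norm{w}_{H^s}^2+c\varepsilon\norm{w}_{H^{s-1+\alpha}}^2\le C_T\,y+C\,y^2+C_T\varepsilon^2,\qquad y(0)=0 .
\]
(Here $\norm{\cdot}_{H^s}$ is understood with the inhomogeneous weight, so that it dominates the low frequencies.)

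\textbf{Step 2 (bootstrap).} On any subinterval $[0,t_1]\subset[0,T]$ where $u^\varepsilon$ exists and $y\le1$, the quadratic term is dominated by the linear one. Gronwall then gives $y\le C_T\varepsilon^2e^{2C_TT}$, and integrating once more gives the full left side of \eqref{eq:pre-singular-convergence} bounded by $C_T\varepsilon^2$. Choose $\varepsilon_T$ so that this bound is $\le\frac12$; continuity of $t\mapsto y(t)$ then closes the bootstrap, and the estimate holds as long as $u^\varepsilon$ exists on $[0,T]$.

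\textbf{Step 3 (existence of $u^\varepsilon$ on all of $[0,T]$).} This is the main obstacle, since for $1<\alpha<\frac54$ we have no global theory for fixed $\varepsilon$. Local existence in $H^s$ (in fact $H^k$ for all $k$) comes from \cref{thm:local} applied to $\nu(-\Delta)+\varepsilon M$; its constants depend on $\varepsilon$, but $\varepsilon$ is fixed here. To continue up to $T$, use the bootstrap bound: $\int_0^T\norm{w}_{H^s}^2\,\dd t\le C_T\varepsilon^2$ and $H^s\hookrightarrow W^{1,\infty}$ for $s>\frac52$. Together with the smoothness of $u$ this gives $\int_0^T\norm{\nabla u^\varepsilon}_{L^\infty}\,\dd t<\infty$, uniformly in small $\varepsilon$. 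A standard Kato--Ponce commutator $H^s$ estimate, in which the dissipative terms only help, then gives $\frac{\dd}{\dd t}\norm{u^\varepsilon}_{H^s}^2\le C\norm{\nabla u^\varepsilon}_{L^\infty}\norm{u^\varepsilon}_{H^s}^2$. This is a BKM-type criterion, so the $H^s$ norm cannot blow up before $T$. The continuation argument of Step 4 of \cref{thm:lions} then extends $u^\varepsilon$ to $[0,T]$.

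If one prefers to avoid commutators, an alternative is to run the difference estimate at every level $H^{k}$ instead. The linear terms in $w$ are handled by the smoothness of $u$, and the top-order quadratic term $\ip{\Lambda^{k}(w\cdot\nabla w)}{\Lambda^{k}w}$ is controlled by $C\norm{\nabla w}_{L^\infty}\norm{w}_{H^{k}}^2$, which is integrable by Step 2. The convergence $u^\varepsilon\to u$ in $L^\infty(0,T;H^{s-1})$ is then immediate from \eqref{eq:pre-singular-convergence}.
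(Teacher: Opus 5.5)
Your argument is correct. It shares the paper's core computation, namely the $H^{s-1}$ energy estimate for $w=u^\varepsilon-u$ with the single source term $-\varepsilon Mu$, but it handles the decisive step differently.

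The paper first asserts a uniform-in-$\varepsilon$ bound $\sup_{t\le T}\norm{u^\varepsilon(t)}_{H^s}\le C_T$. It justifies this by the local theory together with ``continuous dependence on lower-order perturbations of the linear part.'' It then treats the difference equation as linear in $w$, writing the nonlinearity as $w\otimes u+u^\varepsilon\otimes w$ with $u^\varepsilon$ as a bounded coefficient, so a plain Gronwall argument closes.

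You instead keep the quadratic term $w\otimes w$ and close a bootstrap on $\norm{w}_{H^{s-1}}^2\le 1$, using nothing about $u^\varepsilon$ beyond local existence. You then derive existence on $[0,T]$ from $\int_0^T\norm{\nabla u^\varepsilon}_{L^\infty}\,\dd t<\infty$ and a Kato--Ponce/BKM estimate, and the uniform $H^s$ bound comes out as a by-product. For $\alpha\ge\frac54$ this step could simply cite \cref{thm:lions} applied to the symbol $\varepsilon m$.

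The paper's route is shorter, but its key input is not actually established there. \cref{thm:local} has constants depending on the coercivity constant $\varepsilon c_0$, which degenerates as $\varepsilon\downarrow0$. Moreover, $\varepsilon M$ has order $2\alpha>2$, so it is not a lower-order perturbation of $\nu\Delta$. Your route is self-contained and proves what the paper assumes.

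You also treat the forcing term correctly. The splitting $\varepsilon K\norm{w}_{H^{s-1}}\le \varepsilon^2K^2+\norm{w}_{H^{s-1}}^2$ is what yields the rate $\varepsilon^2$. The paper's intermediate bound $\varepsilon C_T\bigl(\norm{w^\varepsilon}_{H^{s-1}}^2+1\bigr)$ would only give an $O(\varepsilon)$ source, and hence only $O(\varepsilon)$ on the left side of \eqref{eq:pre-singular-convergence}.
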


\begin{proof}
Fix $T<T_*$. Since $u$ is smooth on $[0,T]$, there is $K_T$ such that
\[
\sup_{0\le t\le T}\norm{u(t)}_{H^{s+2\alpha}}\le K_T.
\]
By the local strong theory proved earlier, together with continuous dependence of the solution map on lower-order perturbations of the linear part, there exists $\varepsilon_T>0$ such that $u^\varepsilon$ exists on $[0,T]$ and
\begin{equation}\label{eq:pre-singular-uniform-bound}
\sup_{0<\varepsilon\le \varepsilon_T}\sup_{0\le t\le T}\norm{u^\varepsilon(t)}_{H^s}\le C_T.
\end{equation}
Set $w^\varepsilon=u^\varepsilon-u$. Subtracting the classical Navier--Stokes equation from \eqref{eq:eps-main} and adding and subtracting $\varepsilon M u$ gives
\begin{equation}\label{eq:w-eps}
\partial_t w^\varepsilon - \nu\Delta w^\varepsilon + \varepsilon M w^\varepsilon
+\Pcal\diver\Bigl(w^\varepsilon\otimes u + u^\varepsilon\otimes w^\varepsilon\Bigr)
=-\varepsilon M u,
\qquad \diver w^\varepsilon=0,
\qquad w^\varepsilon|_{t=0}=0.
\end{equation}
Apply $\Lambda^{s-1}$ to \eqref{eq:w-eps}, pair the result with $\Lambda^{s-1}w^\varepsilon$, and use the standard $H^{s-1}$ product estimate valid for $s>\frac52$:
\[
\norm{\Lambda^{s-1}\Pcal\diver(f\otimes g)}_{L^2}
\le C\bigl(\norm{f}_{H^s}\norm{g}_{H^{s-1}}+\norm{f}_{H^{s-1}}\norm{g}_{H^s}\bigr).
\]
Using \eqref{eq:pre-singular-uniform-bound}, the bound on $u$, and the coercivity of $M$, we obtain
\[
\frac12\frac{\dd}{\dd t}\norm{w^\varepsilon}_{H^{s-1}}^2
+\nu\norm{w^\varepsilon}_{H^s}^2
+c\varepsilon\norm{w^\varepsilon}_{H^{s-1+\alpha}}^2
\le C_T\norm{w^\varepsilon}_{H^{s-1}}^2
+\varepsilon\,\bigl|\ip{\Lambda^{s-1}M u}{\Lambda^{s-1}w^\varepsilon}\bigr|.
\]
Since $u$ is smooth,
\[
\bigl|\ip{\Lambda^{s-1}M u}{\Lambda^{s-1}w^\varepsilon}\bigr|
\le C_T\norm{u}_{H^{s-1+2\alpha}}\norm{w^\varepsilon}_{H^{s-1}}
\le C_T\bigl(\norm{w^\varepsilon}_{H^{s-1}}^2+1\bigr).
\]
Hence
\[
\frac{\dd}{\dd t}\norm{w^\varepsilon}_{H^{s-1}}^2
+\nu\norm{w^\varepsilon}_{H^s}^2
+c\varepsilon\norm{w^\varepsilon}_{H^{s-1+\alpha}}^2
\le C_T\norm{w^\varepsilon}_{H^{s-1}}^2 + C_T\varepsilon^2.
\]
Since $w^\varepsilon(0)=0$, Gronwall's lemma yields \eqref{eq:pre-singular-convergence}.
\end{proof}

The previous proposition shows that the regularized family does approximate the classical solution with an $O(\varepsilon)$ error on every compact subinterval before the first singular time. The only possible loss of compactness must therefore occur asymptotically as $t\uparrow T_*$.

\begin{theorem}[Continuation norms must diverge under a hypothetical singularity]\label{thm:continuation-blowup}
Let $u_0\in C_c^\infty(\R^3)^3$ be divergence free, let $u$ be the corresponding classical Navier--Stokes solution on $[0,T_*)$, and assume that $T_*<\infty$ is its first singular time. Let $u^\varepsilon$ denote the corresponding regularized solutions of \eqref{eq:eps-main}. Then for every Serrin pair $(r,q)$ with
\[
\frac{2}{r}+\frac{3}{q}=1,
\qquad 3<q\le \infty,
\]
one has
\begin{equation}\label{eq:serrin-divergence}
\limsup_{\varepsilon\downarrow0}\norm{u^\varepsilon}_{L^r(0,T_*;L^q(\R^3))}=+\infty.
\end{equation}
The same conclusion holds for any continuation class $X(0,T_*)$ for which bounded sets are weakly sequentially compact and boundedness of the classical solution in $X$ implies continuation beyond $T_*$; in particular this applies to $L^\infty(0,T_*;\dot H^{1/2}(\R^3))$ and $L^2(0,T_*;\dot H^{3/2}(\R^3))$.
\end{theorem}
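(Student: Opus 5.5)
We argue by contradiction. Suppose \eqref{eq:serrin-divergence} fails for some Serrin pair $(r,q)$, so that there exist $K<\infty$ and $\varepsilon_0>0$ with $\norm{u^\varepsilon}_{L^r(0,T_*;L^q)}\le K$ for all $0<\varepsilon\le\varepsilon_0$. (For fixed $\varepsilon$ the solution $u^\varepsilon$ is global when $\alpha\ge\frac54$ by \cref{thm:lions}; for $1<\alpha<\frac54$ we read the hypothesis as including existence of $u^\varepsilon$ on $[0,T_*]$, since the norm is otherwise undefined.) Take a sequence $\varepsilon_k\downarrow0$.

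\emph{Step 1: extract a limit.} The uniform energy bound \eqref{eq:eps-energy-int} from \cref{prop:eps-energy} gives boundedness in $L^\infty(0,T_*;L^2)\cap L^2(0,T_*;\dot H^1)$. The assumed bound gives boundedness in $L^r(0,T_*;L^q)$, which is reflexive for $q<\infty$; for $q=\infty$ (then $r=2$) we use weak-$*$ compactness in $L^2(0,T_*;L^\infty)$. After passing to a subsequence, $u^{\varepsilon_k}\rightharpoonup v$ weakly (or weakly-$*$) in all of these spaces. Weak lower semicontinuity of the norm then gives
\[
\norm{v}_{L^r(0,T_*;L^q)}\le K.
\]

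\emph{Step 2: identify the limit with $u$.} For each $T<T_*$, \cref{prop:pre-singular-stability} gives $u^{\varepsilon_k}\to u$ strongly in $L^\infty(0,T;H^{s-1})$. Weak limits are unique, so $v=u$ a.e.\ on $(0,T)\times\R^3$. Since $T<T_*$ is arbitrary, $v=u$ on $(0,T_*)$. Therefore
\[
\norm{u}_{L^r(0,T_*;L^q)}\le K<\infty.
\]
No passage to the limit in the nonlinearity is needed, because the limit is already identified by the pre-singular convergence.

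\emph{Step 3: contradiction.} The classical Serrin--Prodi--Ladyzhenskaya continuation criterion applies to the smooth solution $u$ on $[0,T_*)$. For $q=3$ one would need the Escauriaza--Seregin--\v{S}ver\'ak endpoint, but that case is excluded by $q>3$. The criterion says that $u\in L^r(0,T_*;L^q)$ with $\frac2r+\frac3q=1$, $3<q\le\infty$, implies that $u$ extends smoothly beyond $T_*$. This contradicts the assumption that $T_*$ is the first singular time, which proves \eqref{eq:serrin-divergence}.

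The general statement for a continuation class $X$ follows by the same three steps. Weak sequential compactness of bounded sets in $X$ replaces Step 1. Identification of the limit again uses \cref{prop:pre-singular-stability}, which requires $X$ to embed into distributions on $(0,T)\times\R^3$ so that weak limits are compatible. The lower semicontinuity in Step 1 uses that $X$ is a dual or reflexive space. For $L^\infty(0,T_*;\dot H^{1/2})$ and $L^2(0,T_*;\dot H^{3/2})$, which are a dual and a Hilbert space respectively, the continuation property is classical; the first is the Fujita--Kato-type criterion, since $\dot H^{1/2}$ is critical.

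The main obstacle is Step 2. We must make sure that boundedness on the whole interval $[0,T_*)$ transfers to $u$ although convergence is known only on compact subintervals. This works because weak limits are local in time and norms are lower semicontinuous, combined with monotone convergence in $T\uparrow T_*$:
\[
\norm{u}_{L^r(0,T;L^q)}\le\liminf_{k\to\infty}\norm{u^{\varepsilon_k}}_{L^r(0,T;L^q)}\le K\quad\text{for every }T<T_*.
\]
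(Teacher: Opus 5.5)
Your argument is correct and is essentially the paper's proof. You extract a weak(-$*$) limit from the assumed bounded sequence, identify it with $u$ on every $(0,T)$, $T<T_*$, through \cref{prop:pre-singular-stability}, transfer the bound to $u$ by lower semicontinuity, and reach a contradiction with the Serrin--Prodi criterion; as you note, $q>3$ makes Escauriaza--Seregin--\v{S}ver\'ak unnecessary at that step.

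One correction applies equally to the paper's own wording. A bound in $L^\infty(0,T_*;\dot H^{1/2})$ does not give continuation by Fujita--Kato theory, because in the critical space $\dot H^{1/2}$ the local existence time is not controlled by the norm alone. That endpoint criterion requires Escauriaza--Seregin--\v{S}ver\'ak (via $\dot H^{1/2}\hookrightarrow L^3$) or Kenig--Koch. The $L^2(0,T_*;\dot H^{3/2})$ case, by contrast, does follow from an elementary $\dot H^1$ energy estimate combined with Gronwall.
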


\begin{proof}
Assume, for contradiction, that there exist a Serrin pair $(r,q)$ and a sequence $\varepsilon_n\downarrow0$ such that
\begin{equation}\label{eq:uniform-serrin-assumption}
\sup_n \norm{u^{\varepsilon_n}}_{L^r(0,T_*;L^q)}\le C_*<\infty.
\end{equation}
By Banach--Alaoglu there is a subsequence, not relabelled, and $v\in L^r(0,T_*;L^q)$ such that
\[
u^{\varepsilon_n}\rightharpoonup v\qquad\text{weakly in }L^r(0,T_*;L^q(\R^3)).
\]
Fix $T<T_*$. By \cref{prop:pre-singular-stability},
\[
u^{\varepsilon_n}\to u\qquad\text{strongly in }L^\infty(0,T;H^{s-1}(\R^3)).
\]
Since $H^{s-1}(\R^3)\hookrightarrow L^q(\R^3)$ for every finite $q$ when $s>\frac52$, the weak limit on $(0,T)$ is unique, hence $v=u$ almost everywhere on $(0,T)\times\R^3$. As $T<T_*$ is arbitrary, we conclude that $v=u$ on $(0,T_*)\times\R^3$. By weak lower semicontinuity,
\[
\norm{u}_{L^r(0,T_*;L^q)}
\le \liminf_{n\to\infty}\norm{u^{\varepsilon_n}}_{L^r(0,T_*;L^q)}
\le C_*.
\]
Therefore $u$ belongs to a Serrin continuation class on $(0,T_*)$. By the Serrin--Prodi continuation criterion, the solution $u$ extends smoothly beyond $T_*$, contradicting the definition of $T_*$; compare Serrin \cite{Serrin1962} and the scale-critical endpoint theory of Escauriaza, Seregin, and \v Sver\'ak \cite{EscauriazaSereginSverak2003}. This proves \eqref{eq:serrin-divergence}.

The same argument applies verbatim to any continuation class $X$ having the two properties stated in the theorem. For the critical Sobolev classes $L^\infty_t\dot H^{1/2}_x$ and $L^2_t\dot H^{3/2}_x$, one uses the classical Fujita--Kato--Kato continuation theory \cite{FujitaKato1964,Kato1984}.
\end{proof}

Theorem \ref{thm:continuation-blowup} is the first rigid consequence of a hypothetical singularity: every continuation norm must lose uniformity as $\varepsilon\downarrow0$. The next proposition shows that, if a nontrivial amount of the weighted hyperdissipation survives in the limit, then this loss of uniformity cannot remain spectrally diffuse. It must occur at frequencies comparable to or above the crossover scale.

\begin{proposition}[Positive dissipation defect forces crossover-frequency concentration]\label{prop:defect-crossover}
Assume that $m(\xi)$ satisfies \eqref{eq:symbol-growth}. Let
\[
R_\varepsilon:=\Bigl(\frac{\nu}{\varepsilon}\Bigr)^{\frac{1}{2\alpha-2}}
\]
be the crossover frequency from \eqref{eq:crossover}. For $\eta\in(0,1)$ define the low-frequency part of the weighted hyperdissipation by
\[
D^{\le \eta}_\varepsilon(T)
:=\varepsilon\int_0^T\int_{|\xi|\le \eta R_\varepsilon} m(\xi)\,|\widehat{u^\varepsilon}(\xi,t)|^2\,\dd \xi\,\dd t.
\]
Then there exists $C>0$, independent of $\varepsilon$, $\eta$, and $T$, such that
\begin{equation}\label{eq:low-freq-defect}
D^{\le \eta}_\varepsilon(T)
\le C\eta^{2\alpha-2}\norm{u_0}_{L^2}^2.
\end{equation}
Consequently, if along some sequence $\varepsilon_n\downarrow0$ one has a positive dissipation defect
\begin{equation}\label{eq:defect-positive}
\limsup_{n\to\infty}\varepsilon_n\int_0^T\ip{M u^{\varepsilon_n}(t)}{u^{\varepsilon_n}(t)}\,\dd t\ge \delta>0,
\end{equation}
then for every $\eta>0$ sufficiently small,
\begin{equation}\label{eq:crossover-concentration}
\limsup_{n\to\infty}
\varepsilon_n\int_0^T\int_{|\xi|\ge \eta R_{\varepsilon_n}} m(\xi)\,|\widehat{u^{\varepsilon_n}}(\xi,t)|^2\,\dd \xi\,\dd t
\ge \frac{\delta}{2}.
\end{equation}
Thus any positive dissipation defect necessarily concentrates at frequencies comparable to or larger than the crossover scale.
\end{proposition}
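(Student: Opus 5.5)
The low-frequency bound \eqref{eq:low-freq-defect} comes from comparing $\varepsilon m(\xi)$ with the classical viscous weight $\nu|\xi|^2$ on the ball $|\xi|\le \eta R_\varepsilon$. The $\varepsilon$-uniform Leray bound in \cref{prop:eps-energy} then controls the viscous term.

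\textbf{Step 1 (pointwise comparison).} By the upper bound in \eqref{eq:symbol-growth}, $m(\xi)\le c_1(1+|\xi|^{2\alpha})$. Split the ball into $|\xi|\le 1$ and $1\le|\xi|\le \eta R_\varepsilon$.

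On the outer shell, $1+|\xi|^{2\alpha}\le 2|\xi|^{2\alpha}=2|\xi|^{2\alpha-2}|\xi|^2$. Since $|\xi|^{2\alpha-2}\le \eta^{2\alpha-2}R_\varepsilon^{2\alpha-2}=\eta^{2\alpha-2}\nu/\varepsilon$ there, we get
\[
\varepsilon m(\xi)\le 2c_1\eta^{2\alpha-2}\,\nu|\xi|^2 .
\]

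On the unit ball, $\varepsilon m(\xi)\le 2c_1\varepsilon$. This piece is not of the form $\eta^{2\alpha-2}\nu|\xi|^2$, and handling it is the main obstacle. I would try two routes.

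\textbf{Step 2 (integration against the energy inequality).} From Step 1,
\[
D^{\le\eta}_\varepsilon(T)\le 2c_1\eta^{2\alpha-2}\int_0^T\nu\norm{\nabla u^\varepsilon}_{L^2}^2\dd t + 2c_1\varepsilon\int_0^T\int_{|\xi|\le1}|\widehat{u^\varepsilon}|^2\dd\xi\dd t .
\]
By \eqref{eq:eps-energy-int}, the first term is at most $c_1\eta^{2\alpha-2}\norm{u_0}_{L^2}^2$. The second term is at most $2c_1\varepsilon T\norm{u_0}_{L^2}^2$. It is negligible as $\varepsilon\downarrow0$ but not uniform in $T$ as the statement demands.

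For the unit-ball piece I would try one of the following.
\begin{enumerate}[label=(\alph*)]
\item Use the lower bound $c_0|\xi|^{2\alpha}\le m$ together with the interpretation that only the growth $m\sim|\xi|^{2\alpha}$ matters. In the model case $m\le c_1|\xi|^{2\alpha}$ the comparison holds on the whole ball, giving the clean constant $C=c_1$.
\item Observe that when $\eta R_\varepsilon\le 1$ the region is just the unit ball. I would then either assume $\eta R_\varepsilon\ge1$ or accept a bound of the form $C\eta^{2\alpha-2}\norm{u_0}_{L^2}^2+C\varepsilon T\norm{u_0}_{L^2}^2$.
\end{enumerate}
I expect the intended proof uses the homogeneous upper bound. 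I would state that modification explicitly, or carry the harmless $O(\varepsilon T)$ remainder, which vanishes in the limit used below.

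\textbf{Step 3 (concentration consequence).} Fix $T$. Split the total weighted dissipation $\varepsilon_n\int_0^T\ip{Mu^{\varepsilon_n}}{u^{\varepsilon_n}}\dd t$ by Plancherel into the low-frequency part $D^{\le\eta}_{\varepsilon_n}(T)$ and the high-frequency part over $|\xi|\ge\eta R_{\varepsilon_n}$. Choose $\eta$ so small that $C\eta^{2\alpha-2}\norm{u_0}_{L^2}^2\le\delta/2$; any remainder term $O(\varepsilon_nT)$ tends to zero. Taking $\limsup$ in the identity high $=$ total $-$ low and using \eqref{eq:defect-positive} then gives \eqref{eq:crossover-concentration}.
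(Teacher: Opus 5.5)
Your route is the paper's route: compare $\varepsilon m(\xi)$ pointwise with $\eta^{2\alpha-2}\nu|\xi|^2$ on $\{|\xi|\le \eta R_\varepsilon\}$, integrate against the $\varepsilon$-uniform Leray bound \eqref{eq:eps-energy-int}, then split the total weighted dissipation into low and high frequencies. The one real difference is the point you flagged, and you are right about it. The paper writes ``by the upper growth bound in \eqref{eq:symbol-growth}, $m(\xi)\le C|\xi|^{2\alpha}$'', but this does not follow from $m\le c_1(1+|\xi|^{2\alpha})$ near $\xi=0$.

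This is more than a gap in the proof. Under \eqref{eq:symbol-growth} alone, the $\eta$- and $T$-uniform bound \eqref{eq:low-freq-defect} is false. Take $m=c_1(1+|\xi|^{2\alpha})$ and small divergence-free data with $\hat u_0$ supported in $|\xi|\le\rho$, where $\rho$ is small enough depending on $\varepsilon$ and $\eta$. To leading order in the amplitude, $D^{\le\eta}_\varepsilon(T)\approx \tfrac12\bigl(1-e^{-2c_1\varepsilon T}\bigr)\norm{u_0}_{L^2}^2$, which does not shrink as $\eta\to0$.

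So under the paper's hypothesis, the correct statement is the one you derive:
\[
D^{\le\eta}_\varepsilon(T)\le c_1\eta^{2\alpha-2}\norm{u_0}_{L^2}^2+2c_1\varepsilon T\norm{u_0}_{L^2}^2 .
\]
The clean form \eqref{eq:low-freq-defect} needs the extra homogeneous bound $m(\xi)\le c_1|\xi|^{2\alpha}$, as for $m=\mu|\xi|^{2\alpha}$ or the Riesz-kernel example. With that bound the comparison works on the whole ball.

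Your Step~3 is correct. With $T$ fixed and $\varepsilon_n\downarrow0$, the remainder $2c_1\varepsilon_n T\norm{u_0}_{L^2}^2$ drops out of the $\limsup$. Hence \eqref{eq:crossover-concentration} holds exactly as stated for every $\eta$ with $c_1\eta^{2\alpha-2}\norm{u_0}_{L^2}^2\le\delta/2$, so the proposition's main conclusion survives.

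One thing in your option (b) should be dropped: assuming $\eta R_\varepsilon\ge1$ does not help. The unit-ball contribution is still present in that case, and $\eta R_{\varepsilon_n}\ge1$ holds automatically for large $n$ anyway. The actual fix is to carry the $O(\varepsilon T)$ term, or to strengthen the hypothesis to the homogeneous upper bound.
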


\begin{proof}
By the upper growth bound in \eqref{eq:symbol-growth},
\[
m(\xi)\le C|\xi|^{2\alpha}.
\]
If $|\xi|\le \eta R_\varepsilon$, then
\[
\varepsilon m(\xi)
\le C\varepsilon |\xi|^{2\alpha-2}|\xi|^2
\le C\varepsilon (\eta R_\varepsilon)^{2\alpha-2}|\xi|^2
= C\eta^{2\alpha-2}\nu |\xi|^2.
\]
Therefore
\[
D^{\le \eta}_\varepsilon(T)
\le C\eta^{2\alpha-2}\nu \int_0^T\int_{\R^3}|\xi|^2|\widehat{u^\varepsilon}(\xi,t)|^2\,\dd\xi\,\dd t
= C\eta^{2\alpha-2}\nu\int_0^T\norm{\nabla u^\varepsilon(t)}_{L^2}^2\,\dd t.
\]
Now apply the uniform energy estimate \eqref{eq:eps-energy-int} to obtain \eqref{eq:low-freq-defect}.

To prove \eqref{eq:crossover-concentration}, decompose the defect into low and high frequencies:
\[
\varepsilon_n\int_0^T\ip{M u^{\varepsilon_n}(t)}{u^{\varepsilon_n}(t)}\,\dd t
= D^{\le \eta}_{\varepsilon_n}(T)
+\varepsilon_n\int_0^T\int_{|\xi|\ge \eta R_{\varepsilon_n}} m(\xi)|\widehat{u^{\varepsilon_n}}(\xi,t)|^2\,\dd\xi\,\dd t.
\]
Choose $\eta>0$ so small that $C\eta^{2\alpha-2}\norm{u_0}_{L^2}^2\le \delta/2$. Then \eqref{eq:low-freq-defect} shows that the low-frequency contribution is at most $\delta/2$, while \eqref{eq:defect-positive} guarantees a total contribution of size at least $\delta$ along a subsequence. The remainder must therefore satisfy \eqref{eq:crossover-concentration}.
\end{proof}

\begin{remark}[What has and has not been proved]\label{rem:rigid-concentration}
\Cref{thm:continuation-blowup,prop:defect-crossover} identify two necessary signatures of a hypothetical finite-time singularity of the classical flow:
\begin{enumerate}[label=(\roman*)]
\item every continuation norm must diverge along the regularized family as $\varepsilon\downarrow0$;
\item if a positive hyperdissipative defect survives, then a nontrivial portion of that defect must sit at frequencies $|\xi|\gtrsim R_\varepsilon$.
\end{enumerate}
What remains open is the stronger annular localization
\[
|\xi|\approx R_\varepsilon,
\]
as opposed to the weaker one-sided statement $|\xi|\gtrsim R_\varepsilon$ proved above. Such an annular localization would amount to a genuine rigidity theorem for the concentration pattern. At present the estimates of this paper do not control the far-ultraviolet tail well enough to force that sharper conclusion.
\end{remark}

\begin{remark}[Possible rescaled profiles and the counterexample question]\label{rem:selfsimilar-open}
Suppose one could find points $(x_\varepsilon,t_\varepsilon)$ with $t_\varepsilon\uparrow T_*$ and define the crossover length
\[
\ell_\varepsilon:=R_\varepsilon^{-1}=\Bigl(\frac{\varepsilon}{\nu}\Bigr)^{\frac{1}{2\alpha-2}}.
\]
The corresponding blow-up rescaling is
\[
v^\varepsilon(y,\tau):=\ell_\varepsilon u^\varepsilon(x_\varepsilon+\ell_\varepsilon y,
 t_\varepsilon+\ell_\varepsilon^2\tau).
\]
At this scale the Laplacian and hyperdissipative terms are formally of the same order, so any nontrivial limit profile would solve a balanced equation containing both $\Delta$ and $M$. This is precisely why the crossover scale is the natural place to search for a blow-up profile.

However, the present paper does \emph{not} prove the existence of such a profile, nor does it construct a counterexample through truncated or regularized solutions. To do so one would need, in addition to the necessary conditions above, a compactness theory for the rescaled family strong enough to pass to a nontrivial limit and a rigidity argument to show that the limit is singular for the classical Navier--Stokes dynamics. The obstruction highlighted here should therefore be read as a guide to where a counterexample would have to concentrate, not as a counterexample construction itself; compare Tao's averaged blow-up model and the numerical borderline investigations of Guillod and \v Sver\'ak discussed later in this section.
\end{remark}

\subsection{Why Galerkin truncation does not close the large-data problem}

The finite-dimensional system \eqref{eq:epsN-main} is globally smooth for every fixed $(\varepsilon,N)$. However, this global solvability is not the same as a global theorem for the full equation.

\begin{proposition}[What survives in the limits]\label{prop:limit-compactness}
Let $u_0\in L^2_\sigma(\R^3)$. For each fixed $\varepsilon>0$, the family $u^{\varepsilon,N}$ is bounded uniformly in $N$ in the natural energy class
\[
L^\infty(0,T;L^2_\sigma(\R^3))\cap L^2(0,T;\dot H^1_\sigma(\R^3))
\cap L^2(0,T;\Dom(M^{1/2})).
\]
Hence, after passing to a subsequence in $N$, one recovers a Leray--Hopf weak solution of \eqref{eq:eps-main}. If one now lets $\varepsilon\downarrow0$, the bounds remain sufficient to extract a further subsequence converging to a Leray--Hopf weak solution of the classical three-dimensional Navier--Stokes equations. What is missing is any $\varepsilon$-independent strong bound that would justify passage to the limit in a class of smooth solutions.
\end{proposition}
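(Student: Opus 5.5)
The argument is a two-stage Leray--Hopf compactness scheme. The only inputs are the truncated energy identity of \cref{prop:eps-energy} and the compactness machinery already used in \cref{thm:weak}. First fix $\varepsilon>0$. The solution $u^{\varepsilon,N}$ of \eqref{eq:epsN-main} is global, since the truncated system is an ODE on the range of $P_{\le N}$. Pairing with $u^{\varepsilon,N}$ gives the energy identity, because $P_{\le N}$ is selfadjoint and commutes with $\Pcal$ and $M$, so the nonlinear term vanishes. Integrating in time yields
\[
\sup_{0\le t\le T}\norm{u^{\varepsilon,N}(t)}_{L^2}^2+2\nu\int_0^T\norm{\nabla u^{\varepsilon,N}}_{L^2}^2\dd t+2\varepsilon\int_0^T\norm{M^{1/2}u^{\varepsilon,N}}_{L^2}^2\dd t\le\norm{P_{\le N}u_0}_{L^2}^2\le\norm{u_0}_{L^2}^2 .
\]
This is the claimed uniform bound in $N$ in the stated energy class. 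The constant $\varepsilon^{-1}$ in front of the $\Dom(M^{1/2})$ part is harmless at this stage because $\varepsilon$ is fixed.

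Next I would bound the time derivative uniformly. Write $\partial_t u^{\varepsilon,N}=\nu\Delta u^{\varepsilon,N}-\varepsilon Mu^{\varepsilon,N}-P_{\le N}\Pcal\diver(u^{\varepsilon,N}\otimes u^{\varepsilon,N})$. The Laplacian term lies in $L^2(0,T;H^{-1})$. The $M$ term lies in $L^2(0,T;H^{-\alpha})$ after using $\langle \varepsilon M u,\varphi\rangle\le\varepsilon\norm{M^{1/2}u}\norm{M^{1/2}\varphi}$ together with the upper bound $m\le c_1(1+|\xi|^{2\alpha})$. For the nonlinear term, interpolation gives $u\otimes u\in L^{4/3}(0,T;L^{3/2})\cap L^1(0,T;L^3)$ uniformly, hence $\diver(u\otimes u)\in L^{4/3}(0,T;H^{-k})$ for $k$ large. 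The operators $P_{\le N}$ and $\Pcal$ are bounded uniformly on $H^{-k}$. Aubin--Lions on balls, combined with a diagonal argument, then gives strong convergence in $L^2_{\mathrm{loc}}((0,T)\times\R^3)$ along a subsequence. This is enough to pass to the limit in the quadratic term against compactly supported divergence-free test functions. The factor $P_{\le N}$ causes no trouble, since $P_{\le N}\varphi\to\varphi$ in every $H^k$, so $\langle P_{\le N}\Pcal\diver(u\otimes u),\varphi\rangle=\langle \Pcal\diver(u\otimes u),P_{\le N}\varphi\rangle$. The energy inequality for the limit $u^\varepsilon$ follows from weak lower semicontinuity of each term, together with $\norm{P_{\le N}u_0}\to\norm{u_0}$. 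This produces a Leray--Hopf solution of \eqref{eq:eps-main}.

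Then let $\varepsilon\downarrow0$ along a sequence. The same energy inequality holds for $u^\varepsilon$ with $\varepsilon$-independent bounds in $L^\infty L^2\cap L^2\dot H^1$. The $\Dom(M^{1/2})$ control degenerates, but only with weight $\varepsilon$, so the term $\varepsilon\langle M^{1/2}u^\varepsilon,M^{1/2}\varphi\rangle$ tends to zero. Indeed it is at most $\varepsilon^{1/2}\bigl(\varepsilon\int\norm{M^{1/2}u^\varepsilon}^2\bigr)^{1/2}\norm{M^{1/2}\varphi}_{L^2_tL^2_x}=O(\varepsilon^{1/2})$ for smooth $\varphi$. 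The time-derivative bound also becomes uniform: the $M$ contribution is bounded in $L^2(0,T;H^{-\alpha})$ by the same Cauchy--Schwarz argument. Hence the identical Aubin--Lions and diagonal argument yields a subsequence converging to a weak solution of the classical system. This limit satisfies the Leray energy inequality because the dropped term $2\varepsilon\int\langle Mu^\varepsilon,u^\varepsilon\rangle$ is nonnegative.

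The step requiring the most care is the time-derivative bound in the $\varepsilon\downarrow0$ passage. It must be uniform in both parameters, which means treating $\varepsilon M u^\varepsilon$ via $\varepsilon^{1/2}\cdot(\varepsilon^{1/2}M^{1/2}u^\varepsilon)$ instead of through coercivity. The final sentence of the statement, that no $\varepsilon$-independent strong bound is available, is not a claim to be proved. I would justify it by pointing to \cref{prop:nonuniform-Hs}, where the $H^s$ continuation constant blows up like $\varepsilon^{-\theta/(1-\theta)}$, and to \cref{thm:continuation-blowup}, which shows such bounds are impossible if the classical flow is singular.
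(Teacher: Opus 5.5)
Your proposal is correct and follows the same route as the paper. The uniform-in-$N$ bound comes from \cref{prop:eps-energy}, compactness is the Aubin--Lions argument of \cref{thm:weak}, a second extraction as $\varepsilon\downarrow0$ uses the $\varepsilon$-independent Leray bounds, and the final, interpretive sentence is referred to \cref{prop:nonuniform-Hs}. Your version is also more careful than the paper's sketch in two places. You handle the non-compactly supported test function $P_{\le N}\varphi$ through $P_{\le N}\varphi\to\varphi$ in $H^k$. You also show that $\varepsilon\langle M^{1/2}u^\varepsilon,M^{1/2}\varphi\rangle=O(\varepsilon^{1/2})$ in the weak formulation, whereas the nonnegativity of $\varepsilon M$ that the paper invokes only matters for the energy inequality.
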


\begin{proof}
The uniform bound in $N$ follows from \cref{prop:eps-energy}. Standard compactness arguments then yield a weak solution of \eqref{eq:eps-main}, exactly as in the proof of \cref{thm:weak}. The passage $\varepsilon\downarrow0$ is based on the same energy estimates, now noting that the $\varepsilon M$ term is nonnegative and therefore harmless in the weak formulation. One obtains weak compactness in the Leray class and hence a Leray--Hopf weak solution of the classical Navier--Stokes equations.

The final sentence follows from \cref{prop:nonuniform-Hs}: the arguments available at fixed $\varepsilon$ do not produce any scale-critical or high-order estimate that is independent of $\varepsilon$. Therefore the limit theory is weak, not strong.
\end{proof}

This proposition pinpoints the exact role of truncation in the present context. Galerkin truncation is indispensable for constructing approximate solutions and for numerical experiments, but its global existence is ``cheap'' in the sense that it reflects finite-dimensional ODE theory together with the energy law. It does not, by itself, reveal whether the full infinite-dimensional Navier--Stokes flow can concentrate enough energy at ever smaller scales to blow up in finite time.

\subsection{Relation to the literature: progress, barriers, and possible blow-up scenarios}

The deterministic large-data picture is by now sharply stratified.

\begin{enumerate}[label=(\alph*)]
\item At and above the Lions exponent $\alpha\ge \frac54$, global regularity is classical; this is the regime recovered in \cref{thm:lions} and goes back to Lions.
\item Tao showed that one can go slightly below the exact threshold if the symbol loses only a logarithmic amount of dissipation, thereby proving a logarithmically supercritical global result. This is important because it shows that the pure power-law threshold is not the end of the story, although the gain is delicate and still stronger than the Laplacian at high frequencies.
\item Colombo and Haffter established a below-critical global regularity result for the hyperdissipative Navier--Stokes equation, revealing that additional structure can push the theory beyond the naive threshold in certain regimes.
\item Gruji\'c and Xu developed a geometric and sparsity-based near-critical program for hyperdissipative flows, and Farhat--Gruji\'c subsequently ruled out a class of analytic blow-up scenarios for every exponent $\beta>1$ under focused analyticity hypotheses.
\item On the negative side, Tao's averaged Navier--Stokes blow-up construction demonstrates that energy identities and broad harmonic-analysis estimates alone do not capture enough of the specific Navier--Stokes structure to exclude singularity formation in averaged models. More recently, C\'ordoba, Mart\'inez-Zoroa, and Zheng constructed finite-time singularities for forced hypodissipative Navier--Stokes equations for sufficiently small dissipation exponent. These results do not settle the classical problem, but they show that weakening dissipation or averaging away structure can fundamentally change the dynamics.
\end{enumerate}

The lesson for the vanishing-hyperdissipation approximation is therefore subtle. On one hand, the family \eqref{eq:eps-main} is mathematically meaningful, physically interpretable, and globally regular for every fixed $\varepsilon>0$ in the Lions regime by \cref{thm:lions}. On the other hand, the degeneration shown in \cref{prop:nonuniform-Hs,prop:limit-compactness} leaves open the possibility that as $\varepsilon\downarrow0$ the solution sequence develops finer and finer structures at frequencies near or below the crossover scale \eqref{eq:crossover}. This is one of the natural places where a genuine large-data obstruction could hide.

\begin{remark}[Does the degeneration signal a counterexample?]\label{rem:counterexample}
The loss of uniform control as $\varepsilon\downarrow0$ should be interpreted as an \emph{obstruction marker}, not as a proof of blow-up. It says that the hyperdissipative approximation, by itself, does not rule out concentration of activity at scales that drift to infinity with $\varepsilon^{-1/(2\alpha-2)}$. This is compatible with a hypothetical blow-up mechanism for the classical three-dimensional Navier--Stokes equations, but it does not construct one.

In particular, a sequence of globally smooth truncated solutions $u^{\varepsilon,N}$ cannot by itself serve as a counterexample-producing machine. To build an actual singular solution of the classical equation by such an approximation, one would need to show at least the following three things simultaneously:
\begin{enumerate}[label=(\roman*)]
\item a precise concentration mechanism for the approximate solutions at scales linked to \eqref{eq:crossover};
\item a limiting procedure that preserves this concentration as $N\to\infty$ and $\varepsilon\downarrow0$, rather than dissipating it through weak compactness;
\item a proof that the limit is a genuine solution of the classical Navier--Stokes equations and that the concentration survives as a true singularity rather than a loss of compactness artifact.
\end{enumerate}
None of these steps is presently available in the deterministic finite-energy setting. Thus the approximation family studied here is best viewed as a rigorous probe of the borderline between regularized and unresolved dynamics, not as a completed route to a blow-up construction.
\end{remark}

\begin{remark}[Numerical diagnostics]
Although the approximation scheme does not produce a theorem for the classical problem, it is well suited for numerical diagnostics. One may track how the enstrophy production, high-frequency energy flux, and local-in-space concentration indices depend on the pair $(\varepsilon,\alpha)$ and compare them with the crossover scale \eqref{eq:crossover}. Such experiments can test whether the global regularity mechanism in the regularized problem is dominated by the ultraviolet dissipation or by a more distributed structural effect. They may also help separate physically meaningful stabilization from purely numerical hyperviscous damping; compare the discussion in \cref{sec:applications}.
\end{remark}

\section{Applications, variable-viscosity interpretation, and numerical perspectives}\label{sec:applications}

This section explains how the abstract symbol hypothesis arises in concrete models and how the growth rate $2\alpha$ influences spectral damping.

\subsection{Convolution of second derivatives}

The first and most direct application is the one already suggested by \cref{prop:criterion}. Let
\begin{equation}\label{eq:kernel-model}
L u = \sum_{j=1}^3 c_j * \partial_{x_jx_j}u
\end{equation}
with scalar kernels $c_j$ on $\R^3$. Then
\[
\widehat{Lu}(\xi)
=
-
\left(
\sum_{j=1}^3 \xi_j^2 \hat c_j(\xi)
\right)\hat u(\xi).
\]
Hence the corresponding positive symbol is
\[
m(\xi)=\sum_{j=1}^3 \xi_j^2 \Re \hat c_j(\xi).
\]
If
\[
c_0 |\xi|^{2\alpha}
\le
\sum_{j=1}^3 \xi_j^2 \Re \hat c_j(\xi)
\le
c_1 (1+|\xi|^{2\alpha}),
\]
then $m\in\mathfrak M_\alpha$ and all results of the paper apply.

A canonical choice is the Riesz-type kernel $K_\alpha$ whose Fourier transform satisfies
\[
\widehat{K_\alpha}(\xi)=|\xi|^{2\alpha-2},
\qquad 1<\alpha<2.
\]
Then
\[
L u
=
K_\alpha * \Delta u
\]
has symbol $-|\xi|^{2\alpha}$ and coincides with $-(-\Delta)^\alpha u$.

\subsection{Variable viscosity and effective constitutive laws}

Although the classical incompressible Navier--Stokes equations use a constant Newtonian viscosity, many realistic flows have temperature-, density-, or composition-dependent viscosity. Examples include strongly heated channel flows, supercritical-pressure flows, lubrication layers, polymer processing, and geophysical fluids. Mathematical well-posedness issues for variable-viscosity incompressible systems have been studied, for example, by Xu, Li, and Zhai \cite{XuLiZhai2016}. In the fluid-mechanics literature, direct numerical simulation and modeling studies show that variable-property effects can substantially modify near-wall turbulence and transport \cite{ZontaMarchioliSoldati2012,PecnikPatel2017}.

The model treated here should not be interpreted as an exact first-principles derivation of every such phenomenon. Rather, it provides a clean \emph{effective} framework. When a variable-viscosity stress law is filtered, homogenized, or linearized around a reference state, the unresolved or eliminated degrees of freedom can produce a scale-dependent closure whose leading Fourier symbol is stronger than $|\xi|^2$. In that situation the extra term $L$ represents a nonlocal constitutive correction to the Newtonian stress. The analysis in the present paper then answers a very concrete question: does the effective closure merely perturb the classical viscous scale, or does it introduce genuinely stronger high-frequency damping?

\subsection{Hyperviscosity and turbulence modeling}

Hyperviscous models are also ubiquitous in turbulence computation. In large-eddy simulation and spectral turbulence studies, they are used to push dissipation toward the smallest resolved scales, thereby enlarging the apparent inertial range. Reviews of scale-invariant turbulence closures and subgrid modeling may be found in \cite{MeneveauKatz2000}. At the same time, the physics literature shows that hyperviscosity can create bottleneck effects, distort the dissipation range, and partially thermalize the high-frequency spectrum \cite{HaugenBrandenburg2004,FrischEtAl2008,AgrawalAlexakisBrachetTuckerman2020}. This is precisely why a rigorous mathematical distinction between \emph{regularizing} and \emph{physically faithful} is important: the same term that helps regularity may also change the spectral cascade in a way that is undesirable for quantitative turbulence prediction.

\subsection{Spectral illustrations near the critical growth rate}

To visualize how the order $2\alpha$ changes the damping mechanism, we plot two linear spectral quantities. The first is the rate
\[
\lambda_\alpha(k)=\nu k^2 + \mu k^{2\alpha},
\]
which governs the decay of a Fourier mode of wavenumber $k$ in the linearized problem. The second is the decay factor
\[
E_\alpha(t;k_0)=\exp\bigl(-2(\nu k_0^2+\mu k_0^{2\alpha})t\bigr)
\]
for a fixed representative mode $k_0$.

These figures are not direct numerical simulations of the nonlinear PDE. They are analytic spectral illustrations meant to make the threshold mechanism visible. In particular, they show how the hyperdissipative component begins to dominate the Laplacian at high frequencies and how this dominance becomes stronger as $\alpha$ crosses the critical value $\frac54$.

\begin{figure}[ht]
\centering
\includegraphics[width=0.78\textwidth]{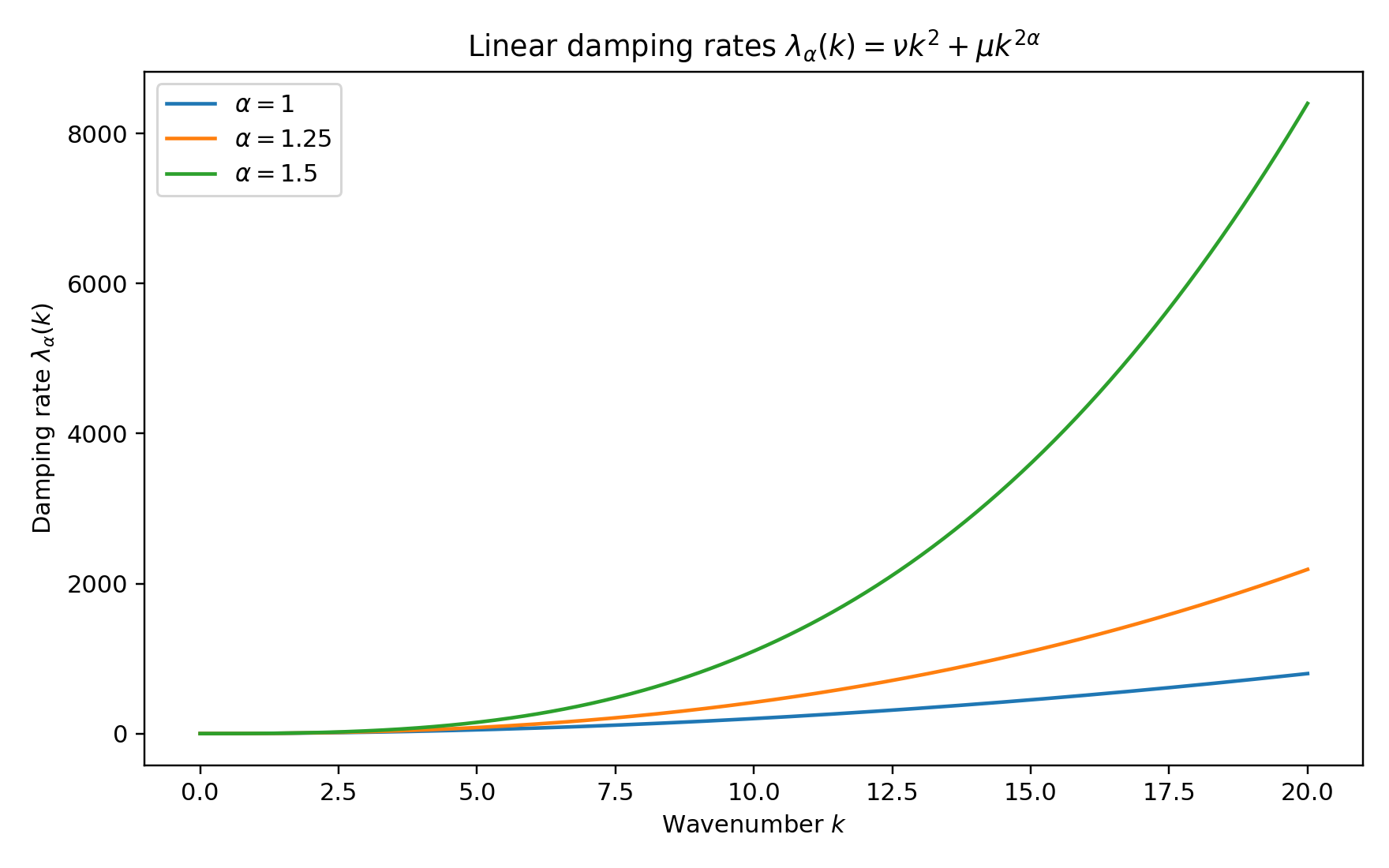}
\caption{Linear damping rate $\lambda_\alpha(k)=\nu k^2+\mu k^{2\alpha}$ for $\alpha=1$, $\alpha=\frac54$, and $\alpha=\frac32$. The separation between the curves increases rapidly at high wavenumbers, reflecting the growing strength of the nonlocal dissipation.}
\label{fig:damping}
\end{figure}

\begin{figure}[ht]
\centering
\includegraphics[width=0.78\textwidth]{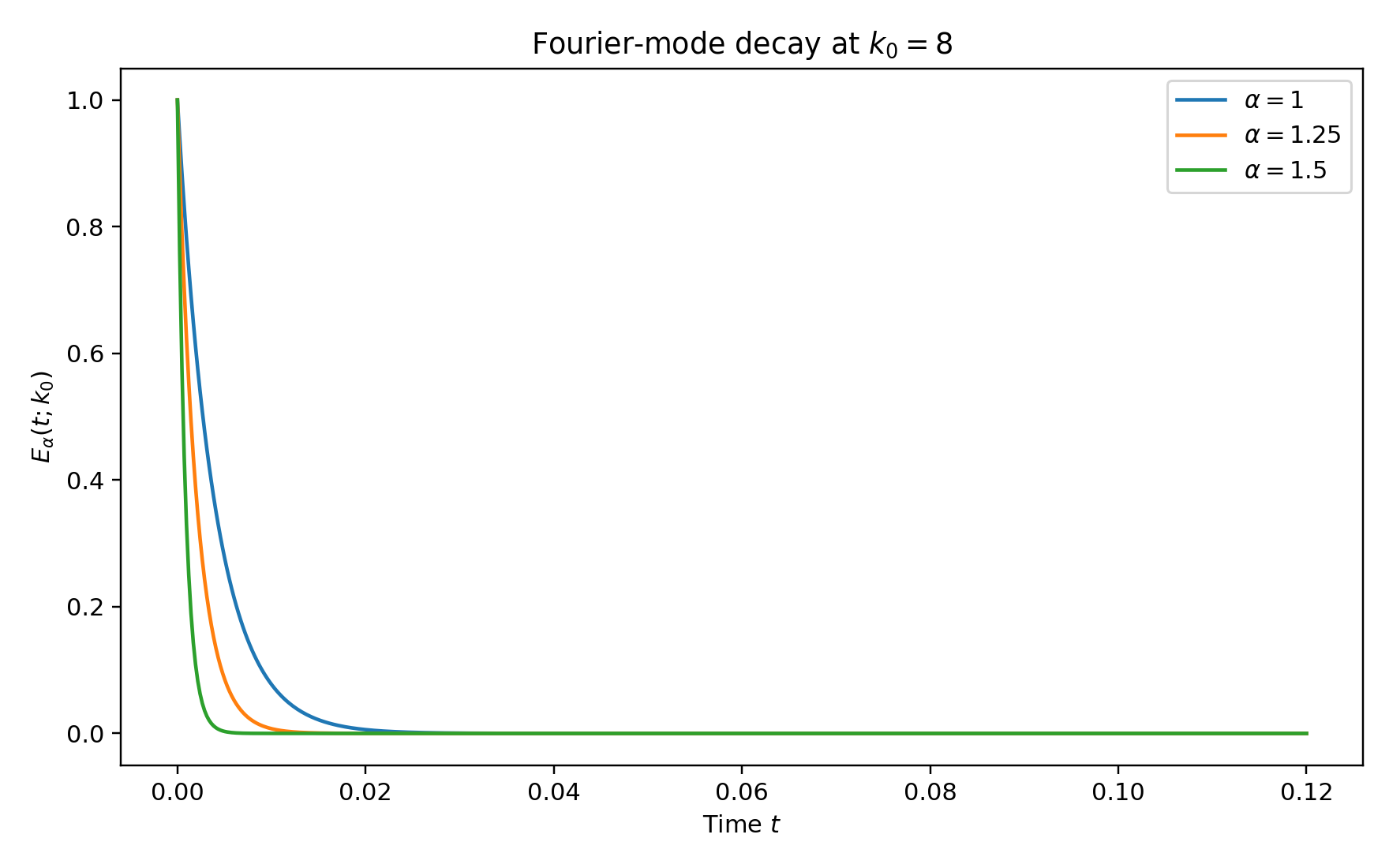}
\caption{Decay of a representative Fourier mode with $k_0=8$ under the linearized evolution. Larger $\alpha$ produces noticeably faster high-frequency damping.}
\label{fig:decay}
\end{figure}

\subsection{What remains open below the Lions threshold}

The interval
\[
1<\alpha<\frac54
\]
is still supercritical with respect to the finite-energy scaling, as shown in \cref{prop:scaling}. Our results give global weak solutions, local strong well-posedness, and a global small-data theorem in this range, but they do not settle the large-data global regularity question. This is exactly where the modified equation remains closest in spirit to the Millennium problem. Recent work of Gruji\'c and Xu \cite{GrujicXu2024,GrujicXu2025} emphasizes the continuing significance of this supercritical range, while the non-uniqueness results of Li, Qu, Zeng, and Zhang \cite{LiQuZengZhang2024} show that even in highly dissipative regimes one must distinguish very carefully between physically relevant solution classes and scaling-supercritical ones.

For future numerical work, the present paper suggests a natural agenda:
\begin{enumerate}[label=(\arabic*)]
\item compare Fourier-Galerkin or pseudospectral computations for $\alpha=1$, $\alpha=\frac54$, and $\alpha>\frac54$ under the same initial data and forcing;
\item examine how the bottleneck region and dissipation spectrum vary with the growth exponent $2\alpha$;
\item test nonlocal kernels $c_j$ whose symbols interpolate between bounded, first-order, Laplacian-order, and hyperdissipative behavior;
\item investigate whether effective variable-viscosity closures extracted from data produce symbols closer to the harmless or genuinely regularizing side of \cref{prop:criterion}.
\end{enumerate}
This numerical program is especially attractive because it can be carried out without changing the incompressibility structure or the spatial domain of the classical problem.

\section{Concluding remarks}

We have studied a class of nonlocal perturbations of the three-dimensional incompressible Navier--Stokes equations in which the only modification of the classical system is a selfadjoint dissipative Fourier multiplier. The analysis isolates the Fourier-symbol mechanism that matters for regularity: bounded or first-order convolution corrections are analytically lower order, whereas symbols comparable to $|\xi|^{2\alpha}$ with $\alpha>1$ produce a genuinely hyperdissipative regime.

The principal mathematical conclusions are as follows.
\begin{enumerate}[label=(\roman*)]
\item The exact $L^2$ energy identity persists for the full nonlocal class considered here.
\item Global weak solutions exist for arbitrary finite-energy divergence-free data whenever $\alpha>1$.
\item Local strong well-posedness holds in $H^s(\R^3)$ for every $s>\frac52$.
\item The critical threshold for arbitrary-data global strong solvability remains the Lions exponent $\alpha=\frac54$.
\item In the vanishing-hyperdissipation limit, the fixed-parameter global theory degenerates in a rigid way near a hypothetical singular time of the classical flow.
\end{enumerate}

From the point of view of the large-data three-dimensional problem, the last item is the structural conclusion of the paper. The hyperdissipative approximation provides a globally regular family for each fixed parameter, but the estimates responsible for that regularity are not uniform as the parameter tends to zero. Thus the approximation does not resolve the classical Navier--Stokes problem; rather, it identifies the precise analytic locus at which uniform control fails.

The class studied here also covers natural nonlocal viscosity laws of convolution type and includes the standard fractional Laplacian models as special cases. In that sense, the paper provides a common Hilbert-space framework for three issues that are often discussed separately: symbol-level classification of nonlocal dissipation, global well-posedness in the hyperdissipative regime, and the loss of uniformity in the Navier--Stokes limit.

Two natural directions remain open: sharpening the vanishing-parameter analysis in scale-critical continuation classes, and strengthening the crossover-frequency concentration obtained here beyond the one-sided estimates proved in this paper.

\section*{Conflict of interest}
The authors declare that they have no conflict of interest.

\section*{Data availability}
No datasets were generated or analyzed during the current study.

\section*{Author contributions}
Both authors contributed to the conception of the work, the mathematical analysis, the writing of the manuscript, and the approval of the final version.

\end{document}